\newcommand{\mcC}{{\mathcal C}}  
\newcommand{\mA}{{\mathcal A}}
\newcommand{\mcL}{{\mathcal L}}
\newcommand{\UU}{{\mathcal U}}
\newcommand{\DD}{{\mathcal D}}
\newcommand{\lieg}{{\mathfrak g}}
\newcommand{\lieu}{{\mathfrak u}}
\newcommand{\NN}{{\mathbb N}}
\newcommand{\Fq}{{\mathbb F}}
\DeclareMathOperator{\Sym}{Sym}
\DeclareMathOperator{\vect}{vect}
\DeclareMathOperator{\abs}{abs}
\DeclareMathOperator{\ad}{ad}
\DeclareMathOperator{\Stab}{Stab}
\DeclareMathOperator{\diag}{diag}
\DeclareMathOperator{\intg}{int}
\DeclareMathOperator{\SU}{SU}
\DeclareMathOperator{\PGL}{PGL}
\DeclareMathOperator{\GL}{GL}
\DeclareMathOperator{\SL}{SL}
\DeclareMathOperator{\PSL}{PSL}
\DeclareMathOperator{\Gal}{Gal}
\DeclareMathOperator{\Aut}{Aut}
\DeclareMathOperator{\Ad}{Ad}
\DeclareMathOperator{\im}{im}
\DeclareMathOperator{\Fix}{Fix}
\DeclareMathOperator{\Hom}{Hom}
\DeclareMathOperator{\Lie}{Lie}
\DeclareMathOperator{\conv}{conv}
\DeclareMathOperator{\chark}{char}
\DeclareMathOperator{\SO}{SO}
\newtheorem{theorem}{Theorem}[section]
\newtheorem{lemma}[theorem]{Lemma}
\newtheorem{proposition}[theorem]{Proposition}
\newtheorem{corollary}[theorem]{Corollary}
\newtheorem{definition}[theorem]{Definition}
\theoremstyle{definition}
\newtheorem{example}[theorem]{Example}
\theoremstyle{remark}
\newtheorem{remark}[theorem]{Remark}%
\newcommand{\ZZ}{\mathbbm{Z}}    %
\newcommand{\QQ}{\mathbbm{Q}}    %
\newcommand{\RR}{\mathbbm{R}}    %
\newcommand{\GG}{\mathcal{G}}
\renewcommand{\phi}{\varphi}
\renewcommand{\epsilon}{\varepsilon}
\title{The isomorphism problem for almost split Kac--Moody groups}
\author{Guntram Hainke}
\email{guntram.hainke@math.uni-giessen.de}
\keywords{Kac--Moody group, isomorphism problem, almost split, twin root datum, twin building}
\subjclass[2000]{17B40, 20E36, 20G15, 20G44, 51E24}
\begin{document}

\begin{abstract}
In this paper, which is based on a part of the author's Ph.D. thesis \cite{HainkeThesis}, 
we consider the isomorphism problem for almost split Kac--Moody groups, which have been constructed by R\'emy via Galois descent from split Kac--Moody groups as defined by Tits. We show that under certain technical assumptions, any isomorphism between two such groups must preserve the canonical subgroup structure, i.e.\ the twin root datum associated to these groups, which generalizes results of Caprace in the split case. \\
An important technical tool we use is the existence of maximal split subgroups inside almost split Kac--Moody groups, which generalizes the corresponding result of Borel--Tits for reductive algebraic groups. 
\end{abstract}

\maketitle

\section{Introduction}
Let $k$ be an algebraically closed field and $G$ a reductive algebraic group over $k$. Then $G$ is uniquely determined by its Lie algebra (which in turn is determined by a unique classical Cartan matrix $A=(a_{ij})$), the character group of a maximal torus $\Lambda\cong \ZZ^n$, and the set of its roots $c_i \in \Lambda$ and co-roots $h_i \in \Lambda^\vee$ which satisfy $h_i(c_j)=a_{ij}$. \\ 
Conversely, given a datum $\mathcal D$ consisting of a generalized Cartan matrix $A=(a_{ij})$ (which uniquely determines a Kac--Moody algebra $\mathfrak g$), a free $\ZZ$-module $\Lambda$ and elements $c_i \in \Lambda, h_i \in \Lambda \check{}$ which satisfy $h_i(c_j)=a_{ij}$, Tits \cite{Tits2} associates a group functor $\mathcal G_\mathcal D$ on the category of commutative rings. For a field $k$, the value $\mathcal G:=G_\mathcal D(k)$ is called a split Kac--Moody group over $k$. When $A$ is classical, $G$ is a split Chevalley group over $k$, which justifies regarding general Kac--Moody groups as infinite-dimensional Chevalley groups.\\
For an algebraic group $G$ defined over a field $k$, the group of rational points $G(k)$ coincides with the fixed point set of the action of $\Gal(E|k)$ on $G(E)$, where $E|k$ is a Galois extension over which $G$ splits. Using this method of Galois descent, R\'emy \cite{Remy} started the theory of rational points of Kac--Moody groups. In this context, an almost split Kac--Moody group over a field $k$ can be thought of as an infinite-dimensional generalization of the group of $k$-rational points of a $k$-isotropic algebraic group defined over $k$. \\
An important structural feature of a split or almost split Kac--Moody group $G$ is the existence of certain subgroups which form a twin root datum for $G$. In the classical context, this has been used in the fundamental work by Borel--Tits \cite{BorelTits} to describe abstract homomorphisms between isotropic algebraic groups over infinite fields. Caprace \cite{PECAbstract} could provide a similar description for isomorphisms of split Kac--Moody groups.\\
In this paper, we investigate the isomorphism problem for almost split Kac--Moody groups over fields of characteric 0. Our main result is that any such isomorphism must be standard.

\begin{theorem} Let $G,G'$ be two almost split 2-spherical Kac--Moody groups over fields $k,k'$ of characteristic 0. Then any isomorphism $\varphi\colon G \to G'$ is standard, i.e. induces an isomorphism of the corresponding twin root data.  
\end{theorem}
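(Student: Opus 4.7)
The plan is to reduce the problem to the already-established split case (Caprace) by transferring the isomorphism to a maximal split subgroup and then descending the result back to the ambient almost split groups. Accordingly, I would proceed in three phases.

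\textbf{First phase: transport maximal split subgroups.} Let $H \leq G$ and $H' \leq G'$ be maximal split Kac--Moody subgroups; their existence is the main technical tool announced in the abstract. Because the property of being a maximal split Kac--Moody subgroup is (or, one shows, can be rephrased as) a purely group-theoretic one — for example, as a maximal subgroup admitting a twin BN-pair structure of split Kac--Moody type — the image $\varphi(H)$ is again a maximal split Kac--Moody subgroup of $G'$. Invoking conjugacy of maximal split subgroups in $G'$ (the analogue of the Borel--Tits conjugacy theorem for maximal split tori in isotropic reductive groups), we adjust $\varphi$ by an inner automorphism of $G'$ so that $\varphi(H) = H'$.

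\textbf{Second phase: apply Caprace's theorem.} The restriction $\varphi|_H \colon H \to H'$ is now an isomorphism between two split $2$-spherical Kac--Moody groups over the respective splitting fields (which have characteristic $0$ since $k$, $k'$ do). By Caprace's theorem, $\varphi|_H$ is standard: it carries the absolute twin root datum of $H$ to that of $H'$, inducing in particular a matching of absolute root groups, an isomorphism of the splitting fields, and a morphism on the Dynkin-diagram side, all up to diagonal and sign twists.

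\textbf{Third phase: descend to the relative twin root data.} The relative root groups of $G$ are recovered inside $H$ as Galois fixed-point objects — concretely, as subgroups of the form $U_a = \bigl(\prod_{\tilde a} U_{\tilde a}\bigr) \cap G$, where $\tilde a$ runs through the Galois orbit of absolute roots lying over the relative root $a$; analogously for $G'$. Since $\varphi|_H$ preserves both the collection of absolute root groups and the subgroup $G \cap H = H$, it must permute these Galois orbits and map relative root groups of $G$ to relative root groups of $G'$. Combined with Caprace's identification of the field and Dynkin data at the absolute level, this produces the required isomorphism of twin root data of $G$ and $G'$.

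\textbf{Main obstacle.} The delicate point is the third phase, i.e.\ showing that the abstract isomorphism $\varphi|_H$ is necessarily compatible with Galois descent. A priori nothing constrains $\varphi$ to respect the Galois module structure on the absolute root system, and one must exploit that the relative subgroup $G \subseteq H$ (equivalently, the Galois fixed locus) is preserved under $\varphi$. Here the characteristic-zero hypothesis and $2$-sphericity enter crucially: they rigidify unipotent and Levi pieces enough that purely group-theoretic incidences — normalizers, commutator relations among root groups, and the rank-$2$ subsystems used by Caprace — force the relative root group structure to be recognized from $G$ alone, and hence to be preserved by $\varphi$.
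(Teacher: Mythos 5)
Your reduction does not work as written, and the gap is already in the first phase. Nothing in the hypotheses lets you claim that $\varphi(H)$ is again a maximal split Kac--Moody subgroup: being ``split of Kac--Moody type'' is not known to be an abstract group-theoretic invariant of a subgroup, and establishing that an abstract isomorphism recognizes such structure is essentially the content of the theorem you are trying to prove, so the argument is circular at this point. Worse, the conjugacy statement you invoke is false: maximal split subgroups are highly non-unique up to conjugacy. This is exactly Example \ref{chainorthogonalgroups} of the paper, where inside $\SO(q)$ the split subgroups $\SO(\langle 1,-1,a_3\rangle)$ and $\SO(\langle 1,-1,a_i\rangle)$ both contain the maximal split torus but are not conjugate over $k$ when $a_3a_i^{-1}\notin (k^\times)^2$; the construction of the maximal split subgroup (Theorem \ref{maximalsplitsubgroup}) depends on an arbitrary choice of one-dimensional subgroups $E_\alpha\leq \mathscr Z(V_\alpha)$, and different choices are in general not conjugate. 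So the step ``adjust $\varphi$ by an inner automorphism so that $\varphi(H)=H'$'' cannot be carried out even if $\varphi(H)$ were known to be maximal split.

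The third phase also rests on a misidentification. The relative root groups $V_\alpha$ of $G(k)$ are \emph{not} contained in the maximal split subgroup $F(k)$: one only has $F(k)\cap V_\alpha\cong (k,+)$, a single one-parameter subgroup, while $V_\alpha$ is in general higher-dimensional and possibly metabelian (e.g. for $\SU_3(k[t,t^{-1}])$). The Galois fixed-point description $V_a=\langle U_b: b\in\Delta_a\rangle(k)$ refers to the inclusion $G(k)\leq G(E)$ into the ambient split group over the splitting field, not to $F(k)\leq G(k)$; there is no Galois action on $F(k)$ recovering the $V_\alpha$. Hence even granting your first two phases, standardness of $\varphi|_H$ in the sense of Caprace says nothing about where $\varphi$ sends the bulk of each $V_\alpha$, and that is precisely the hard part. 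The paper uses the maximal split subgroup only to manufacture a regular diagonalizable subgroup $S(\QQ)\leq T_d(k)$ whose image is again regular diagonalizable (via the characterization of regularity by non-centralization of $\SL_2(\QQ)$, Lemma \ref{characterizationregular}), then produces Zariski-dense subgroups $S_1,S_2$ exchanged into the anisotropic kernels, and pins each $\varphi(U_\alpha)$ into a single root group $V_\beta$ by unipotence and torus-character arguments (Jacobson--Morozov, Lie--Kolchin, nibbling sequences, Proposition \ref{TorusActsOnRootGroups}); some argument of this kind is unavoidable and is missing from your proposal.
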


A crucial feature in the proof is to show the existence of a maximal split Kac--Moody subgroup, which gives an inclusion of twin buildings which might be of independent interest. 

\begin{theorem} Let $G$ be an almost split 2-spherical Kac--Moody group over a field $k$ of characteristic 0, let $(H,(U_\alpha)_{\alpha \in \Phi})$ denote its canonical twin root datum and let $S\leq H$ be a maximal split torus. Then there exists a maximal split Kac--Moody group $F \leq G$ such that $F \cap H=S$ and $F \cap U_\alpha=(k,+)$ for all $\alpha \in \Phi$.  
\end{theorem}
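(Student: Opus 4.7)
The plan is to construct $F$ by imitating the Borel--Tits construction of maximal split subgroups of reductive groups inside the rank-$2$ Levi-type subsystem subgroups of $G$, and then to amalgamate these local pieces via the Curtis--Tits-style presentation available in the $2$-spherical setting. For each relative simple root $\alpha_i$ let $L_i = \langle H, U_{\pm\alpha_i}\rangle$ and for each pair $i \neq j$ let $L_{ij} = \langle H, U_{\pm\alpha_i}, U_{\pm\alpha_j}\rangle$. The $2$-sphericity hypothesis guarantees that each $L_{ij}$ has finite Weyl group, so it is an isotropic reductive algebraic $k$-group in the classical Borel--Tits setting, and each $L_i$ is a $k$-rank-$1$ reductive group.

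First I would invoke Borel--Tits in each $L_{ij}$ to produce a maximal $k$-split connected reductive subgroup $F_{ij} \leq L_{ij}$ whose maximal torus is $S \cap L_{ij}$ and whose root groups relative to $S$ are one-parameter additive $k$-subgroups $x_\alpha^{(ij)}\colon(k,+)\hookrightarrow U_\alpha$ indexed by the relative roots of $L_{ij}$. The next, and most delicate, step is to make these local choices cohere: for a fixed simple relative root $\alpha_i$ the homomorphism $x_{\alpha_i}^{(ij)}$ produced inside $L_{ij}$ must agree (up to an admissible normalization of the torus) with the one produced inside $L_{ij'}$. The obstruction to coherence lies in a Galois cocycle with values in the anisotropic kernel of $L_i$; since $\charF k = 0$, this group is the group of $k$-points of an anisotropic reductive $k$-group, and a Hilbert~90 / Shapiro's lemma argument, together with the ability to rescale $x_{\alpha_i}$ by elements of $S(k)$ normalizing it, allows the cocycle to be trivialized, giving a globally consistent family $x_{\alpha_i}\colon (k,+)\hookrightarrow U_{\alpha_i}$ for each $i$.

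Once the rank-$1$ subgroups $F_i=\langle S, x_{\pm\alpha_i}(k)\rangle$ are fixed coherently, the $F_{ij}$ are determined and share the prescribed rank-$1$ pieces pairwise. By the Curtis--Tits theorem for $2$-spherical Kac--Moody groups (Tits, Abramenko--M\"uhlherr), the amalgamated sum of the $F_{ij}$ along the $F_i$ is canonically isomorphic to the split Kac--Moody group $\mathcal G_{\mathcal D_{\mathrm{rel}}}(k)$ attached to the relative Kac--Moody datum $\mathcal D_{\mathrm{rel}}$ extracted from the $\ast$-action of $\Gal(\bar k | k)$ on the absolute datum of $G$; this gives a canonical homomorphism from a split Kac--Moody group over $k$ to $G$, whose image $F$ is the desired subgroup. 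Maximality of $F$ among split Kac--Moody subgroups follows because $S$ is already a maximal split torus of $G$, so $F$ cannot be enlarged without increasing the split rank. The identities $F \cap H = S$ and $F \cap U_\alpha = x_\alpha(k)$ are then obtained from the twin BN-pair decomposition of $G$: any element of $F\cap H$ lies in the torus of the split twin root datum of $F$, which is $S$; any element of $F \cap U_\alpha$ is conjugate via $F$ into $F_i$ and then the Borel--Tits description of root groups in $L_i$ forces it to lie in $x_\alpha(k)$. The principal obstacle is the cocycle-vanishing step trivializing the rank-$1$ choices globally; this is where both the $2$-sphericity and the characteristic $0$ assumption are essential, as they ensure that the obstruction lives in a cohomology group that vanishes.
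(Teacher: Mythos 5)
Your starting point --- applying Borel--Tits's Theorem 7.2 inside the rank-$2$ subgroups $L_{ij}$, which the $2$-sphericity hypothesis makes into genuine isotropic reductive $k$-groups --- is exactly the paper's first move. But the step you identify as the crux, trivializing a Galois cocycle so that the rank-$1$ data produced in $L_{ij}$ and $L_{ij'}$ agree, is both a self-inflicted problem and one your proposed fix does not actually solve. Borel--Tits's theorem takes the one-parameter subgroups $E_{\alpha}\leq U_{\alpha}$ as \emph{input} (any $k$-subgroup normalized by $S$ and $k$-isomorphic to ${\bf G}_a$, e.g.\ a line in $\mathscr Z(U_{\alpha})$) and returns the \emph{unique} split reductive subgroup containing $S\cdot\langle E_{\alpha},E_{\beta}\rangle$. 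If you fix the $E_{\alpha_i}$ once and for all and feed the same pair into each $L_{ij}$, there is nothing to glue and no cocycle appears. Conversely, if a cocycle valued in the anisotropic kernel of $L_i$ did appear, a Hilbert~90 / Shapiro argument would not dispose of it: $H^1(k,Z)$ for an anisotropic reductive $k$-group $Z$ has no reason to vanish in characteristic $0$ (think of anisotropic tori). The characteristic-$0$ hypothesis is in fact not where you place it; the construction of $F$ in the paper works over any infinite field.

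The deeper gap is that the conclusion you actually need --- $F\cap H=S$ and $F\cap U_\alpha=E_\alpha\cong(k,+)$ for \emph{every} root $\alpha$, not just the simple ones --- is asserted rather than proved. Your justification (``any element of $F\cap U_\alpha$ is conjugate via $F$ into $F_i$'') presupposes that $F$ admits a Bruhat decomposition $F=VMV$ compatible with that of $G$, which is precisely the statement that has to be established; and the Curtis--Tits amalgamation only presents the abstract amalgam, leaving open whether the natural map to $G$ is injective and how its image meets $H$ and the $U_\alpha$. The paper's route is purely combinatorial and self-contained at this point: it isolates axioms (RSD~1)--(RSD~5) for the data $(S,(E_{\alpha})_{\alpha\in\Pi})$, verifies them via the rank-$2$ Borel--Tits subgroups (with (RSD~5), a decomposition property for $T_d$-invariant subgroups of $U_{(\alpha,\beta]}$, obtained from Zariski density of $S(k)$ in $S$), and then proves by induction on word length that $F=VMV=\bigcup_{w}VwV$, from which $F\cap U_\gamma=E_\gamma$ follows by conjugating into a simple root group and using $V=E_\gamma\ltimes E_\gamma'$ together with $U_+\cap U_-=1$. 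Without an argument of this kind your amalgamation step produces a group mapping to $G$, but not the intersection identities that constitute the theorem.
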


The paper is structured as follows. In section 2, we give a general introduction to groups endowed with a twin root datum and their corresponding twin buildings. In the next section, we recall the definition of almost split Kac--Moody groups and investigate the associated twin root datum more closely. In section 4, we investigate maximally split subgroups. 
Finally, the last section deals with the isomorphism problem.\\ 

\textbf{Acknowledgement}. I am grateful to Pierre-Emmanuel Caprace, Bernhard M\"uhlherr and Katrin Tent for useful discussions on the subject of this paper. I would like to thank Brian Conrad for providing a proof of Proposition \ref{existenceofsl2}.

\section{Groups with a twin root datum}
\subsection{Basics}
Let $(W,S)$ be a Coxeter system with associated length function $l$ and $\Phi=\Phi^+ \cup \Phi^-$ the set of its roots. A set of roots $\Psi \subseteq \Phi$ is called {\bf prenilpotent} if there are elements $w,w' \in W$ such that $w\cdot \Psi \subseteq \Phi^+$ and $w'\cdot \Psi \subseteq \Phi^-$. For a prenilpotent pair of roots $\{\alpha, \beta\}$ the {\bf closed root interval} $[\alpha,\beta]$ is defined as
$$[\alpha, \beta]:=\{\gamma \in \Phi: \gamma \supset \alpha \cap \beta \text{ and } -\gamma \supset (-\alpha) \cap (-\beta)\}.$$
Let $[\alpha, \beta):=[\alpha,\beta]\backslash\{\beta\}, 
(\alpha, \beta]:=[\alpha,\beta]\backslash\{\alpha\}$ and
$(\alpha,\beta):=[\alpha,\beta]\backslash\{\alpha,\beta\}.$  

\begin{definition}
 Let $(W,S)$ be a Coxeter system and let $\Phi$ be the set of its roots. Let $G$ be a group and let $(U_\alpha)_{\alpha \in \Phi}$ be a family of non-trivial subgroups. Let $H\leq \cap_{\alpha \in \Phi}N_G(U_\alpha)$ and set $U_+:=\langle U_\alpha: \alpha>0 \rangle$, $U_-:=\langle U_\alpha: \alpha<0\rangle.$ Then $(H,(U_\alpha)_{\alpha \in \Phi})$ is said to be a {\bf twin root datum} for $G$ (of type $(W,S)$) if the following conditions are satisfied:
\begin{enumerate}[(TRD 1)]
 \item $G=H\langle U_\alpha: \alpha \in \Phi \rangle$.
\item For each prenilpotent pair of roots $\{\alpha, \beta\}$, the commutator subgroup $[U_\alpha, U_\beta]$ is contained in 
$U_{(\alpha, \beta)}:=\langle U_\gamma: \gamma \in (\alpha, \beta) \rangle.$
\item For each $s \in S$ and each $u \in U_{\alpha_s} \backslash \{1\},$ there exist $u',u'' \in U_{-\alpha_s}$ such that
$m(u):=u'uu''$ conjugates $U_\beta$ onto $U_{s\beta}$ for all $\beta \in \Phi.$\\
Moreover, for all $u, v \in U_{\alpha_s}\backslash \{1\},m(u)H=m(v)H.$
\item For all $s \in S$, $U_{\alpha_s} \not\subseteq U_-$ and $U_{-\alpha_s}\not\subseteq U_+.$
\end{enumerate}
\end{definition}

\begin{remark} 
This definition is due to Tits \cite{Tits1} who used it to axiomatize features of split Kac--Moody groups. Other examples of groups endowed with a twin root datum (which are also called groups of Kac--Moody type) include $k$-isotropic reductive algebraic $k$-groups, split Chevalley groups and certain ''exotic groups'' \cite{remy2006topological}.
\end{remark}

\begin{definition} \label{standardisom}
Let $G, G'$ be two groups endowed with twin root data $(H,(U_\alpha)_{\alpha \in \Phi(W,S)})$ and $(H', (U'_\beta)_{\beta \in \Phi(W',S')})$. An isomorphism $\varphi\colon G \to G'$ is said to be {\bf standard} (or to {\bf preserve root groups}) if there exists $g \in G'$ such that 
$$ \{\varphi(U_\alpha): \alpha \in \Phi\}=\{gU'_\beta g^{-1}: \beta \in \Phi'\}.$$
\end{definition}

\begin{remark} 
\begin{enumerate}
\item 
If $\varphi$ is standard, it follows that $\varphi(H)=gH'g^{-1}$ and that the corresponding Coxeter groups are isomorphic \cite[Theorem 2.5]{MR2180452}.
\item An important intermediate step when analyzing arbitrary isomorphisms between groups of Kac--Moody type is to show that such an isomorphism is standard. Indeed, in the setting of \cite{BorelTits} it can be first shown that an isomorphism is standard which then allows to conclude that it is ''rational-by-field''. A similar factorization holds for standard isomorphism between two split Kac--Moody groups by \cite{PECAbstract}.  
\end{enumerate}
\end{remark} 

\subsection{Group combinatorics and twin buildings.} Let $G$ be a group endowed with a twin root datum $(H,(U_\alpha)_{\alpha \in \Phi(W,S)})$ of type $(W,S).$ 
For $B_\pm:=HU_\pm$, $N:=H\langle m(u): u \in U_{\alpha_s}\rangle$ and $S$ a set of representatives for the reflections with respect to the simple roots, $(B_+,B_-,N,S)$ is a {\bf twin BN-pair} (see \cite[Definition 6.78]{AbramenkoBrown}) for $G$. \\
In particular, let $\Delta_\pm:=G/B_\pm$ and $\Delta:=(\Delta_+,\Delta_-).$ Then $\Delta$ is a {\bf twin building} of type $(W,S)$ (see \cite[Definition 5.133]{AbramenkoBrown}) and there is a {\bf Bruhat decomposition} of $G$
\index{Bruhat decomposition}
\index{twin!BN-pair}
\index{twin!building}
\index{Birkhoff decomposition}
$$G=\bigcup _{w \in W} B_+ W B_+=\bigcup _{w \in W} B_- W B_-$$
and a {\bf Birkhoff decomposition}
$$G=\bigcup _{w \in W} B_+ W B_-=\bigcup _{w \in W} B_- W B_+.$$

\index{twin!apartment}
A {\bf twin apartment} $\mA=(\mA_+,\mA_-)$ is a subset of $\Delta$ which is isometric to the thin twin building of type $(W,S)$ (see  \cite[Definition 5.171]{AbramenkoBrown}).\\

\index{parabolic subgroup}
A subgroup $P\leq G$ containing a conjugate of $B_\epsilon$ is called a {\bf parabolic subgroup} of sign $\epsilon.$ If $P$ contains $B_\epsilon$, there is a set $J \subseteq S$ such that $P=B_\epsilon W_J B_\epsilon$, where $W_J:=\langle s_i: i \in J \rangle \leq W.$ 
If $W_J$ is finite, $W_J$ (or $J$) is called {\bf spherical.}\\
Let $J \subseteq S.$ Then $L^J:=H \langle U_\alpha: \alpha \in \Phi(W_J,J)\rangle$ is called a {\bf Levi factor}.
\index{Levi factor}

\subsection{Geometric realizations} 
One of the equivalent ways to define a building is to view it as a simplicial complex covered by subcomplexes (the apartments) which are isomorphic to the standard Coxeter complex. We briefly recall two important geometric realizations of this simplicial complex. A very good exposition of the interplay of these two constructions can be found in \cite[Appendix B.4]{KrammerPhD}.\\

{\bf The CAT(0) realization.}
Let $W=\langle (s_i)_{i \in I}: (s_is_j)^{m_{ij}}=1 \rangle$ be a Coxeter group. Let $A:=(-\cos (\frac {\pi} {m_{ij}}))_{i,j}.$ 
Let $V=\bigoplus_{i \in I} \RR e_i$ and let $B_I$ denote the bilinear form induced by $A$, i.e.\ $B_I(e_i,e_j):=a_{ij}.$ Then the representation
$$\rho \colon W \to \GL(V), \rho(s_i)(e_j):=e_j-2 B(e_i,e_j)e_i$$
is called the {\bf standard linear representation} of $W$, which can be shown to be faithful.\\
For a subset $J\subseteq I$, let $V_J:=\bigoplus_{i \in J} \RR e_i$ and write $B_J$ for the restriction of $B_I$ to $V_J.$
For each $J \subseteq S$ such that $W_J$ is spherical let $S_J:=\{x \in V_J: x_i \geq 0, B_J(x,x)=1\}.$ Let $C$ be the intersection of the cone generated by these spherical cells with the half spaces $B_I(e_i,-)\leq 1.$ Then $C$ serves as the model of a chamber. \\
For a building $\Delta$ of type $(W,S)$, this gives a geometric realization of $\Delta$ via the mirror construction (see e.g.\ \cite[Section  4.2.1]{Remy}). Moussong proved that the realization of an apartment in this realization has a natural metric which makes it a CAT(0) space. More precisely, the realization is a CAT(0) polyhedral complex with finitely many shapes of cells. By using retractions, Davis proved that the geometric realization of the entire building is CAT(0).\\
A point in the CAT(0) realization corresponds to a spherical residue of $\Delta.$ If $\Delta=\Delta(G)$ is the building associated to a group $G$ endowed with a BN-pair, then $G$ acts on the CAT(0) realization of $\Delta$ via isometries.\\

{\bf The cone realization.} Again let $(W,S)$ be a Coxeter group and let $\rho\colon W \to \GL(V)$ denote the standard linear representation. A {\bf root} is a vector of the form $a=we_i$ for some $w \in W$ and some standard basis vector $e_i$; let $\Phi=\Phi_+ \cup \Phi_-$ denote the set of all roots. A root $a$ is often identified with the {\bf half-space} 
$$D_a:=\{f \in V^*: f(a)\geq 0\} \subseteq V^*$$ it determines. \index{fundamental chamber}
\index{simple root}
\index{Tits cone} 
\index{half-space}

Let $\overline{C}:=\{f \in V^*: f(e_i) \geq 0 \text{ for all } i \in I\}$ be the so-called {\bf fundamental chamber} and let $\overline{F}_{s_i}:=\{f \in V^*: f(e_i) =0\}$ denote the {\bf wall} associated to the simple root $e_i.$ For an arbitrary root $a$ let $\partial a:=\{f \in V^*: f(a)=0\}$ denote the wall of $a.$\\
Let $W$ act on $V^*$ in the contragredient way, i.e.\ $(w\cdot f)(v):=f(w^{-1}v).$ Then 
$$\mcC:=W\cdot \overline{C}$$ 
is called the {\bf Tits cone} of $W$. It serves as a geometric realization of the Coxeter complex of $W$. \\
Let $\Delta$ be a building of type $(W,S)$, viewed as a discrete set with a $W$-valued metric $\delta$.
Consider the topological space $\Delta_{cone}:=\Delta \times \overline{C}/\sim$, where two points $(c,x),(d,y)$ are identified if and only if $x=y$ and $\delta(c,d) \in W_{J(x)}.$ Here $J(x):=\{s_i \in S: x \in \overline{F}_{s_i}\}$ is the {\bf type} of $x$.\\ 
For a twin building $\Delta=(\Delta_+,\Delta_-)$ the {\bf cone realization} of $\Delta$ is defined as the link of $\Delta_+$ and $\Delta_-$ with the origin of both realizations identified:
$$\Delta_{cone}:=\Delta_+ *\Delta_-/\sim.$$
If $\mA$ is a twin apartment of $\Delta$, it turns out that its geometric realization in $\Delta_{cone}$ is homeomorphic to the realization $\mA'$ of the thin twin building of type $(W,S)$, which can be viewed as two copies of the Tits cone: $\mA'\cong \mathcal C \cup -\mathcal C \subseteq V^*.$ \\
Note that if $W$ is spherical, $\mathcal C=V^*$, while if $W$ is infinite the Tits cone $\mathcal C$ is contained in a half-space. In both cases $\mA=\mathcal C \cup -\mathcal C$ makes good sense.\\
 
Let $\mA$ be a twin apartment of $\Delta$ and let $\Omega \subseteq \Delta$ be a set which is contained in $\mA.$ Identifying $\mA$ with 
$\mathcal{C} \cup -\mathcal{C}$, the {\bf convex hull} of $\Omega$, $\conv_\mA(\Omega)$ is defined as the convex hull of $\Omega$ in $\mA$, and its {\bf vectorial extension}, $\vect_\mA(\Omega)$ as the vector subspace spanned by $\Omega.$
\index{convex hull}
\index{vectorial extension}
\index{generic subspace}
The set $\Omega$ is said to be {\bf generic} if it is, viewed as a subset of $\mathcal{C} \cup -\mathcal{C}$, the intersection of $\mathcal{C} \cup -\mathcal{C}$ with a subspace $L$ of $V^*$ which meets the interior of $\mathcal{C}$: $\Omega=L\cap (\mathcal{C} \cup -\mathcal{C}).$\\
A subset $\Omega \subseteq \Delta_{cone}$ which is contained in a twin apartment $\mA=(\mA_+,\mA_-)$ is called {\bf balanced} if 
$\Omega \cap \mA_+ \neq \emptyset \neq \Omega \cap \mA_-$ and $\Omega$ is contained in the union of a finite number of spherical facets. Here a {\bf spherical facet} $F$ is defined as 
$$F=w\cdot \left(\bigcap_{i \in J} \partial e_i \cap \bigcap_{i \in I \backslash J} D_{e_i}\right)$$ for some $w \in W$ and some spherical subset $J\subseteq I.$ \\
Two points $x,y$ of the cone realization of a twin building are {\bf (geometrically) opposite} if there is a twin apartment $\mA\cong \mathcal C \cup -\mathcal C \subseteq V^*$ containing $x$ and $y$ such that in this identification, $x=-y.$
\index{balanced subset}

\index{facet}
\index{opposite points}

\section{Split and almost split Kac--Moody groups}
In this section we recall the definition of split and almost split Kac--Moody groups and some of their important features. 
\subsection{Kac--Moody algebras}
Let $I$ be a finite index set, $n:=|I|$ and let $A=(a_{ij})_{i,j \in I} \in \ZZ^{n \times n}$ be a {\bf generalized Cartan matrix}, i.e.\ $a_{ii}=2$ for all $i \in I$, 
$a_{ij}\leq 0$ for $i \neq j$ and $a_{ij}=0 \Leftrightarrow a_{ji}=0.$

Let $\Lambda$ be a free $\ZZ$-module of finite rank and denote by $\Lambda^\vee:=\Hom(\Lambda, \ZZ)$ its dual.
For $i \in I$, let $c_i \in \Lambda$ and $h_i \in \Lambda^\vee$  be such that $h_i(c_j)=a_{ij}.$
Then $\DD=(I,A, \Lambda, (c_i)_{i \in I}, (h_i)_{i \in I})$ is called a {\bf Kac--Moody root datum.}\\
The set $\Pi:=\{c_i: i \in I\}$ is called the {\bf base} and the set $\Pi^\vee:=\{h_i: i \in I\}$ the {\bf cobase} of the root datum $\DD$.\\

\index{root datum!simply connected}
\index{root datum!minimal adjoint}
Let $A$ be a generalized Cartan matrix. Two Kac--Moody root data involving $A$ are given by the following two examples.\\
The {\bf simply connected root datum} $\DD^A_{sc}$ associated to $A$ is given by $\Lambda:=\bigoplus_{i \in I} \ZZ e_i$, $c_i:=\sum_{j \in I} a_{ji} e_j$ and $h_i:=e_i^\vee$, where $(e_i^\vee)_{i \in I}$ is the dual basis of $(e_i)_{i \in I}$.\\
The {\bf minimal adjoint root datum} $\DD^A_{min}$ is given by $\Lambda:=\bigoplus_{i \in I} \ZZ e_i$, $c_i:=e_i$ and $h_i:=\sum_{j \in I} a_{ij}e_i^\vee.$ \\
In general, though, neither will the family $(c_i)_{i \in I}$ be free nor generate $\Lambda$. Since for a root datum $\DD=(I,A,\Lambda,(c_i)_{i \in I}, (h_i)_{i \in I})$ its {\bf dual} 
$\DD^t:=(I,A^t, \Lambda^\vee,(h_i)_{i \in I},(c_i)_{i \in I})$ is again a root datum, a similar statement holds for the family $(h_i)_{i \in I}.$\\

\index{Kac--Moody!algebra}
Let $K$ be a field of characteristic 0 and let $\DD$ be a Kac--Moody root datum. The {\bf Kac--Moody algebra} $\lieg=\lieg_\DD$ of type $\DD$ over $K$ is the Lie algebra generated by $\lieg_0:=\Lambda^\vee\otimes_\ZZ K$ and the symbols $e_i,f_i ~ (i=1, \ldots, n)$ subject to the following relations:
$$ [h,e_i] =h(c_i)e_i, ~ [h,f_i]=-h(c_i)f_i ~ \text{ for } h \in \lieg_0, \quad [\lieg_0,\lieg_0]=0,$$
$$ [e_i,f_i]=-h_i \otimes 1, ~ [e_i,f_j]=0 ~ \text{ for } i\neq j,$$ 
$$ (\ad e_i)^{-a_{ij}+1} e_j=(\ad f_i)^{-a_{ij}+1}f_j=0.$$

{\bf The universal enveloping algebra.}
\index{universal enveloping algebra}
\index{universal enveloping algebra!$\ZZ$-form}
Let $\UU_{\lieg_\DD}$ denote the universal enveloping algebra of $\lieg_\DD$. Let $Q:=\ZZ^n$ with standard basis vectors $v_i$. Then there is a well-defined $Q$-grading of $\UU_{\lieg_\DD}$  by setting $\deg h:=0$ for all $h \in \lieg_0$, $\deg e_i:=-\deg f_i:=v_i$ and extending this. This means that there is a family of subspaces $(V_a)_{a \in Q}$ of $\UU_{\lieg_\DD}$ such that 
$\UU_{\lieg_\DD}=\bigoplus_{a\in Q}V_a$ and for $v_a \in V_a, v_b \in V_b$, $[v_a,v_b] \in V_{a+b}.$ As $\lieg_\DD$ can be identified with a subalgebra of $\UU_{\lieg_\DD}$, there is an induced grading $\lieg_\DD=\bigoplus_{a \in Q} \lieg_a.$ If $a$ is such that $\lieg_a\neq 0$, $a$ is called a {\bf root} and $\lieg_a$ a nontrivial {\bf root space}. \index{root} \index{root space}\\

For $u \in \UU_{\lieg_\DD}$ let $u^{[n]}:=\frac{1}{n!}u^n$ and ${u \choose n}:=\frac{1}{n!}u(u-1)\cdots(u-n+1).$\\
Let $\UU_0$ denote the subring of $\UU_{\lieg_\DD}$ generated by all elements ${h \choose n}$, where $h \in \Lambda^\vee$ and $n \in \NN.$ For $i \in \{1, \ldots, n\}$ let $\UU_i$ resp. $\UU_{-i}$ be the subring $\sum_{n \in \NN}\ZZ e_i^{[n]}$ resp. $\sum_{n \in \NN}\ZZ f_i^{[n]}$. Let $\UU_\DD$ be the subring generated by $\UU_0$ and $\UU_i, \UU_{-i} \, (i=1, \ldots, n).$ \\
It can be shown that $\UU_\DD$ is a $\ZZ$-form of $\UU_{\lieg_\DD}$, i.e.\ the canonical map 
$$\UU_\DD \otimes_\ZZ K \to \UU_{\lieg_\DD}$$ is bijective.\\
For a subring $A$ of $\UU_{\lieg_\DD}$ and a ring $R$ let $A_R:=A\otimes_\ZZ R.$ Then $A_R$ inherits a grading. For $M\subseteq (\UU_\DD)_R$, the {\bf support} of $M$ is the set of degrees which appear when decomposing elements of $M$ into their homogeneous components.\\
\index{support}

\index{Weyl group}
{\bf The Weyl group.} 
From the last two sets of defining relations of $\lieg_\DD$ it follows that $\ad e_i, \ad f_i$ are locally nilpotent derivations of $\lieg.$ Then 
$\exp \ad e_i, \exp \ad f_i$ are well-defined automorphisms of $\lieg$. Let 
$$s_i^*:=\exp \ad e_i \cdot \exp \ad f_i \cdot \exp \ad e_i$$
and let $W^*:=\langle s_i^*: i \in I \rangle \leq \Aut(\lieg).$ 

The {\bf Weyl group} of the generalized Cartan matrix $A$ is defined as
$$W:=W_A:=\langle (s_i)_{i \in I} : (s_is_j)^{m_{ij}}=1 \rangle$$
where $m_{ii}:=1$ and for $i \neq j$, $m_{ij}:=2,3,4,6$ or $\infty$ according to whether $a_{ij}a_{ji}=0,1,2,3$ or $\geq 4$.
The group $W_A$ acts on $Q=\ZZ^n$ via $s_i(v_j):=v_j-a_{ij}v_i.$

The connection between $W^*$ and $W$ is as follows: It can be shown that the assignment $s_i^*\mapsto s_i$ extends to a well-defined surjective homomorphism $\pi\colon W^*\to W.$ The action of $W^*$ permutes the root spaces of $\lieg_\DD$, more precisely, we have $w^*\lieg_a=\lieg_{\pi(w^*)a}.$\\
A root $a$ such that $\lieg_a=w^*\lieg_{\pm v_i}$ is called a {\bf real root}. The set of all real roots is denoted by $\Delta^{re}$. It can be identified with the set of roots $\Phi(W,S)$ of the Coxeter group $W$. 

\subsection{The constructive Tits functor}
\index{Tits functor!constructive}
\index{Tits functor}
Let $\DD=(I,A, \Lambda, (c_i)_{i \in I}, (h_i)_{i \in I})$ be a Kac--Moody root datum with associated Weyl group $W=W_A$ and standard generating set $S$. Let
$R$ be a commutative ring with 1.  For $\alpha \in \Phi=\Phi(W,S)$ let $U_\alpha$ denote a group isomorphic to $(R,+)$ and fix an isomorphism $u_\alpha: (R,+) \to U_\alpha$. Let $\widetilde{\mathcal G_\DD}(R)$ denote the free product of 
$T:=\Hom (\Lambda, R^\times)$ and the free product of all $U_\alpha$, $\alpha \in \Phi$. Then the \textbf{constructive Tits functor} $\mathcal G_\mathcal D(R)$ is defined to be a certain quotient of $\widetilde{\mathcal G_\DD}(R)$ such that the canonical images of $(T,(U_\alpha)_{\alpha \in \Phi(W,S)})$ embed in it and form a twin root datum of type $(W,S)$ for $\mathcal G_\DD(R)$ when $R$ is a field. (See \cite{Tits3} for the precise relations.) In this presentation, the torus acts on the simple root groups $U_{\alpha_i}$ via the root $c_i$: $t\cdot u_{\alpha_i}(r)\cdot t^{-1}=u_{\alpha_i}(t(c_i)\cdot r)$, while
two reflections differ by a co-root: $m(u_{\alpha_i}(r))m(u_{\alpha_i}(1))^{-1}=r\cdot h_i$.

\subsection{The adjoint representation}
For each ring $R$, let $\Aut_{filt}(\UU_\DD)_R$ denote the group of $R$-automorphisms of the $R$-algebra $\UU_\DD \otimes_\ZZ R$ which preserve the filtration (or grading) of $(\UU_\DD)_R$ inherited from $\UU_\DD$ and the ideal $\UU_\DD^+\otimes_\ZZ R.$ Here $\UU_\DD^+$ is the ideal of $\UU_\DD$ generated by $\lieg_\DD \UU_{\lieg_\DD} \cap \UU_\DD.$

\begin{theorem}
Let $R$ be a ring. Then there is a homomorphism
$$\Ad\colon \GG_\DD(R) \to \Aut_{filt}(\UU_\DD)_R$$ 
characterized by the conditions
$$\Ad(u_a(r))=\exp (\ad e_a \otimes r)=\sum_{n\geq 0}\frac{(\ad e_a)^n}{n!}\otimes r^n,$$
$$\Ad(T(R)) \text { fixes } (\UU_0)_R \text{ and } \Ad(h)(e_a\otimes r)=h(c_a)\cdot e_a \otimes r$$

for all $h \in T(R)$, $a \in \Phi$ and $r \in R$.
\end{theorem}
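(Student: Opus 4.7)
The plan is to exploit the presentation of $\GG_\DD(R)$ as a quotient of the free product of $T(R)=\Hom(\Lambda,R^\times)$ and the root groups $U_a$ by the Tits relations recalled in \cite{Tits3}. I will first define candidate images for the generators $u_a(r)$ and $h\in T(R)$ as filtration-preserving $R$-algebra automorphisms of $(\UU_\DD)_R$, then verify that every defining relation is respected; this produces the required homomorphism $\Ad$, and the characterization on generators holds by construction.

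For each real root $a$, fix a root vector $e_a\in\UU_\DD\cap\lieg_a$ obtained from a simple root vector by acting with a preimage in $W^*$ of a Weyl element sending some simple root to $a$. Since $\ad e_a$ is locally nilpotent on $\UU_{\lieg_\DD}$, the formal sum $\sum_{n\ge 0}(\ad e_a)^n/n!\otimes r^n$ terminates on each filtration level of $(\UU_\DD)_R$. The essential input from the definition of $\UU_\DD$ as a $\ZZ$-form is that the divided powers $(\ad e_a)^{[n]}:=(\ad e_a)^n/n!$ already preserve $\UU_\DD$, so after tensoring with $R$ the exponential descends to a well-defined filtration-preserving algebra automorphism. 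For $h\in T(R)$, I define $\Ad(h)$ to be the unique graded $R$-algebra automorphism which fixes $(\UU_0)_R$ and scales each homogeneous component of degree $b=\sum_i n_i v_i\in Q$ by $h(\sum_i n_i c_i)$; this evaluates to $h(c_a)$ on $e_a$ for every real root $a$, matching the stated formula.

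Verification of the relations splits into four parts. Additivity $u_a(r)u_a(s)=u_a(r+s)$ is immediate because $\ad e_a\otimes r$ and $\ad e_a\otimes s$ commute, so their exponentials multiply as expected. Multiplicativity and commutativity of $\Ad$ on $T(R)$ are transparent since each $\Ad(h)$ is diagonal with respect to the $Q$-grading with eigenvalues multiplicative in $h$. The torus-root group relation $h\cdot u_a(r)\cdot h^{-1}=u_a(h(c_a)r)$ is checked by computing that conjugation by $\Ad(h)$ multiplies the degree-$na$ summand of the exponential by $h(c_a)^n$, exactly matching replacement of $r$ by $h(c_a)r$. The $m(u)$ relations (TRD 3) reduce, after substituting $m(u_{a_i}(1))=u_{-a_i}(\pm 1)u_{a_i}(1)u_{-a_i}(\pm 1)$, to the identity $\exp\ad e_i\cdot\exp\ad f_i\cdot\exp\ad e_i=s_i^*$ built into the definition of $W^*$, combined with the equivariance $w^*\lieg_b=\lieg_{\pi(w^*)b}$ already recalled.

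The main obstacle is the commutator relation for a prenilpotent pair of real roots $\{a,b\}$: one must show that $[\Ad(u_a(r)),\Ad(u_b(s))]$ equals the product of $\Ad(u_\gamma(C^{a,b}_\gamma(r,s)))$ over $\gamma\in(a,b)$ with precisely the integer-valued polynomial structure constants $C^{a,b}_\gamma$ that appear in Tits' presentation of $\GG_\DD(R)$. This is the integral Chevalley commutator formula in the Kac--Moody setting. I would prove it by expanding both exponentials, rewriting mixed products $(\ad e_a)^{[m]}(\ad e_b)^{[n]}$ as $\ZZ$-linear combinations of ordered monomials $(\ad e_{\gamma_1})^{[i_1]}\cdots(\ad e_{\gamma_k})^{[i_k]}$ with $\gamma_j\in[a,b]$ (prenilpotence keeps these supports finite), and then invoking that both sets of structure constants are extracted from the same integral structure on $\UU_\DD$, hence coincide. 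Once all four families of relations are verified, the universal property of Tits' presentation yields the homomorphism $\Ad\colon\GG_\DD(R)\to\Aut_{filt}(\UU_\DD)_R$, and the stated characterization holds on generators by construction.
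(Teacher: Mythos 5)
You should first note that the paper itself contains no argument to compare against: its ``proof'' is a one-line citation of \cite[Theorem 9.5.3]{Remy}. Your strategy --- define $\Ad$ on the generators of Tits' presentation of $\GG_\DD(R)$ and verify the defining relations inside $\Aut_{filt}(\UU_\DD)_R$ --- is indeed the standard route, and in outline it is the one followed in R\'emy's book, so the overall shape of the proposal is sound.

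However, as a standalone proof it has a genuine gap precisely at the step you single out as the main obstacle. In Tits' construction the commutator relations for a prenilpotent pair $\{a,b\}$ do not come with independently given closed-form integer polynomials against which you can compare an expansion: the structure constants are themselves \emph{defined} through the $\ZZ$-form $\UU_\DD$ (the groups attached to nilpotent sets of roots are built as group schemes from the integral enveloping algebra). Hence your closing appeal --- ``both sets of structure constants are extracted from the same integral structure, hence coincide'' --- is either circular (it presupposes exactly the compatibility that has to be established at this step) or, if you treat the constants as given data, it requires proving that the mixed divided-power operators $(\ad e_a)^{[m]}(\ad e_b)^{[n]}$ lie in the $\ZZ$-span of ordered divided-power monomials in root vectors attached to $[a,b]$; that is a nontrivial integral PBW-type statement about $\UU_\DD$, not something obtained by merely ``expanding both exponentials''. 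A complete argument must either invoke that integral statement explicitly or follow Tits/R\'emy in defining the constants via the action on $\UU_\DD$, in which case the relation holds by construction and the proof should say so precisely. Two smaller untreated points: you never check that your automorphisms preserve the ideal $\UU_\DD^+\otimes_\ZZ R$, which is part of the definition of $\Aut_{filt}(\UU_\DD)_R$; and your choice of $e_a$ via a preimage in $W^*$ is only well defined up to sign, so you must verify that it agrees with the normalization implicit in the parametrizations $u_a$ of the presentation, since otherwise the asserted formula $\Ad(u_a(r))=\exp(\ad e_a\otimes r)$ could fail by a sign.
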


\begin{proof}
This is Theorem 9.5.3 in \cite{Remy}. 
\end{proof}

\index{Ad!-locally finite}
\index{Ad!-diagonalizable}

The homomorphism $\Ad$ is called the {\bf adjoint representation}.\\

Let $K$ be a field and let $G:=\GG_\DD(K)$ be a split Kac--Moody group.
A subgroup $H\leq G$ is called {\bf Ad-locally finite} if each $v \in (\UU_\DD)_K$ is contained in a finite-dimensional $\Ad H$-invariant subspace. \\
A subgroup $H \leq G$ is called {\bf Ad-diagonalizable} if there is a basis of $(\UU_\DD)_K$ in which the $H$-action is diagonal.\\
From the explicit description of $\Ad$ it follows that $T(K)$ is Ad-diagonalizable.

\subsection{Group combinatorics}
Let $K$ be a field and let $\GG_\DD$ be a Tits functor. Let $G(K):=\GG_\DD(K).$\\
Let $X^*(T)_{\abs}:=\Hom(T, K^\times)$ denote the group of (abstract) characters of $T$ and $X_*(T)_{\abs}:=\Hom(K^\times,T)$. Then $\Lambda$ injects into $X^*(T)_{\abs}$, while $\Lambda^\vee$ injects into $X_*(T)_{\abs}.$ The group $\Lambda$ is called the {\bf group of algebraic characters} of $T$, while the group $\Lambda^\vee$ is called the {\bf group of algebraic cocharacters} of $T$. 
Let $U_+:=\langle U_\alpha: \alpha>0\rangle$ and $U_-:=\langle U_\alpha: \alpha<0\rangle.$ 
Let $B_+:=TU_+$, $B_-:=TU_-.$ Then $B_+$ (resp. $B_-$) is called the {\bf standard positive} (resp. negative) {\bf Borel subgroup}, while any conjugate of $B_+$ resp. $B_-$ is called a positive resp. negative Borel group. For $\epsilon \in \{\pm 1\}$, the group $U_\epsilon$ is called the {\bf unipotent radical} of $B_\epsilon.$ \\
A positive Borel group $B_1$ and a negative Borel subgroup $B_2$ are called {\bf opposite} if their intersection is a Cartan subgroup.\\
In contrast to the theory of algebraic groups, a (positive or negative) Borel subgroup $B$ of a Kac--Moody group in general is not solvable. Indeed, $B$ is solvable if and only if $W$ is finite.

\subsection {Bounded subgroups}
We recall the close connection between Ad-locally finite groups and fixators of balanced subsets. \\
Let $G$ be a group endowed with a twin root datum $(T,(U_\alpha)_{\alpha \in \Phi(W,S)})$. A subgroup $H\leq G$ is called {\bf bounded} if there exists $n \in \NN$ and $w_1, \ldots, w_n \in W$ such that
$$H \subseteq B_+\{w_1, \ldots, w_n\}B_+ \cap B_-\{w_1, \ldots, w_n\}B_-,$$
i.e.\ for $\epsilon \in \{+,-\},$ $H$ is contained in a finite number of double $B_\epsilon$-cosets.\\
 
The following is Theorem 10.2.2 in \cite{Remy}.
\begin{theorem} Let $G=\GG_\DD(K)$ be a split Kac--Moody group. 
For a subgroup $H\leq G$, the following conditions are equivalent.
\begin{enumerate}
  \item $H$ is bounded.
  \item $H$ fixes a point in the CAT(0)-realization of both $\Delta_+$ and $\Delta_-.$ 
  \item $H$ is Ad-locally finite.
\end{enumerate}
\end{theorem}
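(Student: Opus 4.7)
The plan is to prove $(1) \Rightarrow (2) \Rightarrow (3) \Rightarrow (1)$, reserving the most substantive step for the end. For $(1) \Rightarrow (2)$, the hypothesis makes the orbit $H \cdot B_+$ a finite, hence bounded, subset of the complete CAT(0)-realization of $\Delta_+$; by the Bruhat--Tits fixed-point theorem (using Moussong/Davis completeness), $H$ fixes the unique circumcenter, and the analogous argument on $\Delta_-$ produces a fixed point on the negative side.

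For $(2) \Rightarrow (3)$, each fixed point in the CAT(0)-realization of $\Delta_\epsilon$ lies in a spherical residue whose stabilizer is conjugate to a spherical parabolic $B_\epsilon W_{J_\epsilon} B_\epsilon$ with $W_{J_\epsilon}$ finite. I would project the two fixed points onto a common twin apartment to place $H$, after conjugation, inside a subgroup of the form $L_J \cdot V$. Here $L_J$ is a Levi factor of spherical type $J$ (a finite-dimensional reductive $K$-group, trivially Ad-locally finite), and $V$ is generated by root groups $U_\alpha$ indexed by a prenilpotent set of roots. Each such $U_\alpha$ acts on $(\UU_\DD)_K$ via the locally nilpotent operator $\exp(\ad e_\alpha \otimes r)$, and prenilpotency forces $V$ itself to act locally nilpotently, so that the semidirect product $L_J \cdot V$, and hence $H$, is Ad-locally finite.

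For $(3) \Rightarrow (1)$, the crucial tool is the $Q$-grading on $(\UU_\DD)_K$. Writing an element $h$ in its positive Bruhat stratum $B_+ w B_+$, the operator $\Ad(h)$ shifts the support of a homogeneous vector $v$ of degree $a$ into $w(a)$ plus the monoid generated by positive roots. Choosing a test vector $v$ whose $\Ad H$-orbit spans a finite-dimensional, hence support-finite, subspace constrains the associated Weyl element $w$ to a finite subset of $W$; the Birkhoff/negative Bruhat decomposition gives the analogous finiteness on the negative side, yielding (1).

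The main obstacle is precisely this last step: one has to exhibit a test vector whose $\Ad H$-orbit support genuinely separates distinct Bruhat double cosets. The natural route is to appeal to (a form of) faithfulness of $\Ad$ modulo the center, or equivalently to exploit the presence inside $(\UU_\DD)_K$ of integrable highest-weight modules on which the $W$-action on weights is strictly monotone in the positive-cone direction. A secondary technical point that I expect to require care is the projection onto a common twin apartment in $(2) \Rightarrow (3)$: one needs to know that the pair of fixed points on the two sides can be completed to a balanced subset of a single twin apartment so that the Levi $L_J$ is simultaneously available for both signs.
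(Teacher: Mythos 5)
The statement you were asked to prove is not actually proved in the paper: the author simply cites R\'emy's Theorem 10.2.2, so there is no in-paper argument to compare with, and your plan is broadly the route R\'emy himself takes (Bruhat--Tits fixed point theorem for (1)$\Rightarrow$(2), a Levi-type decomposition of fixators of balanced sets for (2)$\Rightarrow$(3), a grading/support argument for (3)$\Rightarrow$(1)). Your first two steps are essentially right but need small repairs. In (1)$\Rightarrow$(2) the orbit $H\cdot B_+$ is in general \emph{infinite} (a double coset $B_+wB_+$ contains infinitely many chambers over an infinite field); what is true is that the Weyl distance from the base chamber stays in the finite set $\{w_1,\dots,w_n\}$, so the orbit is bounded, which is all that Bruhat--Tits requires. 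In (2)$\Rightarrow$(3) the two fixed points can indeed be placed in a common twin apartment, but the containment of their fixator in $L_J\cdot V$, with $V$ generated by root groups indexed by a \emph{finite} nilpotent set of roots, is itself the nontrivial point (it is the content of Proposition \ref{boundedsubspace}, i.e.\ R\'emy's Lemma 10.3.1 and Proposition 10.3.6); you are implicitly invoking it rather than proving it, and the finiteness of $\Delta^u(\Omega)$ (only finitely many walls separate two points of the interior of the Tits cone) is exactly what makes $L_J\cdot V$, and hence $H$, Ad-locally finite.

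The genuine gap is (3)$\Rightarrow$(1), which you yourself identify as the main obstacle but do not close. Concretely: for $h=b_1\tilde w b_2\in B_+wB_+$ the support of $\Ad(h)v_a$ lies in $w(a+Q_+)+Q_+$, not in $w(a)+Q_+$, and since $w(Q_+)\not\subseteq Q_+$ the monotonicity your sketch relies on fails as stated; possible cancellations mean one cannot simply read off $w$ from the support of the image of a naively chosen test vector. The decisive ingredient --- a vector (or finite-dimensional subspace, e.g.\ the span of the $f_i$) whose finite-dimensional $\Ad H$-invariant hull forces the Weyl components of elements of $H$ into a finite subset of $W$ --- is precisely what must be constructed, and ``faithfulness of $\Ad$ modulo the center'' or ``integrable highest-weight modules inside $(\UU_\DD)_K$'' does not supply it: the adjoint module is not a highest-weight module, and transporting local finiteness of $H$ from $(\UU_\DD)_K$ to some $L(\lambda)$, where the weight-monotonicity argument does work, needs its own argument. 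So the proposal is a correct skeleton, consistent with the cited source, but it is incomplete at its hardest step and would not stand as a proof without that lemma.
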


Now let $\Omega \subseteq \Delta_{cone}$ be a balanced subset which is contained in the standard apartment $\mA$. By the previous proposition, $H:=\Fix \Omega$ is Ad-locally finite. R\'emy attaches to $H$ a certain finite-dimensional $\Ad H$-invariant subspace whose construction we recall.  \\
Let $\bar{K}$ be an algebraic closure of $K$. 
Let $\mcL_\DD:=\lieg_\DD \cap \UU_\DD$, where $\UU_\DD$ is the $\ZZ$-form of the universal enveloping algebra. Then $\mcL$ has a grading: $\mcL_\DD=\mcL_0 \oplus \bigoplus_{a \in \Phi}\mcL_a.$

Let $\Delta(\Omega):=\{a \in \Phi: \Omega \subseteq a\}$, $\Delta^u(\Omega):=\{a \in \Phi: \Omega \subseteq a, \Omega \subsetneq \partial a\}$ and let $\Delta^m(\Omega):=\{a \in \Phi: \Omega \subseteq \partial a\}.$
Here the roots are viewed as half-spaces in the cone realization.
Write $L:=T\langle U_\alpha: \alpha \in \Delta^m(\Omega)\rangle $ and $U:=\langle U_\alpha: \alpha \in \Delta^u(\Omega)\rangle.$ 
 
\begin{proposition} \label{boundedsubspace}
Let $W=W_\Omega$ be the smallest $Q$-graded subspace of $(\mcL_\DD)_{\bar{K}}$ with the following properties:
\begin{enumerate}
\item $W$ contains $(\mcL_0)_{\bar{K}}$ and $(\mcL_a)_{\bar{K}}$ for all $a \in \Delta(\Omega)$.
\item The $Q$-support of $W$ contains $-\Delta^u(\Omega).$
\item $W$ is stable under $H:=\Fix \Omega.$
\end{enumerate}
Then the following properties hold:
\begin{enumerate}
\item 
$W$ is finite-dimensional and the kernel of $\Ad: H \to \Ad H|_W$ is precisely the center of $H$. 
\item Let $\bar{H}$ (resp. $\bar{T}, \bar{L}, \bar{U}$) denote the Zariski-closure of $\Ad H|_W$ (resp. $\Ad T|_W$, $\Ad L|_W$, $\Ad U|_W$).
Then $\bar{L}$ is a connected reductive $K$-group, $\bar{T}$ is a maximal torus of $\bar{L}$, $\bar{U}$ is unipotent and $\bar{H}=\bar{L} \ltimes \bar{U}$ is a Levi decomposition. \index{Levi decomposition}
\end{enumerate}
\end{proposition}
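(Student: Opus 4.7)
The plan is to address the three claims in order: existence and finite-dimensionality of $W$, identification of the kernel of $\Ad|_W$ with the center of $H$, and the Levi decomposition of the Zariski closures.

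For existence, I would define $W$ as the intersection of all $Q$-graded $\bar K$-subspaces of $(\mcL_\DD)_{\bar K}$ satisfying (i)--(iii); this intersection inherits the three properties and is therefore the minimal such subspace. Finite-dimensionality rests crucially on the balanced hypothesis: since $\Omega$ lies in finitely many spherical facets, $\Delta^m(\Omega)$ generates a spherical (hence finite) reflection subgroup, and from this one shows that $\Delta(\Omega) \cup (-\Delta^u(\Omega))$ is itself finite. Granted this, the span of $(\mcL_0)_{\bar K}$ together with the finitely many $(\mcL_a)_{\bar K}$ indexed by $\Delta(\Omega)$ is finite-dimensional, and closing under $\Ad(H)$ only adds components in degrees from $-\Delta^u(\Omega)$. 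This follows from the explicit formulas $\Ad(u_a(r)) = \exp(\ad e_a \otimes r)$ together with (TRD 2), which force the $\Ad H$-orbit to stay inside a bounded range of $Q$-degrees.

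For the kernel, the center $Z(H)$ acts trivially because it centralizes all of $H$ and $\Ad$ is defined via conjugation on the enveloping algebra. Conversely, if $h \in H$ acts as the identity on $W$, then it commutes with the generators $\Ad(u_a(r))$ for each $a$ in the support of $W$; combined with $\Ad(h)$ fixing $(\UU_0)_{\bar K}$ pointwise, this forces $h$ to commute with $T$ and with every $U_a$ for $a \in \Delta^m(\Omega) \cup \Delta^u(\Omega)$. These subgroups generate $H$ in the twin root datum presentation, so $h \in Z(H)$.

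For the algebraic structure, $T$ acts Ad-diagonalizably on $W$ through characters in $\Lambda$, so $\bar T$ is a torus. The group $L$ is generated by $T$ and the root groups attached to the finite spherical subsystem $\Delta^m(\Omega)$, hence is essentially a finite-type reductive group acting on the finite-dimensional representation $W$; its Zariski closure $\bar L$ is therefore connected reductive, with $\bar T$ as a maximal torus by a dimension count using the $\Lambda$-weights. The group $U$ is generated by $U_a$ with $a \in \Delta^u(\Omega)$, each acting on $W$ via $\exp(\ad e_a \otimes r)$, which is unipotent on the finite-dimensional $W$; the commutator relations supplied by (TRD 2) restricted to the finite set $\Delta^u(\Omega)$ then force $\bar U$ to be unipotent. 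Finally, $L$ normalizes $U$ because the spherical Weyl subgroup $W_\Omega$ permutes $\Delta^u(\Omega)$, and the semisimple/unipotent dichotomy on $W$ yields $\bar L \cap \bar U = \{1\}$, giving the semidirect product decomposition.

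The main obstacle is the finite-dimensionality of $W$: without balanced-ness of $\Omega$, both $\Delta(\Omega)$ and the $\Ad H$-closure could in principle be infinite. Showing that the balanced condition forces $\Delta(\Omega)\cup(-\Delta^u(\Omega))$ to be finite, and that the combinatorial constraints of the twin root datum confine the $\Ad$-orbit to the prescribed degrees, is where the core geometric content of the proposition sits; the reductivity of $\bar L$ is then a comparatively routine comparison with a classical Chevalley group.
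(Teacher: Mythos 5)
The paper does not actually prove this proposition: it is quoted verbatim from R\'emy and the ``proof'' is a citation to \cite[Lemma 10.3.1, Proposition 10.3.6]{Remy}, so there is no in-paper argument to match yours against. Judged on its own terms, your sketch identifies the right ingredients (balancedness forcing finiteness of $\Delta(\Omega)$, unipotence of the $\Delta^u$-part, normalization of $U$ by $L$), but it has genuine gaps at exactly the points that carry the weight of the statement. First, the finite-dimensionality of $W$: you correctly flag it as the main obstacle, but the justification offered --- that the explicit formula $\Ad(u_a(r))=\exp(\ad e_a\otimes r)$ ``together with (TRD 2)'' confines the $\Ad H$-orbit to a bounded range of degrees --- does not deliver it. (TRD 2) is a group-theoretic commutator axiom and says nothing about which $Q$-degrees $(\ad e_a)^n$ can reach when applied to $\mcL_b$ for $a,b\in\Delta(\Omega)$; the actual argument needs the root-combinatorial/geometric fact that the degrees generated from $\Delta(\Omega)\cup\{0\}\cup(-\Delta^u(\Omega))$ under addition of elements of $\Delta(\Omega)$ meet the support of $\mcL_\DD$ in a finite set, and this is precisely the content of R\'emy's Lemma 10.3.1, not a consequence of the twin root datum axioms. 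Relatedly, your construction of $W$ as an intersection of all subspaces satisfying (i)--(iii) does not obviously work: condition (ii) (support containing $-\Delta^u(\Omega)$) is not stable under intersections, so even the existence of a smallest such subspace needs an explicit construction rather than a minimality-by-intersection argument.

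Second, the structural claims are under-justified at two places. The equality $\bar H=\bar L\ltimes\bar U$ presupposes that $\Fix\Omega$ is (at least up to Zariski closure of its image) generated by $L$ and $U$; this decomposition of the fixator of a balanced convex set is a substantive fact in R\'emy's development and is nowhere addressed in your sketch --- you only prove (in outline) that $L$ normalizes $U$, not that $H$ is exhausted by $L\cdot U$. Moreover your reason for $\bar L\cap\bar U=\{1\}$, the ``semisimple/unipotent dichotomy,'' fails as stated, since the reductive group $\bar L$ contains plenty of unipotent elements (its own root groups); the correct argument is that $\bar L\cap\bar U$ is a normal unipotent subgroup of the reductive group $\bar L$, hence trivial. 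Finally, both directions of the kernel statement are too glib: the center of $H$ does not act trivially on $W$ merely ``because $\Ad$ is conjugation'' (one must argue, e.g., that central elements lie in $T$ and kill the characters occurring in the support of $W$), and conversely, $\Ad(h)|_W=\id$ gives commutation of $h$ with the generators $u_a(r)$ only modulo $\ker\Ad$, so identifying the kernel with the center of $H$ requires handling that central kernel rather than quoting the presentation of $H$ directly.
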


\begin{proof}
This is \cite[Lemma 10.3.1, Proposition 10.3.6]{Remy}.
\end{proof}
\subsection{Almost split Kac-Moody groups}
We recall R\'emy's construction of almost split Kac--Moody groups, cf. \cite{Remy}, \cite{RemySurvey}. These groups can be obtained via Galois descent, i.e. by taking the fixed points of a certain Galois group action on a split Kac--Moody group. One of the main features of an almost split Kac--Moody group is that it is again endowed with a twin root datum. \\
Let $K$ be a field, $\bar{K}$ an algebraic closure of $K$ and $K_s$ the separable closure of $K$ in $\bar{K}$. Let $\DD$ be a Kac--Moody root datum and let $\GG_\DD$ be a constructive Tits functor. \index{form!prealgebraic}\\
A {\bf prealgebraic $K$-form} of $\GG_\DD$ is a couple $(\GG,\UU_K)$, where $\GG$ is a group functor on the category of field extensions of $K$ which coincides with $\GG_\DD$ over extensions of $\bar{K}$, and $\UU_K$ a $K$-form of the filtered algebra $(\UU_\DD)_{\bar K}$ satisfying
\begin{enumerate}
\item[(PA 1)] The adjoint representation $\Ad$ is Galois-equivariant, i.e. 
for each $K$-algebra $R$ and each $\sigma \in \Gamma:=\Gal(K_s|K)$, the following diagram commutes, where $R_{\bar{K}}:=\bar{K}\otimes_K  R$:
$$\xymatrix{ \GG(R_{\bar{K}}) \ar[r]^-{\Ad}\ar[d]_\sigma & \ar[d]_\sigma  \Aut_{filt}(\UU_\DD)(R_{\bar{K}}) \\
\GG(R_{\bar{K}}) \ar[r]^-{\Ad} &   \Aut_{filt}(\UU_\DD)(R_{\bar{K}}) 
}
$$
\item[(PA 2)] If $\iota\colon K \to L$ is an injection of fields, then $\GG(\iota)\colon\GG(K)\to \GG(L)$ is injective, too.  
\end{enumerate}

Let $E$ be a field satisfying $K \subseteq E \subseteq \bar{K}.$ Then a prealgebraic form $(\GG,\UU_K)$ is said to {\bf split} over $E$ if it is $E$-isomorphic to the split form $(\GG_\DD,(\UU_\DD)_{E})$ over $E$ (see \cite[11.1.5]{Remy} for a precise definition). \\

{\it Convention.} In this subsection, let $(\GG,\UU_K)$ always be a prealgebraic $K$-form of $\GG_\DD$ which is assumed to split over an infinite field $E$ such that $E|K$ is a normal field extension.\\

Let $\Gamma:=\Gal(K^{sep}|K)$ be the absolute Galois group. Then for each field $L\subseteq \bar{K}$ and each $\gamma \in \Gamma$, there is an action of $\Gamma$ on $\GG$ given by $(\gamma\cdot \GG)(L):=\GG(\gamma\cdot L).$ Since $E|K$ is assumed to be normal, $\Gamma$ acts on $\GG(E)$, and since $\GG$ is assumed to split over $E$, each element of $\Gal(K^{\text{sep}}|E)$ acts trivially on $\GG(E)$, i.e. the $\Gamma$-action factors through $\Gal(E|K)$. \\
Fix an isomorphism $\Psi\colon \GG(E) \to \GG_\DD(E).$ By abuse of notation, let 
$T(E)\leq \GG(E)$ again denote the subgroup of $\GG(E)$ which is mapped to the group $T(E)\leq \GG_\DD(E)$. Then $\Gamma$ preserves the conjugacy class of $T(E)$ (cf. \cite[11.2.2]{Remy}). For $\sigma \in \Gamma$, choose $g \in \GG(E)$ such that the so-called {\bf rectification} $\bar{\sigma}:=\text{int }g^{-1} \circ \sigma$ stabilizes $T(E).$ Then $\bar{\sigma}$ induces an automorphism of $W=N(T(E))/T(E)$.\\
\index{rectification}

Let $(\GG,\UU_K)$ be a prealgebraic $K$-form of $G$ which splits over $E$. Then $G$ is said to satisfy (SGR) if for each $\sigma \in \Gamma$, each rectified automorphism $\bar{\sigma}$ of $G(E)$ induces a permutation of the root groups relative to $T(E).$

\begin{remark}By the explicit description of $\Aut(\GG_\DD(E))$ by Caprace (\cite[Theorem A]{PECAbstract}) this condition is empty: $\bar{\sigma}$ automatically preserves root groups. Indeed, by the quoted result any automorphism $\varphi$ can be written as a product $\varphi=\varphi_2 \circ \varphi_1$ of an inner automorphism $\varphi_1$ (which can be chosen to be trivial if $\varphi(T)=T)$ and an automorphism $\varphi_2$ which permutes the root groups: $\varphi_2(x_\alpha(r))=x_{\iota(\alpha)}(c_\alpha \sigma_\alpha(r))$, where $\iota\colon \Phi \to \Phi$ is a bijection, $c_\alpha \in E^\times$ and $\sigma_\alpha \in \Aut(E).$
\end{remark}

It follows that ${\bar \sigma}$ induces a permutation of the roots $\Phi$ of $W$.
Moreover, $\bar{\sigma}$ induces an action on the groups $X^*(T(E))_{\text{abs}}$ resp. $X_*(T(E))_{\text{abs}}$ of abstract characters resp. cocharacters.\\

In this situation, $G=(\GG,\UU)$ is called a {\bf Kac--Moody $K$-group} if for each $\bar{\sigma}$, 
\begin{enumerate}
 \item [(ALG 1)] $\bar{\sigma}$ respects the $Q$-grading of $(\UU_\DD)_E$ and the induced permutation of $Q$ satisfies $\bar{\sigma}(na)=n(\bar{\sigma}(a))$ for all $n \in \NN.$
 \item [(ALG 2)] $\bar{\sigma}$ stabilizes the algebraic characters $\Lambda \leq X^*(T(E))_{\text{abs}}$ resp. the algebraic cocharacters $\Lambda^\vee \leq
X_*(T(E))_{\text{abs}}.$
\end{enumerate}

\index{form!almost split}
\index{form!quasi-split}
\index{Kac--Moody group!almost split}
\index{Kac--Moody group!quasi-split}
Let $G=(\GG,\UU)$ be a Kac--Moody $K$-group. Then $G$ is called {\bf almost split} if the action of $\Gamma$ on $\GG(E)$ stabilizes the conjugacy classes of the standard Borel subgroups $B_+(E)$ and $B_-(E).$
The group $G$ is called {\bf quasi-split} if there are two opposite Borel groups $B_1,B_2$ which are stable under the $\Gamma$-action.\\
Note that a quasi-split Kac--Moody group is automatically almost split. 

\begin{remark} The terminology ``almost split'' stems from the following fact: although an almost split Kac--Moody group has an anisotropic kernel $Z(k)$, 
this group is {\it finite-dimensional}. 
\end{remark}

\index{Galois descent}
{\bf Galois descent.} Let $G=(\GG,\UU)$ be a Kac--Moody $K$-group. Then $G$ is said to be obtained via {\bf Galois descent} if $G$ splits over the separable closure $K_s$ of $K$ in $\bar{K}$ and for each separable field sub-extension $E|K$, the group $\GG(E)$ is precisely the fixed point set of $\Gal(K^{\text{sep}}|E)$ in $\GG(K^\text{sep}).$ In this case, $\GG$ is said to satisfy the condition (DCS).

\subsection{An explicit construction} \label{RemyConstruction}
R\'emy \cite[Ch. 13.2.3]{Remy} gives an explicit construction of quasi-split Kac--Moody groups as follows.  Let $\mathcal G_\DD$ be a constructive Tits functor, $E|K$ a finite Galois extension, $\Gamma:=\Gal(E|K)$ and suppose there is a homomorphism $*\colon \Gamma \to D_A$, where $D_A$ is the Dynkin diagram associated to $A$. Then $*$ gives rise to an action of $\Gamma$ on $\GG_\DD(E)$, and the set $G(k)$ of $\Gamma$-fixed points is a quasi-split Kac--Moody group.
\begin{example} 
\label{SU3example}
The following example is given in \cite[3.5.B]{RemySurvey}. Let $E|K$ be a separable quadratic field extension, $\Gal(E|K)=\langle \sigma \rangle$ and let $\GG_\DD$ be the affine Kac--Moody group $\GG_\DD(K)=\SL_3(K[t,t^{-1}])$. Let $\SU_3(K)\leq \SL_3(E)$ denote the group of matrices which preserve
a fixed three-dimensional $\sigma$-Hermitian form of Witt index 1. Then the group $\SU_3(K[t,t^{-1}])$ is a quasi-split Kac--Moody group obtained by the $*$-action where $\sigma^*$ switches two nodes of the diagram associated to $\GG_\DD$.
\end{example}
More generally, there is the following class of examples of affine quasi-split Kac--Moody groups.
\begin{proposition} Let $\GG$ be a connected simply connected almost simple algebraic group defined over $\Fq_q$ which is $\Fq_q$-isotropic. 
Then for any field $K$ containing $\Fq_q,$ the group
$\GG(K[t,t^{-1}])$ is an almost split Kac--Moody $\Fq_q$-group.
\end{proposition}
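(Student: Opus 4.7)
The plan is to realize $\GG(K[t,t^{-1}])$ as the value at $K$ of a quasi-split Kac--Moody $\IF_q$-group obtained via the explicit construction of Subsection~\ref{RemyConstruction}. Since $\GG$ is connected simply connected semisimple over the finite field $\IF_q$, Lang's theorem yields $H^1(\Gal(\overline{\IF_q}|\IF_q),\GG)=1$, so $\GG$ is already quasi-split over $\IF_q$. I choose a finite Galois extension $E|\IF_q$ which splits $\GG$, set $\Gamma:=\Gal(E|\IF_q)$, and let $A$ be the Cartan matrix of the (now split) group $\GG_E$. The $\IF_q$-form of $\GG$ then corresponds to a homomorphism $*\colon\Gamma\to\Aut(A)$. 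Let $\tilde A$ be the untwisted affine extension of $A$, set $\tilde\DD:=\DD^{\tilde A}_{sc}$, and let $\GG_{\tilde\DD}$ denote the associated constructive Tits functor. I extend $*$ to $\tilde{*}\colon\Gamma\to\Aut(\tilde A)$ by letting $\Gamma$ act trivially on the affine node, which is legitimate because the highest root of $A$ is $\Gamma$-invariant.

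Next I would identify the functor $\GG^{KM}\colon L\mapsto\GG(L[t,t^{-1}])$ on field extensions of $\IF_q$ with the value of a prealgebraic $\IF_q$-form of $\GG_{\tilde\DD}$. For any field $L\supseteq E$, the classical loop-group realization gives a natural identification $\GG(L[t,t^{-1}])\cong\GG_{\tilde\DD}(L)$: the positive simple root groups of $\GG_{\tilde\DD}(L)$ attached to finite nodes correspond to the usual $U_{\alpha_i}(L)\leq\GG(L)$, while the one attached to the affine node corresponds to $tU_{-\theta}(L)$, where $\theta$ is the highest root of $A$. I take $\UU_{\IF_q}$ to be the $\IF_q$-form of $(\UU_{\tilde\DD})_{\overline{\IF_q}}$ inherited from the $\IF_q$-structure of $\GG$. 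Under this identification, the natural $\Gamma$-action on $\GG(E[t,t^{-1}])$ (acting on $E$-coefficients and fixing $t$) translates, after rectification, into the $\tilde{*}$-action on $\GG_{\tilde\DD}(E)$ of Subsection~\ref{RemyConstruction}.

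It remains to check the axioms. (PA~1) and (PA~2) follow from functoriality of the adjoint representation and the fact that a field inclusion $L\hookrightarrow L'$ induces an injection $L[t,t^{-1}]\hookrightarrow L'[t,t^{-1}]$. Since $\tilde{*}$ is a Cartan matrix automorphism, it preserves the $Q$-grading of the universal enveloping algebra, giving (ALG~1); and it restricts to an automorphism of $\Lambda$ and of $\Lambda^\vee$, giving (ALG~2). Condition (SGR) is automatic by the remark following its definition. Finally, because $\tilde{*}$ permutes the standard simple root groups, the standard positive and negative Borel subgroups of $\GG_{\tilde\DD}(E)$ are preserved by the rectified $\Gamma$-action, so their conjugacy classes are $\Gamma$-stable, and the resulting Kac--Moody $\IF_q$-group is (quasi-, hence) almost split.

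The main obstacle is the loop-group identification of the second paragraph: matching $\GG(L[t,t^{-1}])$ with $\GG_{\tilde\DD}(L)$ precisely enough to transport all the structural data (root groups, torus, $\UU_{\IF_q}$) between the two pictures, and then confirming that the semilinear $\Gamma$-action on $\GG(E[t,t^{-1}])$ really is the rectification of the $\tilde{*}$-action. The simply-connected hypothesis on $\GG$ is crucial here to avoid ambiguity between several isogenous Kac--Moody root data, while the $\IF_q$-isotropy of $\GG$ ensures that the finite-dimensional anisotropic kernel of the resulting Kac--Moody group sits inside a genuine Levi factor, placing the construction within the scope of R\'emy's almost-split theory.
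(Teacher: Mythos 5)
Your overall strategy --- Lang's theorem to see that $\GG$ is quasi-split over $\Fq_q$, extension of the $*$-action to the untwisted affine diagram by fixing the affine node, and realization of $\GG(K[t,t^{-1}])$ as the fixed points of the resulting twisted Galois action as in Subsection \ref{RemyConstruction} --- is exactly the route the paper implicitly relies on: the paper gives no argument of its own, it cites R\'emy (Ch.~11) and Bux--Gramlich (Prop.~10.2), and those references carry out precisely this kind of verification. However, one step of your sketch is wrong as stated, and the step you yourself call ``the main obstacle'' is not a side issue but the actual content of the proposition.

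The wrong step is the identification $\GG(L[t,t^{-1}])\cong\GG_{\tilde\DD}(L)$ with $\tilde\DD:=\DD^{\tilde A}_{sc}$. For the simply connected affine root datum the lattice $\Lambda$ has rank $\ell+1$ (where $\ell=\rk\GG$), so the torus of $\GG_{\tilde\DD}(L)$ is $\Hom(\Lambda,L^\times)\cong(L^\times)^{\ell+1}$, whereas the torus occurring in the twin root datum of the loop group is $T(L)\cong(L^\times)^{\ell}$. Concretely, the coweight $\sum_i a_i^\vee h_i$ built from the dual Kac labels lies in the left kernel of $\tilde A$, hence pairs trivially with all $c_j$, and $\GG_{\DD^{\tilde A}_{sc}}(L)$ is the Kac--Moody \emph{central extension} of the loop group by $L^\times$, not the loop group itself (already for type $\tilde A_1$: a rank-two torus versus the diagonal $L^\times$ of $\SL_2(L[t,t^{-1}])$). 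The datum that does yield $\GG(L[t,t^{-1}])$ is the ``loop'' root datum with $\Lambda=X^*(T)$, $c_0:=-\theta$, $h_0:=-\theta^\vee$, $c_i:=\alpha_i$, $h_i:=\alpha_i^\vee$; this is what underlies Example \ref{SU3example} (cf.\ R\'emy, \S 13.2). Simple connectedness of $\GG$ is indeed needed, but for a different reason than the one you give: it ensures that $\GG(L[t,t^{-1}])$ is generated by $T(L)$ together with the affine root groups, so that the Tits functor value is the whole loop group. Finally, matching the coefficientwise Galois action on $\GG(E[t,t^{-1}])$ with a rectified $\tilde{*}$-action (beware structure-constant signs on $U_{-\theta}$, so this is not purely formal), and verifying (PA~1)--(PA~2), (ALG~1)--(ALG~2) and (DCS), is exactly what the cited Proposition 10.2 of Bux--Gramlich does; since your proposal defers precisely these checks and builds on the wrong choice of $\tilde\DD$, it does not yet constitute a proof.
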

\begin{proof} This follows from \cite[Chapter 11]{Remy}. A detailed proof is given in \cite[Proposition 10.2]{BuxGramlich}. 
\end{proof}

\subsection {The Galois action on the building} 
Let $K$ be a field, let $E|K$ be a normal field extension, where $E$ is infinite, and let $\Gamma:=\Gal(E|K)$. Let $G$ denote an almost split Kac--Moody $K$-group obtained by Galois descent which splits over $E$.\\
Let $\Delta=(G(E)/B_+(E),G(E)/B_-(E))$ denote the twin building associated to the group $G(E) \cong \GG_\DD(E).$ The $\Gamma$-action on $G(E)$ then gives rise to an action on $\Delta$ since it preserves the respective conjugacy classes of $B_+,B_-$, cf. \cite[11.3.2]{Remy}.

\index{rectification}
Moreover, there is a better rectification of automorphisms available, that is, for each $\sigma \in G$ there is a $g_\sigma \in G(E)$ (well-defined up to an element in $T(E)$) such that $\sigma^*:=\text{int }g_\sigma^{-1} \circ \sigma$ stabilizes both $B_+(E)$ and $B_-(E).$

\index{action!*-}
This gives a well-defined action of $\Gamma$ on $W$, called the $*$-{\bf action}. This action stabilizes the generating set $S$, i.e. the action is by diagram automorphisms (\cite[11.3.2]{Remy}).\\
It follows that $\Gamma$ acts on the CAT(0)-realization of the buildings $\Delta_+,\Delta_-$. Although $\Gamma$ might be infinite (there is no assumption that $E|K$ is finite, i.e. that $G$ splits over a finite extension of $K$), it can be shown that each orbit is bounded \cite[11.3.4]{Remy}, so by the Bruhat-Tits fixed point theorem, there are fixed points in both halves of the twin building. 
By the dictionary relating the building to its CAT(0)-realization, this is equivalent to saying that there are spherical residues $R_+,R_-$ in both buildings which are stable under the Galois group. The residues $R_+,R_-$ in general will not be chambers, though. Indeed, $\Gamma$ will fix two opposite chambers if and only if $G$ is quasi-split. \\

{\bf The action on the cone realization.} Similarly, $\Gamma$ acts on the cone realization $\Delta_{cone}$ of $\Delta$. Let $\Delta_{cone}^\Gamma$ denote the set of fixed points, then it is clear that $G(K)$ acts on $\Delta_{cone}^\Gamma.$ In what follows, certain subsets of $\Delta_{cone}^\Gamma$ will be singled out, the stabilizers of which then will form the ingredients of a twin root datum for $G(K).$\\

\index{relative!apartment}
To start with, a maximal generic subspace (i.e.\ a sub-vectorspace of an apartment which meets the interior of the Tits cone) which is fixed by $\Gamma$ is called a {\bf $K$-apartment.} These can be shown to exist if $G$ splits over the separable closure of $K$. \\
In the cone realization of the standard twin apartment, such a generic subspace $L$ is given by
$$L=\{x \in V^*: e_i(x)=0 \; \forall \;i: s_i \in S_0 \text { and } e_i(x)=e_j(x) \text{ for }\Gamma^*s_i=\Gamma^*s_j\},$$ cf. \cite[Lemma 12.6.1]{Remy}.
Here $S_0$ is the type of the facet containing a maximal $K$-chamber $F$, see below. Note that the type of a chamber is $\emptyset$.

A {\bf $K$-facet} is the set of $\Gamma$-fixed points of a $\Gamma$-stable facet. A maximal $K$-facet is a {\bf $K$-chamber}. A {\bf $K$-root} (resp. {\bf $K$-half-apartment}, resp. {\bf $K$-wall}, resp. {\bf $K$-panel}) is an apartment (resp. half-apartment, resp. wall, resp. panel) relative to a $K$-apartment $A_K$, i.e. the trace of the corresponding object on $A_K$, which is assumed to be non-empty.\\
Two $K$-chambers of the same sign are called {\bf adjacent} if they contain a common $K$-panel in their closure.\\
Two $K$-chambers of opposite sign are called (geometrically) {\bf opposite} if there is a twin apartment which contains them and in which they are opposite.\\

For a given $K$-apartment $A_K$, $\Delta_K^{re}(A_K)$ is defined as the set of all {\it real} $K$-roots, i.e. those whose relative wall is again a generic subspace, and $\Phi_K(A_K)$ as the set of all $K$-half-apartments relative to $A_K$.\\
For a real $K$-root $a$, let $a^\natural$ denote its restriction to $A_K.$
Then let 
$$\Delta_a:=\{b \in \Delta^{re}: \exists \lambda \geq 1: b^\natural=\lambda a^\natural\}.$$
Note that $\Delta_a$ is a prenilpotent set of roots which is $\Gamma$-stable. \\

\index{standardisation!}
\index{standardisation!rational}
\index{standardisation!compatible}

Finally, a {\bf standardisation} of the cone realization $\Delta_{cone}$ of $G(E)$ is a triple $(A,C,-C)$ where $A$ is a twin apartment which contains the two opposite chambers $C$ and $-C$ (this corresponds to fixing a maximal torus $T$ and two opposite Borel groups $B_1,B_2$ such that $B_1 \cap B_2=T$). A {\bf rational standardisation} is a triple $(A_K,F,-F)$ where $A_K$ is a $K$-apartment and $F,-F$ are two opposite $K$-chambers which are contained in $A_K.$ Two of these triples are called {\bf compatible} if $A$ contains $A_K$ and $C,-C$ contain $F,-F$ respectively.

\subsection{The twin root datum of an almost split group}
Let $K \subseteq E \subseteq K^\text{sep}$ be an inclusion of fields and let $G$ be an almost split Kac--Moody $K$-group which is obtained by Galois descent and splits over $E$. 
For a subgroup $U\leq G(K^{sep})$, let $U(E):=G(E) \cap U$ denote the {\bf group of $E$-rational points} of $U$.\\
A $\Gamma$-invariant parabolic subgroup $P$ of $G$ is called a {\bf $K$-parabolic subgroup}.
Such a $K$-parabolic group is precisely the stabilizer of a $K$-facet.\\
{\bf The anisotropic kernel.} Let $(A_K,F,-F)$ be a rational standardisation. Then $Z:=Z(A_K):=\Fix_{G(K^{sep})}(A_K)$ is called the {\bf anisotropic kernel} (with respect to $A_K$). Let $Z(K)$ denote the set of its $K$-rational points.\\
Let $\Omega:=F \cup -F$. Then $\Ad_\Omega(Z(A_K))$ is isomorphic to a semisimple algebraic $K$-group which is $K$-anisotropic. It follows that $Z$ contains a {\bf maximal $K$-split torus} $T_d(K)$, which can be identified with the connected component of the identity of its center (\cite[12.5.2]{Remy}). The set of all $G(K)$-conjugates of $T_d(K)$ is in bijection with the $K$-apartments.\\
{\bf Rational root groups.} For a real $K$-root $a$, let $V_a:=\langle U_b: b \in \Delta_a \rangle (K)$. By (DCS), $V_a$ is just the fixed point group of $\Gamma$ acting on the $\Gamma$-invariant group 
$U_{a^\natural}:=\langle  U_b: b \in \Delta_a \rangle$.

{\bf Rank 1 groups.} Let $E$ be a $K$-panel, $\Omega:=E \cup -E$ and denote by $M(\Omega)(K^\text{sep})$ its fixator in $G(K^\text{sep}).$
Then $M(\Omega)=Z \langle V_\alpha, V_{-\alpha}\rangle$ for the $K$-root $\alpha$ with $E \subseteq \partial\alpha.$  

The group $M(\Omega)$ is a reductive algebraic group defined over $K$ of split semisimple rank 1, which can be seen by considering $\Ad_\Omega(M_\Omega)$. It follows that a rational root group $V_\alpha$ is isomorphic to a root group of a semisimple $K$-group (cf. \cite[12.5.4]{Remy}).  \\%

Let $N(K)$ denote the stabilizer of $A_K$ in $G(K).$ Then $W^\natural:=N(K)/Z(K)$ is called the {\bf relative Weyl group}.
It can be shown that $W^\natural$ is in fact a Coxeter group with generating set $S^\natural$ whose set of roots is in bijection with the half-apartments of $A_K$, see below.\\

R\'emy proved the following important and difficult theorem (\cite[Theorem 12.4.3]{Remy}).
\begin{theorem}
 Let $G$ be an almost split Kac--Moody $K$-group which is obtained by Galois descent. Let $(A_K,F,-F)$ be a rational standardisation. Then the group of rational points $G(K)$ is endowed with a twin root datum $(Z_A(K),(V_\alpha)_{\alpha \in \Phi(W^\natural,S^\natural)})$.
\end{theorem}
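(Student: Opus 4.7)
The plan is to verify the four axioms (TRD 1)–(TRD 4) in turn for the datum $(Z_A(K),(V_\alpha)_{\alpha \in \Phi(W^\natural,S^\natural)})$, in each case reducing, via Galois descent, to a statement about $K$-rational points of rank one or rank two subgroups where the classical Borel--Tits theory of isotropic reductive $K$-groups applies. The split twin root datum on $G(E)$ serves as the ambient structure throughout. First one checks the preliminaries: each $V_\alpha$ is well-defined and non-trivial because $\Delta_a$ is a $\Gamma$-stable prenilpotent set of roots and $U_{a^\natural} = \langle U_b : b \in \Delta_a\rangle$ is $\Gamma$-invariant, so by (DCS) we have $V_\alpha = U_{a^\natural}(E)^\Gamma$. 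The anisotropic kernel $Z_A$ normalizes each $U_{a^\natural}$, and hence $Z_A(K)$ normalizes each $V_\alpha$.

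For (TRD 3) and (TRD 4), I would use the rank one rational Levi $M(\Omega)$ associated to a $K$-panel with $\Omega = E \cup -E$, which, as already recalled, is a connected reductive $K$-group of split semisimple $K$-rank one, with $V_{\pm\alpha}$ identified with its two opposite rational root groups in the sense of Borel--Tits. The $SL_2$-type computation in an isotropic reductive $K$-group of rank one then produces, for any $u \in V_\alpha \setminus \{1\}$, elements $u', u'' \in V_{-\alpha}$ such that $m(u):=u'uu''$ lies in the normalizer $N(K)$ of $A_K$ and acts on $A_K$ as the reflection attached to $\alpha$; conjugation by $m(u)$ sends $V_\beta$ to $V_{s_\alpha\beta}$ because the analogous property already holds in $G(E)$ for the split twin root datum, and the $\Gamma$-action permutes the pieces compatibly. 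The coset identity $m(u)Z_A(K)=m(v)Z_A(K)$ then follows from the corresponding identity inside $M(\Omega)(K)/T_d(K)$. Axiom (TRD 4) is immediate: if $V_{\alpha_s}$ were contained in $V_-$, then passing to $E$-points one would contradict the split (TRD 4) for $G(E)$.

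For (TRD 2), the commutator relation, I would reduce to rank two. Given a prenilpotent pair $\{\alpha,\beta\}$ of $K$-roots, the set $\Delta_{[\alpha,\beta]} = \bigcup_{\gamma \in [\alpha,\beta]} \Delta_\gamma$ is a $\Gamma$-stable prenilpotent set of split roots whose associated group sits in a Levi of rank two. In $G(E)$ the commutator $[U_{a^\natural}, U_{b^\natural}]$ lies in $\langle U_c : c \in (a^\natural,b^\natural)\rangle$ by iteration of the split (TRD 2), hence in $\prod_{\gamma \in (\alpha,\beta)} U_{\gamma^\natural}$; taking $\Gamma$-fixed points using (DCS) and the $\Gamma$-stability of each $\Delta_\gamma$ yields $[V_\alpha,V_\beta]\subseteq V_{(\alpha,\beta)}$. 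Finally, for (TRD 1), I would apply the Galois-fixed Birkhoff/Bruhat decomposition of $G(K)$, together with the fact that each minimal $K$-parabolic is generated by $Z_A(K)$ and the positive rational root groups, to write every element of $G(K)$ as a product in $Z_A(K)\cdot \langle V_\alpha : \alpha \in \Phi(W^\natural,S^\natural)\rangle$.

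The main obstacle is (TRD 2): the rational root groups $V_\alpha$ are fixed points of groups generated by families $\Delta_a$ of split root groups, so a commutator of two of them is, a priori, a complicated nested product of split commutators involving Galois conjugates. Controlling the $Q$-supports of the resulting elements and showing that the Galois descent of the split commutator containment still factors through the ``correct'' rational interval $(\alpha,\beta)$ is the delicate point, and is precisely where one needs the compatibility of the $*$-action with the $Q$-grading (ALG 1) and the stability of the algebraic characters (ALG 2). Once this rank-two descent is in place, the other axioms follow by routine reduction to the rank one case.
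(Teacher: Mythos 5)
The paper does not actually prove this statement: it is quoted verbatim as R\'emy's Theorem 12.4.3 and explicitly labelled ``important and difficult'', so there is no in-paper argument to measure your sketch against --- what you propose is a compressed reconstruction of a substantial part of R\'emy's book, and as such it has genuine gaps rather than omissions of routine detail. First, everything you treat as ambient background --- that $W^\natural=N(K)/Z(K)$ is a Coxeter group with generating set $S^\natural$, that the $K$-half-apartments form the root system $\Phi(W^\natural,S^\natural)$, and that the $\Gamma$-fixed points of the twin building form a twin building on which $G(K)$ acts strongly transitively --- is the core content of the theorem, not a given. Second, your argument for (TRD~1) invokes a ``Galois-fixed Birkhoff/Bruhat decomposition'' of $G(K)$: taking $\Gamma$-fixed points does not commute with the decomposition $G(E)=\bigcup_{w} B_+wB_+$, since a fixed element of a double coset need not factor into fixed pieces; producing such a decomposition for $G(K)$ is exactly the descent work (fixed points in $\Gamma$-stable spherical residues via the Bruhat--Tits fixed point theorem, transitivity of $G(K)$ on $K$-apartments and $K$-chambers, and so on), so this step is circular as written. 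Third, non-triviality of $V_\alpha$ does not follow from $\Gamma$-stability of $U_{a^\natural}$ --- a Galois action on a non-trivial group can perfectly well have trivial fixed points; one needs the identification, via $\Ad_\Omega$ together with (DCS) and (ALG~1--2), of $M(\Omega)$ with an isotropic reductive $K$-group of split semisimple rank one, and that identification is intertwined with the theorem rather than available in advance of it.

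For (TRD~2) you correctly locate the delicate point but do not resolve it. Beyond controlling $Q$-supports, one must show that a $\Gamma$-fixed element of the group attached to the split interval decomposes, through the unique ordered-product writing over a nibbling sequence, into factors grouped along $\Gamma$-orbits of roots, each factor being itself $\Gamma$-fixed, so that it lands in $\langle V_\gamma:\gamma\in(\alpha,\beta)\rangle$ and not merely in the fixed points of the big group; and one must handle multipliable relative roots, since $\Delta_a$ contains roots $b$ with $b^\natural=\lambda a^\natural$, $\lambda\geq 1$, so the relative interval is not simply the restriction of a split interval. In short, your overall strategy --- descent combined with reduction to relative rank one and rank two reductive subgroups --- points in the direction R\'emy actually takes, but as a proof the proposal presupposes the main structural outputs of the descent theory and leaves the two hardest axioms, (TRD~1) and (TRD~2), unestablished.
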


{\bf Geometric realization of the associated twin building.} 
It can be checked \cite[12.4.4]{Remy} that the set of $\Gamma$-fixed points in $\Delta(G(E))$ gives a geometric realization of the twin building associated to $G(K)$ in the sense that adjacency and opposition can be checked by looking at the fixed points in $\Delta_{cone}(G(E))$.\\

Just like in the finite-dimensional case (cf. \cite[Chapter 42]{TitsWeiss}), we have the following fact:
\begin{proposition} Let $G(K)$ be a quasi-split Kac--Moody group obtained via Galois descent. Then the derived group of the anisotropic kernel $Z$ is trivial, i.e. $Z(K)$ is abelian.
\end{proposition}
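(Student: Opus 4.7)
The plan is to use the geometric characterization of the quasi-split property together with the structure of the twin BN-pair.

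First I would unpack what quasi-split means geometrically. By the discussion preceding the proposition, $G$ is quasi-split precisely when the Galois group $\Gamma$ fixes two opposite chambers of the split twin building $\Delta(G(K^{sep}))$ (equivalently, stabilizes two opposite Borel subgroups $B_1,B_2 \leq G(K^{sep})$). Consequently, in a rational standardisation $(A_K,F,-F)$ attached to $G$, the maximal $K$-chambers $F$ and $-F$ are not merely $\Gamma$-fixed facets but honest opposite chambers of the split building, i.e.\ their type is empty.

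Next I would compare the anisotropic kernel with the intersection $B_1 \cap B_2$. By definition $Z = Z(A_K) = \Fix_{G(K^{sep})}(A_K)$ fixes every point of $A_K$ pointwise; in particular it fixes the two opposite chambers $F$ and $-F$. Fixing a chamber in the building is exactly stabilizing the corresponding Borel subgroup, so $Z \leq B_1 \cap B_2$. Now $(B_+,B_-,N,S)$ is a twin BN-pair for $G(K^{sep})$, and a defining property of twin BN-pairs is that the intersection of two opposite Borel subgroups is a Cartan subgroup $H$, which in the split Kac--Moody setting is the torus $T(K^{sep}) = \Hom(\Lambda,(K^{sep})^\times)$. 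This group is abelian, hence so is any subgroup; thus $Z$, and a fortiori its group of $K$-rational points $Z(K)$, is abelian.

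The only step that requires a brief justification beyond citing the structure theory is the identification of the pointwise fixator of a pair of opposite chambers with $B_1 \cap B_2$; this follows at once from the BN-pair axioms together with the fact that, in a twin building, the stabilizer of a chamber $gB_\epsilon$ in $G$ is $gB_\epsilon g^{-1}$ and opposite chambers correspond to opposite Borels. Once this is in hand, the proposition is immediate. I do not expect any genuine obstacle here; the statement is essentially a translation of the quasi-split hypothesis via the dictionary between geometry and group structure furnished by the twin BN-pair.
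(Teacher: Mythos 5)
Your argument is correct and follows essentially the same route as the paper: the quasi-split hypothesis yields two $\Gamma$-stable opposite Borel subgroups, the anisotropic kernel $Z=\Fix(A_K)$ is shown to stabilize both (the paper phrases this via $A_K$ lying in the cone of the two opposite chambers, you via $F,-F$ being genuine chambers), and hence $Z(K)\leq B_+\cap B_-=T(E)$, which is abelian. No substantive difference or gap.
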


\begin{proof}
By definition, the Galois group $\Gamma$ stabilizes two opposite Borel groups of $G(E)$, where $E$ is a splitting field of $G$. Without loss of generality, these can be assumed to be the standard Borel groups $B_+,B_-.$ By the explicit description of the generic subspace $A_K$ it follows that $A_K$ is entirely contained in the cone of $C_+$ and $C_-.$ So any element $g \in G(E)$ which fixes $A_K$ will stabilize both $B_+$ and $B_-$, from which it follows that $g \in T(E).$ Thus $Z(K)\leq T(E)$, which is abelian. 
\end{proof}

\subsection{Facts about isotropic reductive algebraic groups}
Let $G=G(k)$ be an almost split Kac--Moody group obtained via Galois descent. Let $\Omega$ a balanced subset of $\Delta_{cone}$ and let $M:=\Fix_{G(k)}(\Omega)$. Then $\Ad_\Omega(M)$ can be identified with the $k$-points of an algebraic group defined over $k$, and $M$ itself is a central extension of this group.\\
(The fact that $\Ad M$ is defined over $k$ is implied by the axioms that the adjoint representation be Galois equivariant and that $G(k)$ is obtained by Galois descent; this is one of the main motivations of introducing these two axioms.) \\
This is why we recall here some facts about 
$k$-rational points of algebraic groups. For the following facts see \cite{BorelTitsRG} or \cite[Section 1.2]{Deodhar}, where a convenient summary of the results we need is given.
Let $k$ be a field, $\bar{k}$ an algebraic closure of $k$ and $G$ a connected reductive linear algebraic group defined over $k$. For our purposes, we can assume that $G$ comes with a fixed embedding, i.e. $G$ is a Zariski-closed subgroup of some $\GL_n(\bar{k})$. \\
Let $S\leq G$ be a maximal $k$-split torus and $X^*(S)$ its character group. Suppose that $G$ is isotropic over $k$, i.e. $S$ is non-trivial.

Let $\Phi \subseteq X^*(S)$ be the corresponding $k$-root system of $G$ with respect to $S$, i.e. the set of weights of $S$ acting on $\lieg:=\Lie G$ via the adjoint representation.\\
For $\alpha \in \Phi$, let $\lieg_\alpha \subseteq \lieg$ denote the corresponding root space, i.e.
$$\lieg_\alpha=\{X \in \lieg: \Ad s (X)=\alpha(s)\cdot X \; \forall \; s \in S\}.$$
Let $\lieu_\alpha:=\sum_{k>0} \lieg_{k\alpha}$ and let $U_\alpha$ be the connected unipotent subgroup of $G$ with $\Lie U_\alpha=\lieu_\alpha.$ In fact, the only positive multiples of $\alpha$ which could possibly belong to $\Phi$ are $\alpha$ and $2\alpha$. 
The group $U_\alpha$ then is split over $k$, cf. \cite[Cor. 3.18]{BorelTitsRG} and normalized by the centralizer $Z:=C_G(S)$ of $S$ in $G$. \\
If $\alpha \in \Phi$ is such that $2\alpha \not\in \Phi$, then $U_2:=U_\alpha$ is $k$-isomorphic to a vectorspace ${\bf G}_a{}^n$.  
If $\alpha \in \Phi$ is such that $2\alpha \in \Phi$, then $U_1:=U_\alpha/U_{2\alpha}$ again is isomorphic over $k$ to a vectorspace. \\
In both cases, under this identification the action of $S$ on $U_1$ resp. $U_2$ is given via the homothety induced by $\alpha.$ This means that for $s \in S(k)$ and $u \in U_1(k)$ or $u \in U_2(k)$, we have
$$ s\cdot u \cdot s^{-1}=\alpha(s)\cdot u.$$

\subsection{Further properties of Kac--Moody $K$-groups.} For the rest of this paper, any almost split Kac--Moody groups is understood to be obtained via Galois descent.\\
We briefly recall the discussion of reductive $k$-subgroups of $G$ as given in \cite[12.5.2]{Remy} to make the interplay of the maximal split torus and the relative root groups of an almost split Kac--Moody group explicit. \\

Let $k$ be a field and let $G=G(k)$ be an almost split Kac--Moody $k$-group which splits over a separable extension $E\subseteq k^{sep}.$ Let $(A_k,F,-F)$ denote a rational standardisation. \\

By definition, $F$ and $-F$ are two minimal Galois-stable opposite spherical facets of the twin building associated to $G(E)$. The stabilizer of $\Omega:=F \cup -F$ in $G(E)$ can then be identified with a Levi factor $L^J(E):=T\langle U_\alpha : \alpha \in \Phi(W_J) \rangle$ where $J\subseteq S$ is spherical. From the defining relations of the constructive Tits functor, it follows that $L^J(E)$ is abstractly isomorphic to the $E$-points of a connected reductive group split over $E$. Since $L^J$ is invariant under the $\Gamma$-action, it follows that $L^J$ is defined over $k$. Write $Z$ for the algebraic group $L^J$ endowed with this $k$-structure.
So $Z(E)\cong L^J(E)$, while $Z(k)$ of course is in general very different from $L^J(k).$\\
For $\Omega$ as above, $\Ad_\Omega(Z)$ is a connected semisimple algebraic group defined over $k$ which is anisotropic over $k$. It follows that there exists a unique maximal $k$-split torus $T_d$ contained in $Z$. The torus $T_d$ is central in $Z$ and can be identified with a maximal $k$-split subtorus of $T$. \\

More generally, let $x \in A_k$ be a $k$-facet. Then for $\Omega:=x \cup -x$, the fixator of $\Omega$ in $G(k)$ can be identified with the $k$-rational points of some Levi factor $L^{J'}$ of $G(E)$, where the $k$-structure on $L^{J'}$ again is given by the $\Gamma$-action. (We dealt above with the case when $x$ is a $k$-chamber.)\\
The point here is that fixators of opposite points of a twin apartment carry an intrinsic structure of (the $k$-points of) an algebraic group. For bounded subgroups in general, though, one has to pass to the adjoint representation.\\ 

We combine this discussion with the review of rational points of algebraic groups in the previous subsection to sum up the interplay between the maximal split torus $T_d(k)$ and the root groups $V_a(k).$\\
Let ${\bf G}_a$ denote the algebraic group with ${\bf G}_a(k)=(k,+).$
For a group $G$, let $\mathscr Z(G)$ denote the center of $G$ (which should not be confused with the anisotropic kernel $Z$ of an almost split Kac-Moody group).

\begin{proposition} \label{TorusActsOnRootGroups} 
Let $k$ be an infinite field and let $G$ be an almost split Kac--Moody group obtained by Galois descent. Let $Z$ be the anisotropic kernel of $G$, $T_d \leq Z$ a maximal $k$-split torus
and $W_k$ the Weyl group of $G(k)$ with $S_k$ its set of canonical generators. Let $\Phi_k=\Phi(W_k,S_k)$ denote the set of $k$-roots and $(V_\alpha(k))_{\alpha \in \Phi_k}$ the set of root groups of $G(k)$ relative to $T_d(k)$. \\
Let $\Pi_k$ denote the set of simple roots of $\Phi_k.$
 
\begin{enumerate}
\item $Z$ is a connected reductive algebraic group defined over $k$. 
The torus $T_d$ is a maximal $k$-split torus of $Z$ which is central in $Z$; the derived group of $Z$ is anisotropic over $k$.
\item Let $J \subseteq S_k$ be such that $(W_k)_J$ is finite. Then 
$L^J:=Z\langle V_\alpha: \alpha \in \Phi((W_k)_J)\rangle$
is a connected reductive algebraic $K$-group, in which $T_d$ is a maximal $k$-split torus.
$L^J$ has split-semisimple rank $|J|$. 
\item Let $\alpha \in \Delta_k.$ Then $X_\alpha:=Z\langle V_\alpha, V_{-\alpha}\rangle$ is a connected reductive algebraic $k$-group 
of split-semisimple rank 1. 
$V_\alpha$ is a root group in $X_\alpha$ normalized by $Z$. There are two possibilities:
\begin{enumerate}
\item [i)]
$V_\alpha$ is abelian and is $k$-isomorphic to ${\bf G}_a^n$ for $n:=\dim V_\alpha.$ In this case, $V_\alpha$ is normalized by $Z$, and $T_d$ acts on $V_\alpha$ via a character $\alpha$. This means there is some $\alpha \in X^*(T_d)$ defined over $k$ such that $tut^{-1}=\alpha(t)\cdot u$ for $t \in T_d$ and $u \in V_\alpha.$
\item [(ii)]
$V_\alpha$ is metabelian. Then $\mathscr Z(V_\alpha)$ is $k$-isomorphic to ${\bf G}_a^n$, where $n:=\dim \mathscr Z(V_\alpha)$, and 
$V_\alpha/\mathscr Z(V_\alpha)$ is $k$-isomorphic to ${\bf G}_a^{n'}$, where $n':=\dim V_\alpha - \dim \mathscr Z(V_\alpha)$.\\
The anisotropic kernel $Z$ normalizes both $V_\alpha$ and $\mathscr Z(V_\alpha).$ There is a character $\alpha \in X^*(T_d)$ defined over $k$ such that $T_d$ acts on $\mathscr Z(V_\alpha)$ via $2\alpha$ and on $V_\alpha/\mathscr Z(V_\alpha)$ via $\alpha.$
\end{enumerate}
\item Let $u \in \mathscr Z(V_\alpha(k)) \backslash\{1\}$ and $s_\alpha:=m(u)=u'uu''$ the associated $\mu$-map. Then $s_\alpha$ normalizes $T_d(k).$
\item Let $\alpha \in \Phi_k.$ If $t \in T_d$ centralizes some $u \in V_\alpha\backslash\{1\}$, then $t^2$ already centralizes $V_\alpha.$
\item If $\alpha, \beta \in \Phi_k, \alpha \neq \pm \beta$ are such that $o(s_\alpha s_\beta)<\infty$, then there is an element $t \in T_d(k)$ such that $t$ centralizes $V_\alpha$ but not $V_\beta.$
\end{enumerate}
\end{proposition}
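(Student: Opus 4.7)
This proposition is essentially a compilation: parts (1)--(3) should be recovered from the general fact that fixators of balanced $\Gamma$-stable subsets are (centrally isogenous to) $k$-rational points of reductive algebraic $k$-groups, while (4)--(6) are elementary consequences of the explicit character-theoretic action of $T_d$ on the relative root groups obtained in (3). I would therefore devote the bulk of the argument to setting up the algebraic-group incarnations of $Z$, $L^J$, $X_\alpha$, and then dispose of the remaining parts briefly.

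\textbf{Parts (1)--(3).} I would first realise $Z = \Fix_{G(k)}(F \cup -F)$ as the fixator of a balanced $\Gamma$-stable subset in $\Delta_{cone}$. Proposition \ref{boundedsubspace} applied to the corresponding $\Ad$-invariant subspace $W_{F\cup -F}$, combined with the Galois equivariance of $\Ad$ from axiom (PA 1), shows that $\Ad_\Omega(Z)$ is the group of $k$-points of a connected reductive $k$-group; condition (DCS) ensures that the $k$-structure is the expected one. Minimality of $F$ as a $\Gamma$-stable spherical facet translates into anisotropy of the derived group over $k$, so $T_d$ must be central in $Z$ and maximal $k$-split there; this gives (1). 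For (2) I would repeat the argument with the balanced set $\Omega_J := F_J \cup -F_J$ obtained by enlarging $F,-F$ to the spherical $J$-residue, noting that the anisotropic kernel of $L^J$ is still $Z$, so $T_d$ remains a maximal $k$-split torus and the split-semisimple rank of $L^J$ is $|J|$. Part (3) is the special case $J=\{s_\alpha\}$, giving the rank-$1$ reductive $k$-group $X_\alpha$; the abelian/metabelian dichotomy, the $k$-isomorphisms with ${\bf G}_a^n$, and the fact that $T_d$ acts on the layers via $\alpha$ respectively $2\alpha$, are then quoted directly from the classification of root groups of rank-$1$ isotropic reductive $k$-groups recalled in the preceding subsection.

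\textbf{Parts (4)--(6).} For (4) I would observe that $u \in V_\alpha(k) \subseteq G(k)$ and that, by axiom (TRD 3) combined with the fact that both $V_\alpha$ and $V_{-\alpha}$ are $k$-defined, $u',u''$ can be chosen in $V_{-\alpha}(k)$; hence $m(u) \in G(k)$ and, normalising $T(E)$, it must normalise its $k$-split subtorus $T_d$. For (5) I would use (3) directly: in the abelian case, $tut^{-1} = \alpha(t)\,u$ with $u \neq 1$ forces $\alpha(t) = 1$, and $t$ itself already centralises $V_\alpha$; in the metabelian case, projecting $u$ either to $\mathscr Z(V_\alpha)$ or to $V_\alpha/\mathscr Z(V_\alpha)$ yields $\alpha(t)^2 = 1$, hence $\alpha(t^2) = 1 = 2\alpha(t^2)$, and since the $T_d$-action on $V_\alpha$ has kernel $\ker\alpha \cap \ker 2\alpha = \ker\alpha$, $t^2$ centralises $V_\alpha$. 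For (6), the hypothesis $o(s_\alpha s_\beta) < \infty$ together with $\alpha\neq \pm\beta$ and the reducedness of $\Phi_k = \Phi(W_k,S_k)$ places $\{s_\alpha, s_\beta\}$ inside a spherical $J \subseteq S_k$; by (2) we are in a reductive $k$-group of split-semisimple rank $2$, so $\alpha, \beta$ are linearly independent $k$-characters of $T_d$. Then $\ker\alpha$ is a $k$-split subtorus of positive dimension on which $\beta$ is nontrivial, and since $k$ is infinite and $T_d$ is $k$-split, $T_d(k)$ is Zariski-dense in $T_d$, so I can pick $t \in (\ker\alpha)(k)$ with $\beta(t) \neq 1$; this $t$ centralises $V_\alpha$ and acts nontrivially on $V_\beta$.

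\textbf{Expected obstacle.} Essentially everything is formal once one grants that the abstract fixator $\Fix_{G(k)}(\Omega)$ inherits a well-defined $k$-structure as a reductive algebraic group compatible with the $T_d$-action. This is the one step that is not purely combinatorial: it rests on axiom (PA 1) and on (DCS), and on carefully matching the $\Ad$-image with the Zariski closure in Proposition \ref{boundedsubspace}. Once this descent is in place for the spherical Levi subgroups, the rest of the proposition is a matter of reading off characters from (3) and invoking Zariski density of $T_d(k)$.
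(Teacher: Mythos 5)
Your treatment of parts (1)--(3), (5) and (6) is essentially the paper's own argument: (1)--(3) are read off from the identification of fixators of pairs of opposite $K$-facets with ($k$-points of) reductive $k$-groups and the classification of root groups of isotropic rank-1 groups recalled before the proposition, (5) is the same character computation on the layers $\mathscr Z(V_\alpha)$ and $V_\alpha/\mathscr Z(V_\alpha)$, and (6) is the same non-proportionality-of-characters plus Zariski-density argument (your version, passing through the split subtorus $(\ker\alpha)^0$, is if anything slightly more careful than the paper's).

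Part (4), however, has a genuine gap. You assert that $m(u)=u'uu''$ ``normalises $T(E)$'' and deduce that it normalises the $k$-split subtorus $T_d$. Nothing in the setup gives you normalisation of the absolute maximal torus $T(E)$: the axiom (TRD 3) for the rational twin root datum only tells you that $m(u)$ permutes the relative root groups $V_\beta$, hence normalises the anisotropic kernel $Z(k)=\bigcap_\beta N_{G(k)}(V_\beta)$, which in general properly contains $T_d(k)$ and is not contained in $T(E)$. Even the true statement ``$m(u)$ normalises $Z(k)$'' does not yield (4) for free: $T_d$ is canonical inside $Z$ only as an algebraic subgroup (the connected centre, resp.\ the unique maximal $k$-split torus), so one must know that conjugation by the abstract element $m(u)$ acts on $Z$ as a $k$-algebraic automorphism, and establishing this is exactly the content of the statement. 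The paper resolves this differently: since $u\in\mathscr Z(V_\alpha(k))$, the Zariski closure of $\{sus^{-1}:s\in T_d\}$ is a one-dimensional $k$-split unipotent subgroup of $V_\alpha$ normalised by $T_d$, so by the Borel--Tits theorem (Theorem \ref{BorelTitsSplit}) it lies in a split reductive subgroup $F\leq X_\alpha$ containing $T_d$; inside the split rank-1 group $F$ the $\mu$-map of $u$ visibly normalises $T_d$, and by uniqueness of the elements $u',u''$ in (TRD 3) this element coincides with $m(u)$ computed in $X_\alpha$. You need some argument of this kind (or an appeal to the corresponding Borel--Tits result on $N(S)(k)$ for isotropic reductive groups) in place of the unsupported claim about $T(E)$.
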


\begin{proof}
Part a) is clear by the above discussion; similarly, as $L^J$ is the fixator of two opposite points $x$, $-x$, for b) it is sufficient to check the statement about the semisimple rank of $L^J$, which follows from the fact that $\Ad_{x \cup -x}(L^J)$ is a semisimple group in which the $(V_\beta: \beta \in \Phi(W_k)_J)$ form a system of root groups in the algebraic sense.\\
Part c) follows from b) and the discussion of rational points of semisimple algebraic groups in the previous subsection.\\
For part d), note that by c) $X_\alpha$ is a reductive group with $T_d$ a maximal split torus. Then the Zariski closure of $\{s u s^{-1}: s \in T_d\}$ is a one-dimensional subgroup of $V_\alpha$, and so is part of a maximal split reductive subgroup $F \leq X_\alpha$ which contains $T_d$, as follows from the Borel--Tits theorem (see Theorem 5.1). As $m(u)$, computed in $F$, leaves $T_d$ invariant, so must $m(u)$, as computed in $X_\alpha.$\\
Part e follows from part c) by noting that if $V_\alpha$ is abelian, then necessarily $\alpha(t)=1$ (and so already $t$ must centralize $V_\alpha$). In case $V_\alpha$ is metabelian, if $u \in \mathscr Z(V_\alpha)$, then $2\alpha(t)=\alpha(t^2)=1$ (so $t^2$ centralizes $V_\alpha$), while 
if $u \not\in \mathscr Z(V_\alpha)$, then $\alpha(t)=1$, so $t$ already centralizes $V_\alpha.$\\
For part f) it follows from the assumption that $V_\alpha, V_\beta$ are contained in some Levi factor $L^J$ with $|J|=2.$ Since the characters associated to $\alpha$ and $\beta$ are not proportional, 
$C_{T_d}(V_\alpha)=\ker \alpha$ does not contain $C_{T_d}(V_\beta)=\ker \beta.$ As $T_d(k)$ is Zariski dense in $T_d$,
the claim follows. 
\end{proof}
\subsection{Restriction of scalars for Kac--Moody groups}
We give a class of examples of quasi-split Kac--Moody groups obtained by the classical process of restriction of scalars, cf. \cite[Section 2.1.2]{PlatonovRapinchuk}. These examples show that
an abstract isomorphism $\psi: G_1 \to G_2$ of two almost split Kac--Moody groups does not in general preserve the full parameter set $(\DD_i,E_i|K_i,\rho_i)$ attached 
to these groups.

\begin{proposition}\label{RestrictionScalars}
Let $k$ be a field and let $E|k$ be a finite Galois extension. Let $G$ be a split Kac--Moody group. Then there is a quasi-split Kac--Moody group $G'$ such that $G(E)$ is isomorphic to $G'(k).$
\end{proposition}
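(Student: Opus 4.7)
The plan is to produce $G'$ by the explicit recipe of R\'emy recalled in Section~\ref{RemyConstruction}: exhibit a Kac--Moody root datum $\mathcal{D}'$ together with a homomorphism $*\colon \Gamma \to \Aut(D_{A'})$ from the Galois group $\Gamma := \Gal(E|k)$ to the automorphism group of the associated Dynkin diagram, and define $G'(k)$ as the $\Gamma$-fixed point set of $\mathcal{G}_{\mathcal{D}'}(E)$ for the combined action.

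First I would ``amplify'' the datum $\mathcal{D}=(I,A,\Lambda,(c_i),(h_i))$ underlying $G$. Set $I' := I \times \Gamma$, let $A' \in \ZZ^{I'\times I'}$ be the block-diagonal generalized Cartan matrix consisting of $|\Gamma|$ copies of $A$ (so that $D_{A'}$ is the disjoint union of $|\Gamma|$ copies of $D_A$), take $\Lambda' := \bigoplus_{\gamma \in \Gamma} \Lambda$, and let $c_{(i,\gamma)}$ and $h_{(i,\gamma)}$ be the copies of $c_i$ and $h_i$ sitting in the $\gamma$-th summand. The regular action of $\Gamma$ on itself permutes the $\Gamma$-factor of $I'$ and preserves $A'$, hence furnishes the required diagram homomorphism $*\colon \Gamma \to \Aut(D_{A'})$; by construction this action lifts to $\Lambda'$ compatibly with the $(c_{(i,\gamma)})$ and $(h_{(i,\gamma)})$. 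Applying R\'emy's construction, the fixed point set $G'(k) := \mathcal{G}_{\mathcal{D}'}(E)^\Gamma$ is a quasi-split Kac--Moody $k$-group.

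Second I would identify this fixed point set with $G(E)$. Because $D_{A'}$ is a disjoint union of Dynkin diagrams, any two simple roots belonging to distinct components satisfy $a_{ij}=a_{ji}=0$, so the corresponding simple reflections commute and the prenilpotent pairs of roots live within single components. Inspecting the relations of the constructive Tits functor one obtains
\[
\mathcal{G}_{\mathcal{D}'}(E) \;\cong\; \prod_{\gamma \in \Gamma} \mathcal{G}_{\mathcal{D}}(E),
\]
and the combined $\Gamma$-action (Galois action on coefficients composed with permutation of the factors via $*$) sends $(g_\gamma)_{\gamma \in \Gamma}$ to $(\sigma(g_{\sigma^{-1}\gamma}))_{\gamma\in\Gamma}$. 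A fixed tuple therefore satisfies $g_{\sigma\gamma} = \sigma(g_\gamma)$; setting $\gamma=1$ shows that the tuple is determined by $g_1 \in \mathcal{G}_{\mathcal{D}}(E) = G(E)$, and conversely every $g \in G(E)$ gives the fixed tuple $(\gamma(g))_\gamma$. Projection onto the identity factor then yields the desired group isomorphism $G'(k) \xrightarrow{\sim} G(E)$.

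The main thing needing genuine verification is that the constructive Tits functor splits as a direct product on a disjoint union of Kac--Moody root data; this is essentially formal from the Steinberg-type presentation of $\mathcal{G}_{\mathcal{D}'}$ (the only relations linking simple root groups in different components are commutativity relations), but one has to check that no extra relations couple the factors through the torus, which is handled because $\Lambda'$ was taken to be the direct sum. Once that is in place, compatibility of the isomorphism with R\'emy's $*$-action construction is automatic, and the proposition follows.
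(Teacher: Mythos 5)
Your proposal is correct and follows essentially the same route as the paper: form the direct product of $|\Gamma|$ copies of $G$ indexed by $\Gamma$, let $\Gamma$ act by permuting the copies (this is the $*$-action on the disjoint-union Dynkin diagram), and identify the fixed points with the (twisted) diagonal copy of $G(E)$. The only difference is that you spell out the amplified root datum $\mathcal{D}'$ and verify that the constructive Tits functor decomposes as a product over the components, details the paper simply asserts; your version of the action, with the Galois twist on coefficients included, is the more careful formulation of the same argument.
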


\begin{proof}
Let $\Gamma:=\Gal(E|k), n:=|\Gamma|$ and let $G_0$ be the direct product of $n$ copies of $G$, indexed by the elements of $\Gamma.$ Define an action of $\Gamma$ on $G_0(E)$ by setting
$$ \gamma \cdot (g_{\sigma_1}, \ldots, g_{\sigma_n}):=(g_{\gamma\sigma_1},\ldots, g_{\gamma\sigma_n}).$$

Let $G'(k)$ denote the fixed point set of $\Gamma$ acting on $G_0(E)$. Then $G'(k)$ is precisely the diagonal subgroup of $G_0(E)$, which is isomorphic to $G(E)$. \\
It remains to be checked that this $\Gamma$-action is the $*$-action induced by a $\Gamma$-action on the Dynkin diagram of $G_0$, which allows to apply the results of \ref{RemyConstruction}. This is immediate, though, as the Dynkin diagram of $G_0$ is the disjoint union of $n$ copies of the Dynkin diagram of $G$, and $\Gamma$ permutes these copies. 
\end{proof}

\begin{remark} Let $E|k$ be a finite Galois extension and let $G$ be a connected almost simple $k$-group which is split over $k$. Then the group $G'(k)\cong G(E)$ provided by Proposition \ref{RestrictionScalars} is the group classically obtained by restriction of scalars. The isomorphism $\varphi\colon G(E) \to G'(k)$ is not covered by Borel--Tits's theory \cite{BorelTits} since $G'(k)$ is not absolutely almost simple. Indeed, in this theory one restricts to absolutely almost simple groups for precisely this reason. 
\end{remark}

\section{Maximal split subgroups}
\subsection{Split subgroups of groups of Kac--Moody type}
An almost split Kac--Moody group $G(k)$ obtained via Galois descent is by definition a subgroup of a split Kac--Moody group $\GG_\DD(E).$
On the other hand, we show in this section that $G(k)$ possesses a maximal split subgroup $F(k)$ of Kac--Moody type, i.e.\ a subgroup
endowed with a twin root datum which is locally split and intersects each root group $V_\alpha(k)$ of $G(k)$ non-trivially.  

\begin{example}
Let $k$ be a field and let $E|k$ be a separable extension of degree 2. Let $h: E^3 \to E$
be a Hermitian form of Witt index 1 with associated unitary group $\SU_3$, which can be thought of as an algebraic group defined over $k$. 
Then $\SU_3(k[t,t^{-1}])$ is an almost split Kac--Moody group obtained from the split Kac--Moody group $\SL_3(k[t,t^{-1}])$ via Galois descent, cf. Example \ref{SU3example}.\\
 On the other hand, there is an inclusion $\SL_2(k[t,t^{-1}]) \leq \SU_3(k[t,t^{-1}])$ as
 for the associated root groups $(V_\alpha(k)_{\alpha \in \Phi(W,S)})$
 of $\SU_3([k[t,t^{-1}])$ it follows that $\langle \mathscr Z(V_\alpha(k)): \alpha \in \Phi\rangle \cong  \SL_2(k[t,t^{-1}])$. \\ 
The twin building associated to $\SU_3(\Fq_q[t,t^{-1}])$ is a semi-regular twin tree with valencies $(1+q,1+q^3)$ in which the twin building associated to 
$\SL_2(\Fq_q[t,t^{-1}])$, a regular twin tree with valency $1+q$, embeds.
\begin{figure}[ht]
\centering
\includegraphics[scale=.7]{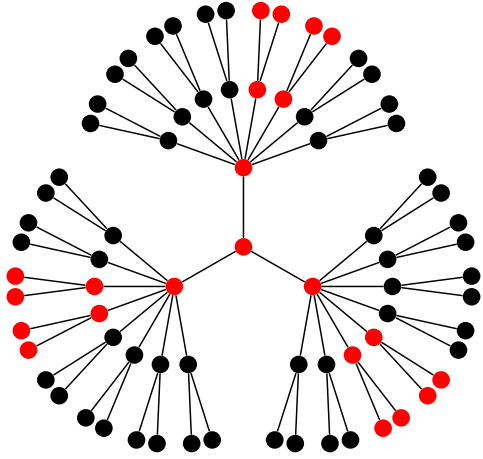}
\caption{One half of the twin tree associated to the inclusion $\SL_2(\Fq_2[t,t^{-1}]) \leq \SU_3(\Fq_2[t,t^{-1}])$} 
\end{figure}
\end{example}

\begin{example}\label{chainorthogonalgroups}
Let $k$ be a field of characteristic $\neq 2$, $n\geq 2$ and let $q=\langle a_1, \ldots, a_n\rangle$ be a quadratic form of Witt index 1 over $k$. We may assume that $\langle a_1, a_2 \rangle = \langle 1,-1 \rangle$ and that $\langle a_3, \ldots, a_n \rangle$ is anisotropic. Let $G:=\SO(q)$ denote the associated special orthogonal group.\\
For $r=2, \ldots, n$, let $q_r:=\langle a_1, \ldots, a_r \rangle$ denote the truncated quadratic form and let $G_{q_r}:=\SO(q_r)$ denote the associated special orthogonal group. Note in passing that $T:=G_{q_2}(k) \cong k^\times$ and $G_{q_3}(k) \cong \PGL_2(k)$ -- this follows from the fact that $G_{q_3}$ is a split three-dimensional semisimple group, so it is either isomorphic to $\SL_2$ or $\PGL_2$, and these groups can be distinguished by the torus action on the root groups.\\
The point of this example is that there is a chain of reductive $k$-groups 
$$T=G_{q_2} \leq G_{q_3} \leq \ldots \leq G_{q_n}=G$$
which share the same maximal torus $T=G_{q_2}$ of $G$. While $G_{q_3}$ is split and contains the maximal split torus $T$, clearly it is not the only subgroup of $G_{q_n}$ with this property -- any $G_i':=\SO(\langle 1,-1,a_i\rangle)$ for some $i \in \{4, \ldots, n\}$ has the same property, and $G_{q_3},G_i'$ are not conjugate over $k$ if
$a_3a_i^{-1} \not\in k^2.$
\end{example}

%

The following is a classical result by Borel-Tits (\cite[Theorem 7.2]{BorelTitsRG}).

\begin{theorem}\label{BorelTitsSplit}
Let $G$ be a connected reductive $k$-group. Let $S\leq G$ be a maximal $k$-split torus, $\Phi=\Phi(S,G)$ 
the system of $k$-roots of $G$ and $\Phi'\subseteq \Phi$ the set of non-multipliable roots. Let
$\Pi$ be a set of simple roots of $\Phi'$ and for each $\alpha \in \Pi$ let $E_\alpha\leq U_\alpha$ be a $k$-subgroup
which is normalized by $S$ and is $k$-isomorphic to {\bf G}$_a$. Then there is a unique connected $k$-split reductive $k$-subgroup $F$ which contains $S\cdot \langle E_\alpha: \alpha \in \Pi \rangle.$
\end{theorem}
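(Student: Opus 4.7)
The plan is to proceed by first treating the rank-one case for each simple root separately, then gluing the resulting subgroups together via $S$.

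\textbf{Rank-one step.} For each $\alpha \in \Pi$, I would consider the semisimple rank-one $k$-subgroup $G_\alpha := [Z_G(T_\alpha), Z_G(T_\alpha)]$, where $T_\alpha := (\ker \alpha)^\circ \leq S$. Since $E_\alpha \cong \mathbf{G}_a$ is a $k$-subgroup of the positive $k$-root group of $G_\alpha$, the group $G_\alpha$ is $k$-isotropic. Using the Bruhat decomposition of $G_\alpha(k)$ relative to the split torus $(S/T_\alpha)$, I choose $u \in E_\alpha(k) \setminus \{1\}$ and form $m(u) = u' u u''$ with $u', u'' \in U_{-\alpha}(k)$; this produces a canonical one-dimensional $k$-split subgroup $E_{-\alpha} \leq U_{-\alpha}$. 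The group $F_\alpha := \langle E_\alpha, E_{-\alpha} \rangle \cdot (S \cap G_\alpha)^\circ$ is then a connected $k$-split reductive subgroup of $G_\alpha$ of split semisimple rank one, isomorphic over $k$ to $\SL_2$ or $\PGL_2$.

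\textbf{Gluing step.} I then set $F := \langle S, F_\alpha : \alpha \in \Pi \rangle$. This is a connected $k$-subgroup of $G$ that is normalized by $S$ (since each $F_\alpha$ is). To show $F$ is reductive and $k$-split with maximal split torus $S$, the key point is that the reflections $s_\alpha := m(u_\alpha) \in F_\alpha(k)$ generate (a lift of) the Weyl group $W' = W(\Phi')$, and conjugating the $E_\alpha$ by these reflections produces one-dimensional $k$-split subgroups $E_\beta \subseteq U_\beta$ for every non-multipliable $\beta \in \Phi'$. These together with $S$ satisfy the axioms of a split root datum for $F$, yielding the structure of a connected split reductive $k$-group on $F$ whose root system is $\Phi'$.

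\textbf{Uniqueness.} Any connected $k$-split reductive $k$-subgroup $F'$ containing $S \cdot \langle E_\alpha : \alpha \in \Pi\rangle$ must have $S$ as a maximal $k$-split torus (by maximality of $S$ in $G$), and its root group relative to $S$ for the simple root $\alpha$ must be a connected one-dimensional $k$-split subgroup of $U_\alpha$ containing $E_\alpha$, hence equal to $E_\alpha$ itself. The negative simple root groups of $F'$ are determined by applying the $\mu$-map inside the $\SL_2$ or $\PGL_2$ of $F'$ associated to $\alpha$, and since a connected split reductive $k$-group is generated by its maximal split torus and its simple root groups (positive and negative), $F'$ is uniquely determined; in particular $F' = F$.

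\textbf{Main obstacle.} The subtle point is verifying that the group $F$ generated in the second step is genuinely reductive (rather than merely an abstract subgroup containing split tori and unipotent pieces). This reduces, by passing to Levi subgroups $Z_G(T_{\alpha,\beta})$ of $k$-rank two, to controlling the commutator structure $[E_\alpha, E_\beta]$ for pairs $\alpha, \beta \in \Pi$: one must check that these commutators lie in $\langle E_\gamma : \gamma \in (\alpha, \beta) \cap \Phi' \rangle$, which can be verified by invoking the classification of rank-two isotropic reductive $k$-groups and the analogous statement for their split analogues. Once this Chevalley-type commutation holds, the standard Steinberg/Tits machinery yields the reductive $k$-split structure on $F$.
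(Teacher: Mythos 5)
There is a genuine gap at the crux of your argument. Note first that the paper does not actually prove this statement: it is quoted as a classical result of Borel--Tits \cite[Theorem 7.2]{BorelTitsRG}, and the only proof in the paper reflecting its method is that of Theorem \ref{subbuilding}, which the author explicitly models on Borel--Tits's proof. That proof runs along a different track from yours: one sets $E_{-\alpha}:={}^{s_\alpha}E_\alpha$ with $s_\alpha=m(u)$, $u\in E_\alpha^*$, puts $V:=\langle E_\alpha:\alpha\in\Pi\rangle$ and $M:=\langle S, s_\alpha:\alpha\in\Pi\rangle$, proves structural lemmas involving only the simple root subgroups ($V=E_\alpha\ltimes E_\alpha'$, ${}^{w}E_\alpha\leq V$ whenever $w\alpha>0$, and $E_{-\alpha}\subseteq SE_\alpha\cup SE_\alpha s_\alpha E_\alpha$), and from these deduces a Bruhat decomposition $F=VMV$; the split reductive structure of $F$ and the identification of $F\cap U_\gamma$ with the conjugated one-parameter groups are extracted from that decomposition. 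No Chevalley commutator relations for general pairs of roots and no classification results are invoked anywhere.

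Your outline, by contrast, rests on two assertions that are not proved and cannot be discharged as you indicate. First, the rank-one step: that $\langle E_\alpha,{}^{m(u)}E_\alpha\rangle\cdot(S\cap G_\alpha)^\circ$ is a closed, connected, split reductive subgroup isomorphic to $\SL_2$ or $\PGL_2$ is exactly the rank-one case of the theorem; you state it as if it were automatic. (Elsewhere in the paper such rank-one split subgroups are produced by Jacobson--Morozov-type arguments, but those are characteristic-0 arguments, whereas the statement here carries no characteristic hypothesis.) Second, and more seriously, the step you yourself flag as the main obstacle: you propose to verify $[E_\alpha,E_\beta]\leq\langle E_\gamma:\gamma\in(\alpha,\beta)\cap\Phi'\rangle$ by ``invoking the classification of rank-two isotropic reductive $k$-groups''. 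There is no usable such classification -- the anisotropic kernel is arbitrary -- and after reducing to the rank-two Levi $Z_G(T_{\alpha,\beta})$ what you need is precisely the rank-two instance of the theorem being proved, so the argument is circular exactly where the work lies. Even granting the commutation relations, the appeal to ``standard Steinberg/Tits machinery'' to endow the abstract subgroup $F=\langle S,F_\alpha\rangle$ with a split reductive structure needs an argument (one must know, for instance, that $F$ is closed and that $F\cap U_\gamma$ is exactly the one-parameter subgroup you constructed); in Borel--Tits, and in Theorem \ref{subbuilding} of this paper, this information comes out of the decomposition $F=VMV$, which your outline never establishes. Your uniqueness paragraph, on the other hand, is essentially sound.
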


We prove a generalization of this result for a group $G$ endowed with a 2-spherical root datum, which might be of independent interest as it provides ``many'' sub-twin buildings of the twin building associated to $G$. In our context, it will be used to construct a regular diagonalizable subgroup $H \leq G$ which is mapped under any isomorphism $\varphi\colon G \to G'$ again to a regular diagonalizable subgroup (cf. section 5).\\

In a first step we define the necessary ingredients of a locally split subgroup and then go on to prove that these ingredients ``integrate'' to a locally split group of Kac--Moody type.\index{Coxeter group!2-spherical}\\
Recall that a Coxeter group $W=\langle (s_i)_{i \in I} : (s_is_j)^{m_{ij }}=1 \rangle$ is said to be {\bf 2-spherical} if $m_{ij}<\infty$ for all $i,j \in I.$

For elements $x,y \in G$ write ${}^y x:=yxy^{-1}.$ For a group $G$, let $G^*:=G \backslash \{1\}.$

%
%
%

\begin{definition}
Let $W=\langle (s_i)_{i \in I} : (s_is_j)^{m_{ij}}=1 \rangle$ be a 2-spherical Coxeter group and let $G$ be a group endowed with a twin root datum $(H,(U_\alpha)_{\alpha \in \Phi(W,S)})$. \\
Let $\Pi=\{\alpha_1,\ldots, \alpha_n\}$ denote the set of positive simple roots.\\
Let $T_d$ be a subgroup of $H$ and for each $\alpha \in \Pi$ let $E_\alpha \leq U_\alpha$ be a non-trivial subgroup. For $\alpha \in \Pi$, let $s_\alpha:=m(v)$ for some $v \in E_\alpha^*$. 

Then $(T_d,(E_\alpha)_{\alpha \in \Delta})$ is called a {\bf basis for a root subdatum} if the following conditions are satisfied: \index{basis for a root subdatum}

\begin{enumerate}[(RSD 1)]
\item For all $i,j$, $(s_{\alpha_i}s_{\alpha_j})^{m_{ij}} \in T_d$.
\item For all $r,t \in  E_\alpha^*$, $m(r)m(t)^{-1} \in T_d$. 
\item For all $\alpha \in \Pi$, $E_\alpha$ is normalized by $T_d$ and each $s_\alpha$ normalizes $T_d$.
\item For $v \in E_\alpha^*$ there exist $v_1, v_2 \in E_\alpha^*$ such that $m(v)(:=v'vv'')={}^{s_{\alpha}}v_1 \cdot v \cdot {}^{s_{\alpha}}v_2.$
\item If $X \leq U_{(\alpha,\beta]}$ is a subgroup normalized by $T_d$
and $x=u_1u_2 \in X$ with $u_1 \in U_{(\alpha,\beta)},u_2 \in U_\beta$, then $u_1,u_2 \in X$.

\end{enumerate}
\end{definition}

As the name suggests, a basis for a root subdatum gives rise to a subgroup which has a twin root datum.

\begin{theorem}\label{subbuilding} Let $(W,S)$ be a 2-spherical Coxeter group, let $\Phi=\Phi(W,S)$ denote the set of its roots and let $\Pi$ be the set of simple roots.

Let $G$ be a group endowed with a twin root datum $(H,(U_\alpha)_{\alpha \in \Phi(W,S)})$. Let
$(T_d,(E_\alpha)_{\alpha \in \Pi})$ be a basis for a root subdatum.\\
Let $M:=T_d \langle s_\alpha: \alpha \in \Pi\rangle$,  $V:=\langle E_\alpha: \alpha \in \Pi \rangle$ and $F:=\langle M,V\rangle.$ Set $F_\gamma:=F \cap U_\gamma$ for $\gamma \in \Phi.$ \\
Then $(T_d,(F_\gamma)_{\gamma \in \Phi})$ is a twin root datum of type $(W,S)$ for $F$.

\end{theorem}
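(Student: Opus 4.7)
The plan is to verify the four axioms (TRD 1)--(TRD 4) directly, after setting $F_\gamma := F \cap U_\gamma$ intrinsically and locating enough elements inside each such group. The engine is that (RSD 1) lets me treat the $s_\alpha$ as a Coxeter-type generating set in $M$ modulo $T_d$, so the conjugation action of $M$ on the ambient $U_\gamma$'s matches (via (RSD 3), together with the fact that $T_d$ normalizes every $U_\gamma$ because $T_d\leq H$) the $W$-action on the index set $\Phi$. Consequently, for $\gamma=w\alpha$ with $\alpha\in\Pi$, lifting $w$ to $\tilde w\in M$ gives ${}^{\tilde w}E_\alpha\subseteq {}^{\tilde w}U_\alpha\cap F=U_{w\alpha}\cap F=F_\gamma$ by (TRD 3) in $G$, so in particular each $F_\gamma$ is nontrivial.

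Next I would dispose of (TRD 1), (TRD 3) and (TRD 4) together. By (RSD 4), every $m(v)$ with $v\in E_\alpha^{*}$ can be rewritten as a product in $F_{-\alpha}F_\alpha F_{-\alpha}$; hence $s_\alpha\in\langle F_{\pm\alpha}\rangle$, and since $F=\langle M,V\rangle$ with $M=T_d\langle s_\alpha\rangle$ and $V=\langle E_\alpha:\alpha\in\Pi\rangle$, we obtain $F=T_d\langle F_\gamma:\gamma\in\Phi\rangle$, i.e.\ (TRD 1). For (TRD 3), the same $m(v)=v'vv''$ from (RSD 4) lies in $F$, conjugates $U_\beta$ onto $U_{s_\alpha\beta}$ by (TRD 3) in $G$, and so restricts to a bijection $F_\beta\to F_{s_\alpha\beta}$; the coset identity $m(v)T_d=m(v')T_d$ is exactly (RSD 2). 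Finally (TRD 4) is immediate from $F_{\pm\alpha}\subseteq U_{\pm\alpha}$ and the corresponding property of $G$.

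The serious work is (TRD 2). Fix a prenilpotent pair $\{\alpha,\beta\}$; we already have $[F_\alpha,F_\beta]\subseteq[U_\alpha,U_\beta]\subseteq U_{(\alpha,\beta)}$ from $G$, so the only issue is to see that every element of $[F_\alpha,F_\beta]$ decomposes as a product in the $F_{\gamma_i}$'s rather than merely in the ambient $U_{\gamma_i}$'s. My plan is to enumerate $(\alpha,\beta)=\{\gamma_1,\ldots,\gamma_k\}$ in a convex prenilpotent order so that $\gamma_k$ is extremal and the groups $U_{(\alpha,\gamma_j]}$ form an ascending chain. Setting $X:=[F_\alpha,F_\beta]$ (which is $T_d$-normalized, since $T_d$ normalizes $F_\alpha$ and $F_\beta$) and writing $x\in X$ as $x'\cdot u_k$ via the ambient product decomposition $U_{(\alpha,\gamma_k]}=U_{(\alpha,\gamma_k)}\cdot U_{\gamma_k}$, axiom (RSD 5) gives $u_k\in X\cap U_{\gamma_k}\subseteq F\cap U_{\gamma_k}=F_{\gamma_k}$ and $x'\in X\cap U_{(\alpha,\gamma_k)}$. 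The latter is again $T_d$-normalized, so downward induction on $k$ exhibits every $x$ as a product of elements in the $F_{\gamma_j}$, hence in $F_{(\alpha,\beta)}=\langle F_\gamma:\gamma\in(\alpha,\beta)\rangle$.

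The main obstacle is the combinatorial setup of this induction: one has to pick the enumeration of $(\alpha,\beta)$ so that at every stage the truncated interval genuinely has the shape $(\alpha,\gamma_j]$ demanded by (RSD 5), and so that the product decomposition used to peel off the top component at stage $j$ is compatible with the one used at stage $j-1$. Axiom (RSD 5) is precisely engineered to let this peeling stay inside $F$ without losing the $T_d$-normalization hypothesis, but matching convex orders on prenilpotent intervals to the successive extractions is the bookkeeping burden of the argument.
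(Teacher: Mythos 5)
Your proposal has a genuine gap: you never determine the groups $F_\gamma=F\cap U_\gamma$, and this is exactly where the paper's proof spends almost all of its effort. You only establish the inclusion ${}^{\tilde w}E_\alpha\subseteq F_\gamma$, which gives nontriviality, but axiom (TRD 3) quantifies over \emph{every} $u\in F_{\alpha_s}\setminus\{1\}$: for each such $u$ one needs $u',u''\in F_{-\alpha_s}$ with $m(u)=u'uu''$ conjugating each $F_\beta$ onto $F_{s\beta}$, and moreover $m(u)T_d=m(v)T_d$ for all $u,v\in F_{\alpha_s}\setminus\{1\}$. The hypotheses (RSD 2) and (RSD 4) provide this only for elements of $E_{\alpha_s}$, so your verification of (TRD 3) is valid only if $F\cap U_{\alpha_s}=E_{\alpha_s}$ --- and, as the paper explicitly remarks, this equality is ``not clear a priori'': the group $F=\langle M,V\rangle$ could in principle meet $U_{\alpha_s}$ in elements outside $E_{\alpha_s}$. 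Proving $F_\gamma={}^{\tilde w}E_\alpha$ is the heart of the paper's argument: it requires the commutation Lemma \ref{posnegcommute}, the semidirect decomposition $V=E_\alpha\ltimes E_\alpha'$ of Lemma \ref{StructureOfV} (which is where (RSD 5) is actually used), the identification $M/T_d\cong W$ and the statement ${}^wE_\alpha\leq V$ for $w\alpha>0$ of Lemma \ref{StructureOfW}, and finally the Bruhat decomposition $F=VMV$ of Lemma \ref{BruhatForF}, from which $F\cap U_\gamma=V\cap U_\gamma$ and then $F_\gamma=E_\gamma$ are extracted. None of this machinery appears in your proposal, so the proof is incomplete at its central point.

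A secondary problem concerns your route to (TRD 2). You apply (RSD 5) to intervals $(\alpha,\gamma_j]$ in which $\gamma_j$ is in general not a simple root, and (TRD 2) must moreover be checked for arbitrary prenilpotent pairs of roots of $\Phi$, not only pairs of simple roots; but (RSD 5) is stated in the context of, and used in the paper only for, pairs of distinct simple roots (Lemma \ref{StructureOfV}), and in the almost split Kac--Moody application it is verified only in that rank-two setting. Your peeling induction therefore relies on a strengthening of the hypothesis that you have not justified, in addition to the ordering and bookkeeping issues you yourself flag. The paper sidesteps all of this by first pinning down $F_\gamma=E_\gamma$ (so every $F_\gamma$ is an $M$-conjugate of some $E_\alpha$) and only then reading off (TRD 2) from the commutator property of $G$ together with the definition of the $E_\gamma$.
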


The proof, which will be given after a couple of preparatory lemmas, 
is very much inspired by \cite[Proof of Theorem 7.2]{BorelTitsRG}.

\begin{lemma} \label{posnegcommute}
 Let $G$ be a group endowed with a twin root datum $(H,(U_\alpha)_{\alpha \in \Phi(W,S)})$. Let $\alpha, \beta$ be two distinct positive simple roots. Then $U_{-\alpha}$ commutes with $U_\beta.$
\end{lemma}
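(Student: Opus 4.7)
The plan is to deduce the statement directly from axiom (TRD 2) by showing that $\{-\alpha,\beta\}$ is a prenilpotent pair whose open interval $(-\alpha,\beta)$ is empty; then $U_{(-\alpha,\beta)}=\{1\}$ and hence $[U_{-\alpha},U_\beta]=1$.

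First I will verify prenilpotence. The element $s_\alpha$ sends $-\alpha$ to $\alpha>0$, and since $\beta$ is a simple root distinct from $\alpha$, the standard fact that a simple reflection permutes the positive roots other than its own gives $s_\alpha\beta\in\Phi^+$. Hence $s_\alpha\cdot\{-\alpha,\beta\}\subseteq\Phi^+$. Symmetrically $s_\beta\cdot\{-\alpha,\beta\}\subseteq\Phi^-$, so the pair is prenilpotent.

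Next I will show $[-\alpha,\beta]=\{-\alpha,\beta\}$ by a short chamber argument. Using the identification $C_w\in\gamma\Leftrightarrow w^{-1}\gamma>0$, the chamber $C_{s_\alpha}$ lies in $(-\alpha)\cap\beta$ (because $s_\alpha^{-1}(-\alpha)=-\alpha<0$ and $s_\alpha^{-1}\beta=s_\alpha\beta>0$) and, symmetrically, $C_{s_\beta}\in\alpha\cap(-\beta)$. If $\gamma$ belongs to the closed interval $[-\alpha,\beta]$, the condition $\gamma\supseteq(-\alpha)\cap\beta$ applied to $C_{s_\alpha}$ forces $s_\alpha\gamma>0$, and the condition $-\gamma\supseteq\alpha\cap(-\beta)$ applied to $C_{s_\beta}$ forces $s_\beta\gamma<0$. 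A two-case analysis on the sign of $\gamma$ then pins it down: if $\gamma\in\Phi^+$, then $s_\alpha\gamma>0$ rules out $\gamma=\alpha$ while $s_\beta\gamma<0$ forces $\gamma=\beta$; if $\gamma\in\Phi^-$, write $\gamma=-\delta$ with $\delta\in\Phi^+$, and $s_\alpha\gamma>0$ becomes $s_\alpha\delta<0$, forcing $\delta=\alpha$, so $\gamma=-\alpha$ (and $s_\beta(-\alpha)<0$ holds automatically). Thus $[-\alpha,\beta]=\{-\alpha,\beta\}$ and $(-\alpha,\beta)=\emptyset$.

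Finally, axiom (TRD 2) yields $[U_{-\alpha},U_\beta]\subseteq U_{(-\alpha,\beta)}=\langle\varnothing\rangle=\{1\}$, so $U_{-\alpha}$ and $U_\beta$ commute. There is no real obstacle: the only point requiring care is the translation between the half-space description of a root and the chamber-theoretic criterion $C_w\in\gamma\Leftrightarrow w^{-1}\gamma\in\Phi^+$, which is needed to turn the geometric interval into the two linear inequalities $s_\alpha\gamma>0$, $s_\beta\gamma<0$.
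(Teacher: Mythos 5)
Your proof is correct and takes essentially the same route as the paper's: prenilpotence of $\{-\alpha,\beta\}$ witnessed by $s_\alpha$ and $s_\beta$, emptiness of the open interval $(-\alpha,\beta)$ via the sign conditions $s_\alpha\gamma>0$ and $s_\beta\gamma<0$, and then axiom (TRD~2); you merely spell out the chamber-theoretic justification that the paper leaves implicit. (One harmless slip: with your criterion $C_w\in\gamma\Leftrightarrow w^{-1}\gamma>0$, the parenthetical should read $s_\alpha^{-1}(-\alpha)=\alpha>0$, which is exactly what puts $C_{s_\alpha}$ in $-\alpha$.)
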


\begin{proof}
The set $\Psi:=\{-\alpha,\beta\}$ is a prenilpotent set of roots since $s_\beta \Psi \subseteq \Phi^-$ and $s_\alpha \Psi \subseteq \Phi^+$. The open root interval $(-\alpha,\beta)$ is empty: Any positive root in $[-\alpha,\beta]$ must be mapped to a negative root by $s_\beta$ and hence coincides with $\beta$, while any negative root in $[-\alpha,\beta]$ must be mapped to a positive one by $s_\alpha$ and hence coincides with $-\alpha.$ 
By the commutator axiom, $[U_{-\alpha},U_{\beta}] \leq U_{(-\alpha,\beta)}=1.$
\end{proof}

We first analyze the structure of $V$.\\
Let $E_{-\alpha}:={}^{s_{\alpha}}E_{\alpha}.$ Then $E_{-\alpha}$ is independent from the choice of $v \in E_\alpha^*$ in the definition of $s_{\alpha}=m(v)$ as for $v,v' \in E_\alpha^*$,
$m(v)$ and $m(v')$ differ by an element of $T_d$ by (RSD 2), and $T_d$ normalizes $E_\alpha$ by (RSD 3).\\
For $\alpha, \beta \in \Pi$ let $E_{(\alpha,\beta)}:=[E_\alpha,E_\beta]$ denote the commutator subgroup. Then $E_{(\alpha,\beta)}$ is normalized by $E_\beta$, since for $a \in E_\alpha,b,c \in E_\beta$, $$c[a,b]c^{-1}=caba^{-1}b^{-1}c^{-1}=[c,a][a,cb].$$ 
Let $E_{(\alpha,\beta]}:=E_{(\alpha,\beta)}\cdot E_\beta.$
\begin{lemma} \label{StructureOfV}

Let $\alpha, \beta \in \Pi$ be two distinct positive roots.
\begin{enumerate}
\item
$E_{(\alpha,\beta]}=\langle u_\alpha u_\beta u_\alpha^{-1}: u_\alpha \in E_\alpha, u_\beta \in E_\beta, u_\alpha \neq 1 \rangle.$
\item 
 $E_{(\alpha,\beta]}$ is normalized by $s_\alpha.$
\item
Let $E_\alpha':=\langle E_{(\alpha,\gamma]}: \gamma \in \Pi, \gamma \neq \alpha\rangle.$ Then $V=E_\alpha \ltimes E_\alpha'.$
\end{enumerate}
\end{lemma}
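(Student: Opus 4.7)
The plan is to establish (a) directly from the axioms (TRD) and (RSD), then use (a) to attack (b), and finally derive (c) from the normalization relations obtained along the way.

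For part (a), let $H$ denote the right-hand side. The inclusion $H \subseteq E_{(\alpha,\beta]}$ is immediate from the identity $u_\alpha u_\beta u_\alpha^{-1} = [u_\alpha, u_\beta] \cdot u_\beta$, since $[u_\alpha, u_\beta] \in E_{(\alpha,\beta)}$. For the reverse inclusion, I would observe that $H$ is normalized by $T_d$, because by (RSD 3) the $T_d$-conjugate of any generator $u_\alpha u_\beta u_\alpha^{-1}$ is again a generator, and that $H \subseteq U_{(\alpha,\beta]}$ by (TRD 2). Applying (RSD 5) to $X = H$ and to the decomposition $u_\alpha u_\beta u_\alpha^{-1} = [u_\alpha, u_\beta] \cdot u_\beta$ with $[u_\alpha, u_\beta] \in U_{(\alpha,\beta)}$ and $u_\beta \in U_\beta$, I extract $E_\beta \subseteq H$ and $E_{(\alpha,\beta)} = [E_\alpha, E_\beta] \subseteq H$, so that $E_{(\alpha,\beta]} = E_{(\alpha,\beta)} \cdot E_\beta \subseteq H$.

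For part (b), by (a) it suffices to show that $s_\alpha$ sends every generator $u_\alpha u_\beta u_\alpha^{-1}$ back into $E_{(\alpha,\beta]}$. I would write $s_\alpha = m(v_0) = w_1 v_0 w_2$ using (RSD 4), with $w_1, w_2 \in E_{-\alpha}$ and $v_0 \in E_\alpha^*$. Since $U_{-\alpha}$ commutes with $U_\beta$ by Lemma \ref{posnegcommute}, the factor $w_2$ slides through $u_\beta$, and a short manipulation collapses $s_\alpha(u_\alpha u_\beta u_\alpha^{-1})s_\alpha^{-1}$ to $z\,(v_0 u_\beta v_0^{-1})\,z^{-1}$ for some $z \in E_{-\alpha}$; the inner factor $v_0 u_\beta v_0^{-1}$ is one of the generators of $E_{(\alpha,\beta]}$ produced in (a). The main obstacle is thus the residual statement that $E_{-\alpha}$ normalizes $E_{(\alpha,\beta]}$. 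I would handle this by forming the $T_d$-stable overgroup $X := \langle E_{(\alpha,\beta]},\, w E_{(\alpha,\beta]} w^{-1} : w \in E_{-\alpha}\rangle$, noting that $X \leq U_{(\alpha,\beta]}$ because $[U_{-\alpha}, U_\gamma] \subseteq U_{(-\alpha,\gamma)} \subseteq U_{(\alpha,\beta]}$ for every $\gamma \in (\alpha,\beta]$ in the 2-spherical rank-2 subsystem generated by $\alpha$ and $\beta$, and then using iterated (RSD 5) together with $[E_{-\alpha}, E_\beta] = 1$ to match the root-group components of $X$ with those of $E_{(\alpha,\beta]}$, giving $X = E_{(\alpha,\beta]}$.

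For part (c), I would verify the three requirements of an internal semidirect product. Since $E_\gamma \subseteq E_{(\alpha,\gamma]} \subseteq E_\alpha'$ for each $\gamma \in \Pi \setminus \{\alpha\}$, we have $V = \langle E_\alpha, E_\alpha'\rangle$. For normality, the generating-set description from (a) shows that $E_\alpha$ normalizes each $E_{(\alpha,\gamma]}$: for $u_\alpha \in E_\alpha$ and a generator $u_\alpha' u_\gamma u_\alpha'^{-1}$, the conjugate $(u_\alpha u_\alpha')u_\gamma(u_\alpha u_\alpha')^{-1}$ is again a generator, or else lies in $E_\gamma \subseteq E_{(\alpha,\gamma]}$ in the degenerate case $u_\alpha u_\alpha' = 1$. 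Hence $E_\alpha' \triangleleft V$ and $V = E_\alpha \cdot E_\alpha'$. Finally, the standard semidirect decomposition $U_+ = U_\alpha \ltimes \langle U_\delta : \delta \in \Phi^+ \setminus \{\alpha\}\rangle$ in groups with twin root datum forces the triviality $E_\alpha \cap E_\alpha' \subseteq U_\alpha \cap \langle U_\delta : \delta \neq \alpha\rangle = \{1\}$.
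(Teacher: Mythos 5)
Parts (a) and (c) are fine: (a) is exactly the paper's argument, and (c) differs only in that you quote the ambient decomposition $U_+=U_\alpha\ltimes\langle U_\delta:\delta\in\Phi^+\setminus\{\alpha\}\rangle$ to kill the intersection, whereas the paper applies part (b) to get ${}^{s_\alpha}(E_\alpha\cap E_\alpha')\subseteq E_{-\alpha}\cap E_\alpha'\subseteq U_-\cap U_+=1$; both work.

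The problem is part (b). Your reduction is correct: writing $s_\alpha=w_1v_0w_2$ with $w_1,w_2\in E_{-\alpha}$ and using $[E_{-\alpha},E_\beta]=1$ does collapse $s_\alpha(u_\alpha u_\beta u_\alpha^{-1})s_\alpha^{-1}$ to $z\,(v_0u_\beta v_0^{-1})\,z^{-1}$ with $z={}^{s_\alpha}(u_\alpha)\cdot w_1\in E_{-\alpha}$. But the residual claim that $E_{-\alpha}$ normalizes $E_{(\alpha,\beta]}$ is essentially equivalent in strength to (b) itself (modulo the Bruhat-type decomposition $E_{-\alpha}\subseteq T_dE_\alpha\cup T_dE_\alpha s_\alpha E_\alpha$, any proof of one gives the other), and your proposed argument for it does not close. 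Axiom (RSD 5) only lets you split an element of a $T_d$-stable subgroup $X\leq U_{(\alpha,\beta]}$ into its $U_{(\alpha,\beta)}$-component and its $U_\beta$-component and conclude that \emph{both lie in $X$}; it gives no mechanism for forcing those components back into $E_{(\alpha,\beta)}$ or $E_\beta$. So for your overgroup $X=\langle {}^wE_{(\alpha,\beta]}:w\in E_{-\alpha}\rangle$ the "matching of root-group components" proves nothing about $X$ being no larger than $E_{(\alpha,\beta]}$ — that is precisely the content you are trying to establish, and the argument is circular. The paper sidesteps this entirely by exploiting the freedom in (RSD 2)/(RSD 4) to choose the representative of $s_\alpha$ \emph{adapted to the generator being conjugated}: one writes $s_\alpha=u_1u_2u_\alpha^{-1}$ with $u_1\in E_\alpha$, $u_2\in E_{-\alpha}$ and rightmost factor exactly $u_\alpha^{-1}$, so that the conjugation telescopes,
$$s_\alpha(u_\alpha u_\beta u_\alpha^{-1})s_\alpha^{-1}=u_1u_2u_\beta u_2^{-1}u_1^{-1}=u_1u_\beta u_1^{-1},$$
which is one of the generators from part (a) (or lies in $E_\beta$ if $u_1=1$). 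No normalization statement for $E_{-\alpha}$ is ever needed. You should adopt this choice of representative rather than the generic $w_1v_0w_2$ form.
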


\begin{proof}
\begin{enumerate}
\item 
Let $X:=\langle u_\alpha u_\beta u_\alpha^{-1}: u_\alpha \in E_\alpha, u_\beta \in E_\beta, u_\alpha \neq 1 \rangle.$ Then $X \leq E_{(\alpha,\beta]}$ is clear. Conversely, since $E_\alpha$ and $E_\beta$ are normalized by $T_d$, so is $X.$ For $u_\alpha \in E_\alpha^*, u_\beta \in E_\beta$, note that $u_\alpha u_\beta u_\alpha^{-1}=[u_\alpha, u_\beta]u_\beta \in U_{(\alpha,\beta)}U_\beta.$ By (RSD 5) it follows that $u_\beta$ and $[u_\alpha,u_\beta]$ are contained in $X$, from which the claim follows.

\item 
By a), it suffices to show that $s_\alpha (u_\alpha u_\beta u_\alpha^{-1}) s_\alpha^{-1} \in E_{(\alpha,\beta]}$, where $u_\alpha \neq 1.$ Write $s_\alpha=u_1u_2 u_{\alpha}^{-1}$ for some $u_1 \in E_\alpha, u_2 \in E_{-\alpha}$ -- this is legitimate as $u_\alpha \neq 1$ and $s_\alpha$ is defined only up to elements of $T_d$. \\
Then 
$$ s_\alpha (u_\alpha u_\beta u_\alpha^{-1}) s_\alpha^{-1} = u_1 u_2 u_\beta u_2^{-1} u_1^{-1} = u_1 u_2 u_1^{-1}$$
since $u_2 \in E_{-\alpha}$ commutes with $u_\beta$ by Lemma \ref{posnegcommute}, from which the claim follows. 
\item 
It is clear that $E_\alpha$ and $E_\alpha'$ are subgroups of $V$ which generate $V$. From (i) it is immediate that $E_\alpha$ normalizes $E_\alpha'.$ Let $v \in E_\alpha \cap E_\alpha'$. Then by b)
$${}^{s_\alpha}v \in {}^{s_\alpha}E_\alpha \cap {}^{s_\alpha}E_\alpha'=E_{-\alpha} \cap E_\alpha' \leq U_- \cap U_+=1.$$
\end{enumerate}
\end{proof}

\begin{lemma} \label{StructureOfW}
\begin{enumerate} 
\item There is a canonical isomorphism $\pi: M/T_d \to W.$
\item Let $\alpha \in \Delta$ and $w \in W$ be such that $w\alpha$ is positive. Then ${}^wE_\alpha \leq V$.
\end{enumerate}
\end{lemma}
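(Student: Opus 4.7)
\textbf{Part (a).} I will construct mutually inverse homomorphisms between $M/T_d$ and $W$. On one hand, by the construction of $s_\alpha = m(v)$ with $v\in E_\alpha^* \subseteq U_\alpha^*$ and by (TRD 3) applied in the ambient group $G$, each $s_\alpha$ lies in the subgroup $N\leq G$ associated to the ambient twin root datum with $N/H\cong W$, and projects onto the reflection $s_\alpha\in W$. Since $T_d\leq H$, this restricts to a surjective homomorphism $\pi\colon M/T_d \twoheadrightarrow W$. Conversely, (RSD 1) together with (RSD 2) (which forces $\bar s_\alpha^{\,2}=1$ in $M/T_d$) implies that the generators $\bar s_{\alpha_i}=s_{\alpha_i}T_d$ satisfy the Coxeter relations of $W$, yielding a surjection $\rho\colon W\twoheadrightarrow M/T_d$ sending $s_{\alpha_i}\mapsto\bar s_{\alpha_i}$. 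As $\pi\circ\rho=\mathrm{id}_W$, both maps are isomorphisms.

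\textbf{Part (b).} I will induct on $\ell(w)$, with the trivial base case $\ell(w)=0$. For the inductive step, pick a left descent $\gamma\in\Pi$ of $w$, i.e., $\ell(s_\gamma w)=\ell(w)-1$, and write $w=s_\gamma w'$ with $w'=s_\gamma w$. Since $w^{-1}\gamma<0$ (the left-descent condition) while $w^{-1}(w\alpha)=\alpha>0$, we have $w\alpha\neq\gamma$, so $w'\alpha=s_\gamma(w\alpha)>0$ (using that $s_\gamma$ permutes $\Phi^+\setminus\{\gamma\}$). The inductive hypothesis yields $Y:={}^{w'}E_\alpha\leq V$; moreover $Y\leq U_{w'\alpha}$, and $Y$ is normalized by $T_d$ (since $w'\in M$ normalizes $T_d$ by (RSD 3) applied iteratively). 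Everything thus reduces to the following \emph{atomic claim:} for every positive root $\beta\neq\gamma$ with $s_\gamma\beta>0$, and every $T_d$-invariant subgroup $Y\leq V\cap U_\beta$, one has ${}^{s_\gamma}Y\leq V$.

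To prove the atomic claim, expand $s_\gamma=e_1\, v\, e_2$ via (RSD 4) with $v\in E_\gamma^*$ and $e_1,e_2\in E_{-\gamma}$, and decompose
\[ {}^{s_\gamma}Y = e_1\cdot v\cdot (e_2 Y e_2^{-1})\cdot v^{-1}\cdot e_1^{-1}. \]
The inner conjugation by $v$ is handled by Lemma~\ref{StructureOfV}(a): when $\beta\in\Pi$ it places $vYv^{-1}$ inside $E_{(\gamma,\beta]}\leq V$, and for non-simple $\beta$ a sub-induction on the height of $\beta$ reduces to this base situation by using (TRD 2) to chase commutators through the closed interval $[\gamma,\beta]$. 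The outer conjugations by $e_1,e_2\in E_{-\gamma}$ are controlled via (TRD 2): each commutator $[e_i,x]$ for $x\in U_\delta$ with $\delta\in[\gamma,\beta]$ lies in $U_{(-\gamma,\delta)}$, a \emph{finite} set of root groups since $\{-\gamma,\delta\}$ is prenilpotent. The resulting element ${}^{s_\gamma}Y$ is $T_d$-invariant because $s_\gamma$ normalizes $T_d$ by (RSD 3), so (RSD 5) can be applied iteratively to project it onto its root-group components in $U_{[-\gamma,\beta]}$; each such component is identified with an element of $V$ via the sub-induction, giving ${}^{s_\gamma}Y\leq V$.

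\textbf{Main obstacle.} The crux is the atomic step when $\beta$ is not a simple root: Lemma~\ref{posnegcommute} only asserts commutativity of $E_{-\gamma}$ and $E_\beta$ when both roots are simple and positive, so a direct application fails for general positive $\beta$. The remedy is a coordinated double induction, simultaneously on $\ell(w)$ and on the height of $\beta$ in the (finite) prenilpotent interval $[\gamma,\beta]$, combined with (RSD 5) to extract root-group components. The 2-spherical hypothesis enters precisely to guarantee that every such commutator interval is finite, so the projection procedure terminates after finitely many steps.
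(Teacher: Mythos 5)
Part (a) is fine and matches the paper's (very brief) argument: $T_d\trianglelefteq M$ by (RSD~3), the generators satisfy the Coxeter relations modulo $T_d$ by (RSD~1), and the composite with the canonical map $N/H\cong W$ of the ambient twin root datum shows the resulting surjection $W\twoheadrightarrow M/T_d$ is split, hence an isomorphism.

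For part (b) your inductive skeleton is exactly the paper's: write $w=s_\gamma w'$ with $\ell(w')=\ell(w)-1$, check $w'\alpha>0$ and $w'\alpha\neq\gamma$, and reduce to the ``atomic claim'' that conjugation by $s_\gamma$ sends $T_d$-invariant subgroups of $V\cap U_\beta$ (for positive $\beta\neq\gamma$) back into $V$. The gap is that you never actually prove this atomic claim. Your proposed route --- expand $s_\gamma=e_1ve_2$, chase commutators through (TRD~2), and extract root-group components with (RSD~5) via a ``sub-induction on height'' --- is asserted rather than executed, and as sketched it is circular: the components you extract lie in various $U_\delta$ with $\delta\in(-\gamma,\beta)$, and knowing that \emph{those} lie in $V$ is precisely the kind of statement you are trying to establish. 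Moreover (RSD~5) only lets you split off the $U_\beta$-factor at the end of an interval $U_{(\alpha,\beta]}$, not arbitrary components, so the ``projection procedure'' is not licensed by the axioms as stated. The fix is short and is what the paper does: by Lemma~\ref{StructureOfV}(c), $V=E_\gamma\ltimes E_\gamma'$ with $E_\gamma\leq U_\gamma$ and $E_\gamma'\leq\langle U_\delta:\delta>0,\ \delta\neq\gamma\rangle$; since $Y\leq V\cap U_\beta$ with $\beta>0$, $\beta\neq\gamma$, uniqueness of the semidirect decomposition forces $Y\leq E_\gamma'$, and Lemma~\ref{StructureOfV}(b) gives ${}^{s_\gamma}E_\gamma'=E_\gamma'\leq V$. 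No new commutator computation, no double induction, and no direct appeal to Lemma~\ref{posnegcommute} beyond its use inside the already-proved Lemma~\ref{StructureOfV} is needed; the ``main obstacle'' you identify has already been absorbed there.
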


\begin{proof}
\begin{enumerate}
\item Note that $T_d$ is a normal subgroup of $M$ by (RSD 3); by (RSD 1) it follows that $M/T_d \cong W.$ 
\item Since $T_d$ normalizes $E_\alpha$, ${}^wE_\alpha$ is well-defined. If $l(w)=0$, there is nothing to prove, so suppose $l(w)\geq 1.$ Since $w\alpha>0$, we can write $w=s_\beta w'$, where $\beta$ is a simple root distinct from $\alpha$ and $w'$ is such that $w'\alpha>0.$ By induction, ${}^{w'} E_\alpha \leq V=E_\beta \ltimes E_\beta'.$ Since ${}^{w'} E_\alpha \leq U_{w' \alpha}$ and $w' \alpha \neq \beta$, it follows that ${}^{w'} E_\alpha \leq E_\beta'.$ 
Then ${}^w E_\alpha\leq {}^{s_\beta}E_\beta'=E_\beta'$.
\end{enumerate}
\end{proof}

%
%

%
%
%
\index{Bruhat decomposition}
The next step consists of exhibiting a Bruhat decomposition for $F$.
\begin{lemma}\label{BruhatForF}
The group $F$ can be written as $F=VMV=\cup_{w \in W} VwV.$ 
\end{lemma}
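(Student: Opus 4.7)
The goal is to show that $F = VMV = \bigcup_{w \in W} V\dot w V$, where $\dot w$ denotes any lift of $w \in W$ to $M$.

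The strategy is to verify that $(F,B,M,S)$ is a BN--pair, where $B := T_d V$ and $S = \{s_\alpha : \alpha \in \Pi\}$. Since $T_d \subseteq M$ and $T_d$ normalizes $V$ by (RSD 3), the Bruhat decomposition $F = BMB$ that comes from the BN--pair axioms immediately simplifies to $F = VMV = \bigcup_{w \in W} V\dot w V$. The axioms (BN1) $F = \langle B, M \rangle$ is tautological; (BN3) is exactly Lemma \ref{StructureOfW}(i); (BN2) $B \cap M = T_d \triangleleft M$ reduces to checking $V \cap M = \{1\}$, which follows from the ambient Bruhat decomposition in $G$ (because $V \subseteq U_+$ while any nontrivial element of $M\setminus T_d$ projects to a nontrivial Weyl element); and (BN4) $s_\alpha B s_\alpha^{-1} \not\subseteq B$ holds since $s_\alpha E_\alpha s_\alpha^{-1} = E_{-\alpha} \subseteq U_-$ is disjoint from $T_d V \subseteq T_d U_+$ in $G$.

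The heart of the proof is the exchange condition (BN5): $s_\alpha B w \subseteq B s_\alpha w B \cup B w B$. Given $v \in V$, apply the semidirect decomposition $V = E_\alpha \ltimes E_\alpha'$ from Lemma \ref{StructureOfV}(iii) to write $v = u u'$ with $u \in E_\alpha$ and $u' \in E_\alpha'$. If $u = 1$, then because $s_\alpha$ normalizes $E_\alpha'$ by Lemma \ref{StructureOfV}(ii), one has $s_\alpha u' = \tilde u' s_\alpha$ with $\tilde u' \in E_\alpha' \subseteq V$, and $s_\alpha v w = \tilde u' s_\alpha w \in B s_\alpha w B$. If $u \neq 1$, I invoke (RSD 4) to obtain $u_1, u_2 \in E_{-\alpha}$ with $m(u) = u_1 u u_2$; combined with (RSD 2), this yields $s_\alpha = u_1 u u_2 t^{-1}$ for some $t \in T_d$. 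Solving for $s_\alpha u$ and using $s_\alpha^2 \in T_d$ (the $i=j$ case of (RSD 1)), one obtains
\[
s_\alpha u \;=\; e_1 \cdot t' \cdot u_2^{-1}, \qquad e_1 \in E_\alpha,\ t' \in T_d,\ u_2^{-1} \in E_{-\alpha}^*.
\]
Hence $s_\alpha v w = e_1 t' u_2^{-1} u' w$, and it remains to show $u_2^{-1} u' w \in B w' B$ for some $w' \in \{w, s_\alpha w\}$.

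The main obstacle is exactly this last commutation, since the subgroup $E_\alpha'$ contains commutator pieces $[E_\alpha, E_\beta] \subseteq U_{(\alpha,\beta)}$ that do \emph{not} commute with $E_{-\alpha}$ a priori. To handle it, I plan to proceed by induction on $l(w)$. The base case $l(w) = 0$ reduces to proving $u_2^{-1} u' \in VMV$: inserting $1 = s_\alpha s_\alpha^{-1}$, absorbing $s_\alpha u_2^{-1} s_\alpha^{-1} \in E_\alpha \subseteq V$, and once more applying (RSD 4) to rewrite the resulting $s_\alpha^{-1}$-conjugate as an element of $E_{-\alpha} M E_{-\alpha} \subseteq V M V$ does the job. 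For the inductive step, write $w = s_\beta w_1$ with $l(w_1) < l(w)$. If $\beta \neq \alpha$, Lemma \ref{posnegcommute} allows $u_2^{-1}$ to commute past $E_\beta$--factors; the $U_{(\alpha,\beta)}$--factors inside $u'$ are absorbed using (RSD 5), which is precisely the axiom forcing the subgroup generated by the relevant products to decompose compatibly with the root grading in the rank--two Levi $L^{\{\alpha,\beta\}}$ (finite--dimensional by 2--sphericity, so the commutators $[U_{-\alpha}, U_{(\alpha,\beta)}]$ lie in a controlled, prenilpotent range of roots). If $\beta = \alpha$, the rank--one identity in $F_\alpha := \langle T_d, E_\alpha, E_{-\alpha}\rangle$ reduces everything to a length--shorter situation and the inductive hypothesis applies.

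Assembling these pieces establishes (BN5). The BN--pair axioms then yield $F = BMB = VMV$, and the disjoint union refinement $F = \bigsqcup_{w \in W} V \dot w V$ follows from the standard BN--pair Bruhat decomposition. The main difficulty, as indicated, is the controlled commutation between $E_{-\alpha}$ and $E_\alpha'$; 2--sphericity and (RSD 5) are precisely what make this tractable.
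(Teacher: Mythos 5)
Your overall framing (verify BN-pair--type axioms for $B=T_dV$, $M$, and deduce $F=VMV$) is close in spirit to the paper, which likewise reduces everything to the exchange-type inclusion: the paper shows directly that $VMV$ is closed under right multiplication by each $s_\alpha$, proving $VwVs_\alpha\subseteq Vws_\alpha V\cup VwV$ by induction on $l(w)$, after first establishing $E_{-\alpha}\subseteq T_dE_\alpha\cup T_dE_\alpha s_\alpha E_\alpha$. So the difference is not the strategy but the execution of the key inclusion, and there your argument has a genuine gap.

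The gap is exactly the step you flag as ``the main obstacle'' and then do not actually resolve: after writing $v=uu'$ with $u\in E_\alpha$, $u'\in E_\alpha'$ and converting $s_\alpha u$ into $e_1t'u_2^{-1}$ with $u_2^{-1}\in E_{-\alpha}$, you must move $u_2^{-1}$ past $u'\in E_\alpha'$. Lemma \ref{posnegcommute} only handles the $E_\beta$-factors of $u'$; the commutator factors $[E_\alpha,E_\beta]\subseteq U_{(\alpha,\beta)}$ genuinely do not commute with $E_{-\alpha}$, and the appeal to (RSD~5) does not absorb them. For instance in type $A_2$ one has $(\alpha,\beta)=\{\alpha+\beta\}$ and $[U_{-\alpha},U_{\alpha+\beta}]\subseteq U_\beta$, so the error terms land in ambient root groups $U_\gamma$; to put them back into $V$ you would need to know $F\cap U_\gamma=E_\gamma$, but that identification is only proved (in the proof of Theorem \ref{subbuilding}) \emph{using} the Bruhat decomposition you are trying to establish, so this route is circular. (RSD~5) is a splitting statement for $T_d$-invariant subgroups of $U_{(\alpha,\beta]}$ and says nothing about commutators with $U_{-\alpha}$. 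The fix is to never let the $E_{-\alpha}$-factor meet $E_\alpha'$: since $E_\alpha'$ is normal in $V$ and is normalized by $s_\alpha$ (Lemma \ref{StructureOfV}), decompose $v$ with the $E_\alpha'$-part adjacent to $s_\alpha$ and push it through, so that only $s_\alpha u$ with $u\in E_\alpha$ remains to be rewritten, and then immediately replace the resulting $E_{-\alpha}$ via $E_{-\alpha}\subseteq T_dE_\alpha\cup T_dE_\alpha s_\alpha E_\alpha$, together with Lemma \ref{StructureOfW}(ii) (${}^wE_\alpha\leq V$ when $w\alpha>0$) to absorb conjugated $E_\alpha$-factors into $V$. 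This is precisely the paper's computation (carried out for $VwVs_\alpha$; your left-sided version is equivalent after inverting), and with that rearrangement your induction on $l(w)$ goes through; as written, however, the inductive step does not close.
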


\begin{proof}
The set $V\cdot M\cdot V$ contains $V$ and $M$, is stable under inversion and closed under multiplication by elements in $V$ or $T_d$ from the right or left. To show that it coincides with $F$, it thus suffices to check that it is closed under multiplication from the right by $s_\alpha, \alpha \in \Delta.$\\

{\it First step.} For $\alpha \in \Delta$, $E_{-\alpha} \subseteq T_dE_\alpha \cup T_dE_\alpha s_\alpha E_\alpha.$ \\
Indeed, $1 \in T_dE_\alpha$ while by definition each $v \in E_{-\alpha}^*$ has the form $v={}^{s_\alpha} v_0$ for some $v_0 \in E_\alpha$. By (RSD 4), there are $v_1, v_2 \in E_\alpha$ such that $m(v_0)={}^{s_{\alpha}}v_1 v_0 {}^{s_\alpha}v_2.$ Then ${}^{s_\alpha^{-1}}s_\alpha=v_1vv_2,$ i.e.\ $v \in E_{\alpha}s_\alpha E_{\alpha}$, from which the claim follows.\\

{\it Second step.}
Since $T_d$ normalizes $V$, we can write $VMV=\cup _{w \in W} VwV$ unambiguously. We will show that $VwV s_\alpha \subseteq Vws_\alpha V \cup VwV$, from which the claim will follow.\\

If $l(w)=0$, i.e.\ $w=1$, then by the first step and Lemma \ref{StructureOfV},
$$Vs_\alpha=E_\alpha s_\alpha {}^{s_\alpha}E_\alpha'\subseteq E_\alpha s_\alpha V.$$
Suppose $l(w)\geq 1$ and the claim is proven for all $w'$ with $l(w')<l(w).$
Two cases can occur:\\
(1) $l(ws_\alpha)>l(w).$ This is the case if and only if $w\alpha>0$. Then $wE_\alpha w^{-1} \leq V$ by Lemma \ref{StructureOfW} and we calculate
$$VwVs_\alpha=VwE_\alpha s_\alpha E_\alpha'=VE_\alpha w s_\alpha E_\alpha' \subseteq Vws_\alpha V.$$
(2) $l(ws_\alpha)<l(w)$, i.e.\ $w\alpha<0.$ Then we can write $w=w's_{\alpha}$ with $l(w')=l(w)-1 \geq 0.$ We calculate
$$VwVs_\alpha=Vw's_\alpha E_\alpha s_\alpha E_\alpha'=Vw'E_{-\alpha}E_\alpha'\subseteq Vw'V \cup Vw'Vs_\alpha V=Vw'V \cup Vw's_\alpha V.$$
Here the last equality follows because $w'\alpha>0$, which allows us to apply the first case.
\end{proof}

%

We can turn to the proof of Theorem 4.5.

\begin{proof} 
For $\gamma \in \Phi \backslash \Pi$ and $w \in W, \alpha \in \Pi$ such that $w\alpha=\gamma$ choose some lift $\tilde{w} \in M$ of $w$ and set $E_\gamma:=\tilde{w}E_\alpha\tilde{w}^{-1}.$ Then for each $\gamma\in \Phi$, $E_\gamma \subseteq F_\gamma.$ Assume for the moment that equality holds (in particular, $E_\gamma$ will then not depend on the choice of $\alpha$ and $\tilde{w}$). \\
Then clearly for each $\gamma \in \Phi$, $F_\gamma$ is nontrivial and normalized by $T_d$ by (RSD 3). By (RSD 4), $s_\alpha \in \langle E_{-\alpha},E_\alpha\rangle$, from which it follows that $F$ is generated by $T_d$ and $\langle E_\alpha, E_{-\alpha}: \alpha \in \Delta \rangle$, i.e.\ (TRD 1) holds.
Set $V_-:=\langle F_\gamma: \gamma<0\rangle.$ Then $V_-\cap V \leq U_-\cap U_+=1$ and therefore (TRD 4) is satisfied. Similarly, (TRD 2) holds by the definition of $F_\gamma$ and the corresponding property for $G$. \\
Axiom (TRD 3) holds for $F_\gamma$, $\gamma \in \Delta$ by (RSD 2) and (RSD 4). 


It remains to prove that $F_\gamma=E_\gamma$ for $\gamma \in \Phi$, in particular $F_\alpha=E_\alpha$ for $\alpha \in \Pi$ which is not clear {\it a priori}. \\


%

{\it First step. If $\gamma \in \Pi,$ then $F \cap U_\gamma=E_\gamma$}.\\
By the Bruhat decomposition $F=VMV$ it follows that 
$F \cap U_\gamma=V \cap U_\gamma$. Since $V=E_\gamma \ltimes E_\gamma'$ it follows that
$$ s_\gamma (F \cap U_\gamma) s_\gamma^{-1}=s_\gamma V s_\gamma^{-1} \cap U_{-\gamma} \\
 = E_{-\gamma} E_\gamma' \cap U_{-\gamma}=E_{-\gamma}\cdot (E_\gamma' \cap U_{-\gamma})=E_{-\gamma},$$
 from which it follows that $F \cap U_\gamma=E_\gamma.$\\
 
{\it Second step. If $\delta \in \Phi\backslash \Pi$ is arbitrary, then $F \cap U_\delta=E_\delta$.}\\
Suppose first that $\delta \in \Phi^+$.
Let $w=\tilde{w} \in M,\alpha \in \Pi$ as in the definition of $E_\delta.$ Then
$$w(F\cap U_\delta)w^{-1}=w(V \cap U_\delta)w^{-1}=wVw^{-1} \cap (U_+ \cap U_\alpha) \subseteq V \cap U_\alpha=E_\alpha.$$
By definition, $\tilde{w}E_\alpha\tilde{w}^{-1} \subseteq F_\delta$, and we have just shown the reverse inclusion, i.e.\ $F_\delta=E_\delta$.\\
Clearly the same reasoning works when $\delta \in \Phi^-$, which finishes the proof of the theorem.
\end{proof}

\begin{remark} The statement of Lemma \ref{BruhatForF} that $F=\cup_{w\in W} V w V$ can be thought of as the fact that $F$ is a graded subgroup of $G$.
This means that whenever $f=b_1wb_2$ with $b_1, b_2 \in B$ and $w \in W$ is the Bruhat decomposition of an element $f \in F$, then $b_1,b_2$ and $w$ can actually be chosen to be elements of $F$.
\end{remark}

\begin{remark}
Let $G$ be a group endowed with a 2-spherical twin root datum \\
$(H,(U_\alpha)_{\alpha \in \Phi(W,S)}).$
Then $(H,(U_\alpha)_{\alpha \in \Delta})$ meets conditions (RSD 1)-(RSD 4), but not necessarily (RSD 5). Indeed, if (RSD 5) is met, it follows from the proof of the preceding theorem that $U_+=\langle U_\alpha: \alpha \in \Delta\rangle.$ This is satisfied for isotropic reductive $k$-groups with $|k|\geq 4$, but fails e.g. for $G_2(\Fq_2).$ 
\end{remark}

\begin{remark} A geometric interpretation of the theorem is as follows: 
Let $\Delta$ be the twin building associated to $G$, $\mA$ the twin apartment determined by $H$ and $C_+,C_-$ the two opposite chambers corresponding to $B_+,B_-.$ On each panel $F_\alpha$ of $C_+$, fix chambers according to the action of $E_\alpha$ on $F_\alpha.$ Condition (RSD 4) ensures that these form a sub-Moufang set. The remaining conditions  are the necessary compatibility conditions which ensure that these chambers give rise to a sub twin building with $\mA$ as a twin apartment. \\
In particular, the twin building $\Delta(F)$ associated to $F$ embeds in $\Delta(G)$ as a closed convex subcomplex. 
\end{remark}

\subsection{The case of almost split Kac--Moody groups}
We apply Theorem \ref{subbuilding} to almost split Kac--Moody groups.

Let $G$ be a group endowed with a twin root datum $(H,(U_\alpha)_{\alpha \in \Phi(W,S)}).$
\index{twin! root datum!locally split}
Then the twin root datum is said to be {\bf locally split} (over a family of fields $(k_\alpha)_{\alpha \in \Phi})$ if $H$ is abelian and for each $\alpha \in \Phi$, 
$\langle U_\alpha, U_{-\alpha} \rangle$ is isomorphic to either $\SL_2(k_\alpha)$ or
$\PSL_2(k_\alpha).$

\begin{theorem}\label{maximalsplitsubgroup}
Let $k$ be an infinite field and let $G(k)$ be a 2-spherical almost split Kac--Moody group obtained by Galois descent. Let $(Z(k),(V_\alpha)_{\alpha \in \Phi(W,S)})$ denote its canonical twin root datum and let $T_d(k)\leq Z(k)$ be a maximal $k$-split torus. \\
For each simple root 
$\alpha \in \Pi$ let $E_\alpha \leq V_\alpha$ be a subgroup isomorphic to $(k,+)$ which is normalized by $T_d(k)$. Then there is a subgroup
$F(k)\leq G(k)$ which contains $T_d(k) \langle E_\alpha: \alpha \in \Pi \rangle$ and which is endowed with a locally split twin root datum. 
\end{theorem}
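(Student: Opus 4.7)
The plan is to apply Theorem \ref{subbuilding} to the data $(T_d(k),(E_\alpha)_{\alpha\in\Pi})$, which requires verifying the five axioms (RSD 1)--(RSD 5). Once this is done, Theorem \ref{subbuilding} produces the group $F(k):=\langle T_d(k), E_\alpha, s_\alpha : \alpha\in\Pi\rangle$ together with a twin root datum of type $(W,S)$. Local splitness over a family of fields $(k_\alpha)$ all equal to $k$ will then follow by inspecting the rank one subgroups. The unifying principle is that for each pair of simple roots $\alpha,\beta\in\Pi$, 2-sphericality makes $J:=\{\alpha,\beta\}$ spherical, so by Proposition \ref{TorusActsOnRootGroups}(b) the Levi factor $L^J=Z\langle V_\gamma:\gamma\in\Phi(W_J)\rangle$ is a connected reductive $k$-group containing $T_d$ as a maximal $k$-split torus; everything is checked inside finite-dimensional reductive subgroups of $G(k)$, reducing the problem to the classical Borel--Tits theorem \ref{BorelTitsSplit}.

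The axioms (RSD 1), (RSD 2), (RSD 4) reduce to applying Theorem \ref{BorelTitsSplit} inside $L^J$. The $T_d$-stable subgroups $E_\alpha\cong(k,+)$ satisfy the weight-space condition (being normalized by $T_d$ and lying in non-multipliable-root layers thanks to Proposition \ref{TorusActsOnRootGroups}(c)), producing a unique connected split reductive $k$-subgroup $F^J\leq L^J$ containing $T_d\cdot\langle E_\alpha,E_\beta\rangle$. Inside $F^J$ the braid relation $(s_\alpha s_\beta)^{m_{\alpha\beta}}\in T_d$ is standard for split Chevalley groups, giving (RSD 1). Specialising to $J=\{\alpha\}$, the group $F^{\{\alpha\}}$ is $\SL_2(k)$ or $\PSL_2(k)$, in which the familiar formulas $m(r)m(t)^{-1}=(rt^{-1})\cdot h_\alpha\in T_d$ and $m(v)={}^{s_\alpha}v_1\cdot v\cdot{}^{s_\alpha}v_2$ yield (RSD 2) and (RSD 4). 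Axiom (RSD 3) is immediate: $T_d$ normalises $E_\alpha$ by hypothesis, and $s_\alpha$ normalises $T_d$ by Proposition \ref{TorusActsOnRootGroups}(d).

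The main obstacle is (RSD 5). Here again one works in $L^J$, $J=\{\alpha,\beta\}$. The unipotent group $U_{(\alpha,\beta]}$ decomposes as a $T_d$-variety into root groups $V_\gamma$ ($\gamma\in(\alpha,\beta]$), on each of which $T_d$ acts via the character $\gamma$ (or $\gamma$ and $2\gamma$ in the metabelian case of Proposition \ref{TorusActsOnRootGroups}(c)(ii)). Crucially, by Proposition \ref{TorusActsOnRootGroups}(f) the characters separating $V_\beta$ from $U_{(\alpha,\beta)}$ are non-proportional. Given $T_d$-stable $X\leq U_{(\alpha,\beta]}$ and $x=u_1u_2\in X$ with $u_1\in U_{(\alpha,\beta)}$, $u_2\in V_\beta$, I would pick $t\in T_d(k)$ that centralises $V_\beta$ but acts with pairwise distinct non-trivial eigenvalues on the root groups making up $U_{(\alpha,\beta)}$; such $t$ exists because $k$ is infinite and $T_d(k)$ is Zariski dense in $T_d$. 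The finitely many conjugates $t^nxt^{-n}\in X$ then form a Vandermonde-type system from which one extracts each weight component of $u_1$ in $X$, hence $u_1\in X$ and $u_2=u_1^{-1}x\in X$. With all five axioms in hand, Theorem \ref{subbuilding} delivers the twin root datum on $F(k)$; for each simple $\alpha$ the rank one subgroup $\langle F_\alpha,F_{-\alpha}\rangle=\langle E_\alpha,E_{-\alpha}\rangle$ coincides with $F^{\{\alpha\}}$ by comparing Bruhat decompositions and is therefore $\SL_2(k)$ or $\PSL_2(k)$; for non-simple $\gamma$, $\langle F_\gamma,F_{-\gamma}\rangle$ is $F(k)$-conjugate to the simple case, so the datum is locally split over $k$.
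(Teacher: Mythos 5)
Your overall architecture---reduce to the rank $2$ Levi factors via $2$-sphericality, apply Theorem \ref{BorelTitsSplit} there to obtain split reductive subgroups containing $T_d$ and the $E_\alpha$, read off (RSD 1)--(RSD 4) from the resulting spherical twin root data (with (RSD 3) from the hypothesis and the rank $1$ case), and then feed everything into Theorem \ref{subbuilding}---is exactly the paper's proof, and that part is fine. The gap is in your verification of (RSD 5). First, the torus element you require need not exist: you ask for $t \in T_d(k)$ with $\beta(t)=1$ and \emph{pairwise distinct} values $\gamma(t)$ on the root groups for $\gamma \in (\alpha,\beta)$, but distinct roots in the open interval can be congruent modulo $\beta$. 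Already for a relative root system of type $B_2$ one has $(\alpha,\beta)=\{\alpha+\beta,\ \alpha+2\beta\}$, and every $t \in \ker\beta$ satisfies $(\alpha+\beta)(t)=(\alpha+2\beta)(t)$; the same phenomenon occurs for $G_2$ and $BC_2$ configurations, which do arise as relative rank $2$ systems of almost split groups.

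Second, and more fundamentally, the Vandermonde inversion is not legitimate inside $X$: the inverse matrix has entries in $k$, whereas $X$ is only an abstract subgroup normalized by $T_d(k)$, closed under the group law and under conjugation but not under multiplication by arbitrary scalars of $k$. From the conjugates $t^n x t^{-n} \in X$ you can therefore only conclude that the weight components of $u_1$ lie in the $k$-span (equivalently, in the Zariski closure) of $X$, not in $X$ itself---and membership in $X$ is precisely what (RSD 5) asserts. (There is also the side issue that $U_{(\alpha,\beta)}$ is in general non-abelian, so ``weight components'' and the linear system must in any case be organized along a nibbling sequence or a central series, not read off termwise.) The paper treats this step differently: since $k$ is infinite, $T_d(k)$ is Zariski dense in $T_d$, hence the closure $\overline{X}$ is normalized by the whole torus, and one then invokes \cite[Proposition 3.11]{BorelTitsRG} on closed $T_d$-stable subgroups of unipotent groups. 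Your elementary argument, as written, does not substitute for that input; to repair your proof you should either pass to Zariski closures and quote that result, or give a genuine induction along a nibbling sequence together with an argument producing the components by integral (not $k$-linear) combinations of $T_d(k)$-conjugates.
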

\begin{proof}
Since $G$ is assumed to be 2-spherical, for each pair of simple roots $\{\alpha,\beta\} \subseteq \Delta$ the group $X_{\alpha\beta}:=Z(k) \langle V_{\pm \alpha}, V_{\pm \beta} \rangle$ can be identified with the $k$-points of a reductive algebraic $k$-group of relative rank 2. By Theorem \ref{BorelTitsSplit}, there is a split subgroup $Y_{\alpha\beta}\leq X_{\alpha\beta}$ which contains $T_d(k)$ and $E_\alpha, E_\beta$. Now $Y_{\alpha\beta}$ is endowed with a spherical twin root datum, the properties of which imply that the axioms (RSD 1) - (RSD 4) of a root subdatum are satisfied, since these need to be checked only for rank 2 subgroups.\\
Since $k$ is infinite, $T_d(k)$ is Zariski dense in $T_d$. For a subgroup $X \leq V_{(\alpha,\beta]}$ normalized by $T_d(k)$ it follows that $\overline{X}$ is normalized by $T_d$. By \cite[Proposition 3.11]{BorelTitsRG} it follows that (RSD 5) is satisfied as well. \\

Theorem \ref{maximalsplitsubgroup} gives the existence of $F$, and from the fact that the group $Y_{\alpha\beta}$ is a split reductive group it is immediate that the twin root datum for $F$ is locally split.
\end{proof}

\begin{definition} Let $k$ be an infinite field and let $G(k)$ be a 2-spherical almost split Kac--Moody group obtained by Galois descent. 
Any group $F$ obtained from $G(k)$ in this way is called a {\bf maximal split subgroup} of $G$.
\end{definition}

\begin{remark}
It is always poosible to find subgroups $E_\alpha$ as required in Theorem 4.13: just let $E_\alpha$ be a one-dimensional $k$-subspace of $\mathscr Z(V_\alpha)$. Then Proposition \ref{TorusActsOnRootGroups}(iii) b) shows that $E_\alpha$ is normalized by $T_d(k).$ 
This proves Theorem 1.2.
\end{remark}

In particular, any almost split 2-spherical Kac--Moody group is ''sandwiched'' between two split Kac--Moody groups: For a splitting field $E$ of $G$, one has
$$ F(k) \leq G(k) \leq G(E).$$

Here the Coxeter type of $F(k)$ is the same as the Coxeter type of $G(k)$, while the type of $G(k)$ equals the type of $G(E)$ if and only if $G$ is already split split over $k$.  
%


\begin{remark} We used Theorem \ref{subbuilding} to produce a locally split subgroup. The theorem is more general, though, 
as arbitrary sub-Moufang sets are allowed. In particular, we recover Example \ref{chainorthogonalgroups}. 
\end{remark}
\begin{remark}
Another example of a basis for a root subdatum $(T_d,(E_\alpha)_{\alpha \in \Delta})$ as required in Theorem \ref{subbuilding} comes from subfields: If $k \subseteq K$ and $\GG_\DD$ is a constructive Tits functor, then take $T_d:=T(k)$ and $E_\alpha:=U_\alpha(k)$ inside $\GG_\DD(K)$. 
Of course, the theorem can be applied more than once, i.e.\ pass first to a locally split subgroup and then to $k$-rational points.\\
\end{remark}

%

Finally, just as Example \ref{chainorthogonalgroups} suggests, there is a chain condition on groups containing a maximal split subgroup.
\begin{proposition}
Let $k$ be an infinite field, char $k \neq 2$ and let $G(k)$ be an almost split Kac--Moody group over $k$ obtained by Galois descent. Let $F \leq G$ be a maximal split subgroup. Let 
$$F=H_1 \leq H_2 \leq \ldots$$
be a chain of subgroups which are obtained by integrating a root subdatum and such that $H_i=H_i^\dagger$, i.e.\ $H_i$ is generated by its root groups. Then the chain eventually becomes stationary. \\
More precisely, the length of a strictly increasing chain is bounded by ${\displaystyle \sum_{\Pi \in \Delta} \dim_k V_\alpha}$.
\end{proposition}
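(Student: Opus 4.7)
The plan is to exploit the fact that each $H_i$ is essentially determined, relative to the fixed split torus $T_d$, by its simple root groups $E_\alpha^{(i)} := H_i \cap V_\alpha$ ($\alpha \in \Pi$); these form an ascending chain of $k$-subspaces inside the finite-dimensional vector group $V_\alpha$, and the total length of the chain $(H_i)$ is bounded by the sum of their $k$-dimensions.

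First I would apply Theorem \ref{subbuilding} to each $H_i$: since $H_i = H_i^\dagger$ is generated by its own root groups, and by that theorem the non-simple root groups $E_\gamma^{(i)}$ arise as $W$-conjugates of the simple ones, the group $H_i$ is completely determined by $T_d$ together with the family $(E_\alpha^{(i)})_{\alpha \in \Pi}$. Consequently a strict inclusion $H_i \subsetneq H_{i+1}$ forces $E_\alpha^{(i)} \subsetneq E_\alpha^{(i+1)}$ for at least one simple root $\alpha$.

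The second and more delicate step is to show that each $E_\alpha^{(i)}$ is in fact a $k$-subspace of $V_\alpha$ (respectively of $\mathscr Z(V_\alpha)$ in the metabelian case), not merely an additive subgroup. By (RSD 3) the subgroup $E_\alpha^{(i)}$ is normalized by $T_d(k)$, and by Proposition \ref{TorusActsOnRootGroups}(iii) the torus $T_d$ acts on the abelian layers of $V_\alpha$ via a nontrivial character, i.e.\ by scalar multiplication through $\alpha(t)\in k^\times$. Since $k$ is infinite, $T_d(k)$ is Zariski dense in $T_d$, so the Zariski closure $\overline{E_\alpha^{(i)}}$ is a $T_d$-invariant closed subgroup of the vector group $V_\alpha$, and any such subgroup is automatically a $k$-subspace. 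Combining this with the Moufang-set structure of the rank-$1$ subgroup $\langle E_\alpha^{(i)}, E_{-\alpha}^{(i)}\rangle$ coming from (RSD 4), and using $\chark k\neq 2$, I would deduce that $E_\alpha^{(i)}$ already equals the $k$-points of its Zariski closure, and so is itself a $k$-subspace.

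Granted these two steps, for each $\alpha\in\Pi$ the chain $E_\alpha^{(1)}\subseteq E_\alpha^{(2)}\subseteq \cdots$ is an ascending sequence of $k$-subspaces of $V_\alpha$, so has at most $\dim_k V_\alpha$ strict inclusions. Since each strict step $H_i\subsetneq H_{i+1}$ contributes at least one strict step in some $E_\alpha$-chain, the total length of the $H_i$-chain is bounded by $\sum_{\alpha\in\Pi}\dim_k V_\alpha$, as claimed. The real obstacle is the promotion of $T_d(k)$-stability of $E_\alpha^{(i)}$, which a priori only gives closure under the possibly proper subgroup $\alpha(T_d(k))\subseteq k^\times$, to full $k$-linearity; this is where the hypotheses that $k$ be infinite (for density of $T_d(k)$ in $T_d$) and that $\chark k\neq 2$ (to pin down the rank-$1$ Moufang structure) both enter the argument.
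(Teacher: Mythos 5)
Your overall route is the paper's route: intersect each $H_i$ with the simple root groups, use the two-layer structure of $V_\alpha$ (the layers $\mathscr Z(V_\alpha)$ and $V_\alpha/\mathscr Z(V_\alpha)$ on which $T_d$ acts by a character), argue that the resulting subgroups are $k$-subspaces, and then count dimensions; the reduction of a strict jump $H_i\subsetneq H_{i+1}$ to a strict jump in some $H_i\cap V_\alpha$ via ``$H_i$ is generated by its root groups'' is exactly how the paper concludes as well.

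The genuine gap is in your second step, the promotion to $k$-linearity, which you yourself identify as the crux. Passing to the Zariski closure proves nothing about $E_\alpha^{(i)}$ itself: the closure of a $T_d(k)$-invariant additive subgroup is indeed a $T_d$-invariant closed subgroup of a vector group, but the assertion that $E_\alpha^{(i)}$ ``equals the $k$-points of its Zariski closure'' is unjustified and is false for general additive subgroups (a proper intermediate subfield times a basis vector is Zariski dense in the line it spans); nothing in the quoted Moufang-set structure of $\langle E_\alpha^{(i)},E_{-\alpha}^{(i)}\rangle$ closes this hole, and this is also not where $\chark k\neq 2$ is needed. The correct mechanism, which the paper invokes tersely when it says that $H_{i,\alpha}\cap\mathscr Z(V_\alpha)$ and the image of $H_{i,\alpha}$ in $V_\alpha/\mathscr Z(V_\alpha)$ are $k$-subspaces because they are $T_d(k)$-invariant, is purely elementary: by Proposition \ref{TorusActsOnRootGroups} the torus acts on each layer by the scalar $\alpha(t)$ (resp.\ $2\alpha(t)$), and evaluating the character on the image of the corresponding coroot shows that the set of scalars so obtained contains all squares of $k^\times$; since $\chark k\neq 2$, every $\lambda\in k$ is a difference of two squares, $\lambda=\bigl(\tfrac{\lambda+1}{2}\bigr)^2-\bigl(\tfrac{\lambda-1}{2}\bigr)^2$, so an additive subgroup stable under multiplication by these scalars is already a $k$-subspace. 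With that substitution your dimension count goes through and agrees with the paper; as written, however, the pivotal linearity claim is not established.
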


\begin{proof}
For each simple root $\alpha \in \Pi$ with corresponding root group $V_\alpha(k) \leq G(k)$  let $H_{i,\alpha}:=H_i \cap V_\alpha.$ 
Then $H_{i,\alpha} \cap \mathscr Z(V_\alpha)$ is a $k$-sub-vectorspace since it is invariant
under $T_d(k)$, similarly for $(H_{i,\alpha}\cdot \mathscr Z(V_\alpha))/\mathscr Z(V_\alpha).$ \\
Recall from Proposition \ref{TorusActsOnRootGroups} that $V_\alpha(k)$ is an extension of two finite-dimensional $k$-vector spaces. 
This implies that $(H_{i,\alpha})$ eventually becomes stationary. Since the $H_i$ are supposed to be generated by their root groups, the first claim follows.\\
Since in a strictly increasing chain of subgroups, in each step there is some $\alpha \in \Pi$ such that $H_{i,\alpha}$ is strictly contained in $H_{i+1,\alpha}$, the second claim follows.
\end{proof}
%
%
%
%
%
%
\section{The isomorphism problem}
In this chapter we prove that every abstract isomorphism of two 2-spherical almost split Kac--Moody groups over fields of characteristic 0 is standard in the sense that it induces an isomorphism of the associated canonical twin root data. 
\subsection{Preparatory lemmas}
\begin{lemma} \label{FGSubgroupDense} Let $k$ be an infinite field and let $T$ be a $k$-split torus.
Let $S\leq T(k)$ be such that $T(k)/S$ is finitely generated. Then $S$ is Zariski dense in $T$.
\end{lemma}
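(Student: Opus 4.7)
The plan is to argue by contradiction: assume $S$ is not Zariski dense and derive that $k^\times$ must be finitely generated, contradicting the classical fact that the multiplicative group of an infinite field is never finitely generated. First I would look at the Zariski closure $\overline{S}$, which is a closed subgroup of $T$ (closures of subgroups are subgroups by continuity of multiplication and inversion) and is proper by assumption. Since $T$ is a $k$-split torus and $\overline{S}$ is reduced (being the closure of a set of classical points), the identity component $\overline{S}^0$ is a proper subtorus and the component group $\overline{S}/\overline{S}^0$ is finite. Picking any non-trivial $k$-character of $T/\overline{S}^0$ and raising it to a power divisible by $|\overline{S}/\overline{S}^0|$ then produces a non-trivial $\chi\in X^*(T)$ with $\chi(\overline{S})=\{1\}$; non-triviality survives since $X^*(T)\cong\ZZ^n$ is torsion-free.

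Next I would identify the image $\chi(T(k))\subseteq k^\times$. Writing $\chi=d\cdot\chi'$ with $\chi'$ primitive (i.e.\ indivisible) in $X^*(T)$ and $d\ge 1$, the character $\chi'$ is part of a $\ZZ$-basis of $X^*(T)$, which corresponds to a $k$-isomorphism $T\cong{\bf G}_m\times{\bf G}_m^{n-1}$ under which $\chi'$ is the projection onto the first factor. Hence $\chi'(T(k))=k^\times$ and therefore $\chi(T(k))=(k^\times)^d$. Since $\chi$ kills $S$, it factors through $T(k)/S$, so $(k^\times)^d$ inherits finite generation from the hypothesis that $T(k)/S$ is finitely generated.

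Finally, the short exact sequence
\[ 1 \longrightarrow \mu_d(k) \longrightarrow k^\times \xrightarrow{x\mapsto x^d} (k^\times)^d \longrightarrow 1 \]
has finite kernel $\mu_d(k)$, so $k^\times$ itself must be finitely generated. To reach a contradiction I would invoke that this is impossible for infinite $k$ by a brief case split: in characteristic $0$, $\QQ^\times\leq k^\times$ already has infinite $\ZZ$-rank because of the rational primes; in characteristic $p$ with $k$ algebraic over $\IF_p$, $k^\times$ is torsion, so finite generation would force $k$ finite; and if $k$ has positive transcendence degree over $\IF_p$, the infinitely many monic irreducibles in $\IF_p[t]\subseteq k$ supply infinitely many multiplicatively independent elements of $k^\times$. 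The only step I do not expect to be purely routine is the identification $\chi(T(k))=(k^\times)^d$ via the primitivity decomposition; everything else is formal torus theory or elementary arithmetic.
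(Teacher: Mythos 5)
Your argument is correct, and it reaches the same contradiction as the paper -- namely that a proper Zariski closure of $S$ together with finite generation of $T(k)/S$ would force $k^\times$ to be finitely generated, which fails for every infinite field -- but it gets there by a dual route. The paper works on the subgroup side: it invokes Borel--Tits (Corollary 1.9\,b)) to see that $\overline{S}^0$ is a $k$-split subtorus of strictly smaller dimension, then embeds a copy of $k^\times$ into $T(k)/(S\cap\overline{S}^0(k))$ (implicitly using that a split subtorus of a split torus is a direct factor), notes that this quotient is finitely generated because $S\cap\overline{S}^0(k)$ has finite index in $S$, and concludes since subgroups of finitely generated abelian groups are finitely generated. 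You instead work on the character side: you kill the finite component group $\overline{S}/\overline{S}^0$ by raising a character of $T/\overline{S}^0$ to a suitable power, use torsion-freeness of $X^*(T)$ to keep it non-trivial, and then compute $\chi(T(k))=(k^\times)^d$ via the primitive decomposition $\chi=d\chi'$ (Smith normal form), so that $(k^\times)^d$ becomes a quotient of $T(k)/S$, and the exact sequence with finite kernel $\mu_d(k)$ returns finite generation of $k^\times$ itself. Your version buys independence from the cited rationality/splitness result for $\overline{S}^0$ -- you only need that every character of a $k$-split torus is $k$-rational, so field-of-definition questions about $\overline{S}$ never arise -- and it also supplies the case-split proof that $k^\times$ is never finitely generated for infinite $k$, a fact the paper uses without comment. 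The paper's version is shorter because it outsources exactly these points to Borel--Tits and to folklore. All the steps you flag as potentially delicate (the identification $\chi(T(k))=(k^\times)^d$, the power trick for the component group, the finiteness of $\mu_d(k)$) do go through as you describe.
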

\begin{proof}
Since $T$ is split over $k$ and $k$ is infinite, $T(k)$ is Zariski dense in $T$. Assume that $\bar{S}\neq T$. Then $\bar{S}$ is defined over $k$ and so is $\bar{S}^0$, which is a $k$-split subtorus of $T$ by \cite[Corollary 1.9 b)]{BorelTitsRG}.
It follows that $\dim \bar{S}^0<\dim T$. Passing to the rational points, we find that $X:=T(k)/(S\cap \bar{S}^0(k))$ contains a copy of $k^\times$, which is not finitely generated. As $S\cap \bar{S}^0(k)$ has finite index in $S$, $X$ is finitely generated. This is a contradiction since 
any subgroup of a finitely generated abelian group is finitely generated.
\end{proof}

\begin{proposition}\label{group_in_rootgroups}
Let $G$ be a group endowed with a twin root datum $(H,(U_\alpha)_{\alpha\in \Phi(W,S)})$. Let $L$ be a subgroup of $G$ such that for each $l \in L \backslash\{1\}$ there is a root $\beta_l \in \Phi(W,S)$ such that $l \in U_{\beta_l}$. Then there is some $\beta \in \Phi(W,S)$ such that $L \leq U_\beta.$
\end{proposition}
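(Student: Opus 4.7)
The plan is to use the Birkhoff/Bruhat structure together with Weyl group combinatorics. The key background fact is that in any group endowed with a twin root datum one has $U_+ \cap U_- = \{1\}$; this is immediate from the twin BN-pair structure, specifically from $B_+ \cap B_- = H$ together with the semidirect decompositions $B_\pm = H \ltimes U_\pm$. The strategy is first to force $L$ into a single half, and then to use the $W$-action to force it into a single root group.

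For the first step I would argue that $L$ is contained entirely in $U_+$ or entirely in $U_-$. Suppose for contradiction that there exist nontrivial $l_+ \in L \cap U_\alpha$ with $\alpha > 0$ and $l_- \in L \cap U_\beta$ with $\beta < 0$. The product $l_+ l_- \in L$ lies in some $U_\gamma$ by hypothesis. If $\gamma > 0$, then $l_- = l_+^{-1}(l_+ l_-) \in U_+$, forcing $l_- \in U_+ \cap U_- = \{1\}$, contradicting nontriviality; the case $\gamma < 0$ is symmetric. Hence all the $\beta_l$ have the same sign.

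Without loss of generality assume $L \leq U_+$. To conclude I want to show that all nontrivial elements of $L$ lie in a common positive root group. Suppose instead that $l_\alpha \in L \cap U_\alpha$ and $l_\beta \in L \cap U_\beta$ are both nontrivial with $\alpha \neq \beta$ both positive. By standard Coxeter theory, distinct roots of $(W,S)$ correspond to distinct half-apartments of the Coxeter complex, so there exists $w \in W$ for which $w\alpha$ and $w\beta$ have opposite signs. Using (TRD 3) iteratively along a (reduced) expression for $w$, choose a lift $\tilde w \in G$ as a product of $\mu$-maps $m(u)$, so that conjugation by $\tilde w$ sends $U_\gamma$ to $U_{w\gamma}$ for every $\gamma \in \Phi$. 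The conjugate $\tilde w L \tilde w^{-1}$ still satisfies the hypothesis of the proposition (each of its nontrivial elements sits in $U_{w\beta_l}$ for some $l$), yet it contains the nontrivial elements $\tilde w l_\alpha \tilde w^{-1} \in U_{w\alpha}$ and $\tilde w l_\beta \tilde w^{-1} \in U_{w\beta}$, lying in different halves. This contradicts the first step applied to $\tilde w L \tilde w^{-1}$.

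The only slightly delicate ingredient is the Coxeter-theoretic claim that the map $\alpha \mapsto \{w \in W : w\alpha > 0\}$ is injective on $\Phi$ — equivalently, that a root is determined by its associated half-apartment (its wall plus an orientation). This is classical, and once it is in hand the rest of the argument is essentially formal. One minor bookkeeping point is to verify that products of $m(u)$'s really do normalize each $U_\gamma$ correctly and send it to $U_{w\gamma}$, but this is an immediate iteration of the axiom (TRD 3).
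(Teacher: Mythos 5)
Your argument is correct and runs along essentially the same lines as the paper's: in both proofs the heart is the product trick combined with $U_+\cap U_-=\{1\}$ to exclude nontrivial elements of $L$ in root groups of opposite signs, followed by using the Weyl group action on root groups coming from (TRD~3) to reduce the same-sign case to the opposite-sign case. The only difference is organizational: the paper analyzes the triple of roots attached to $x$, $y$, $xy$, conjugates so that one of them becomes simple and then applies the corresponding reflection, whereas you conjugate all of $L$ by a lift of an element $w$ separating the two distinct roots in sign, justified by the classical bijection between roots and half-spaces -- both reductions rest on the same standard Coxeter facts.
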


\begin{proof}
Note first that $\beta_x=\beta_{x^{-1}}$ as $U_{\beta_x}$ is a subgroup and $\beta_x$ is uniquely determined as $U_\alpha \cap U_\beta=1$ for distinct roots $\alpha \neq \beta.$\\
Assume for a contradiction that there are $x,y \in L\backslash\{1\}$ such that $\beta_x \neq \beta_y$, in particular $xy\neq 1.$ Let $\alpha:=\beta_x, \beta:=\beta_y$ and $\gamma:=\beta_{xy}.$\\
This implies that $U_\alpha U_\beta \cap U_\gamma \neq 1.$
Moreover, for any permutation $\pi$ of $\{\alpha, \beta, \gamma\}$, it follows that $U_{\pi (\alpha)} U_{\pi (\beta)} \cap U_{\pi (\gamma)} \neq \{1\}$: If $u_\alpha u_\beta=u_\gamma$, then $u_\beta^{-1}u_\alpha^{-1}=u_\gamma^{-1}$, $u_\beta u_\gamma^{-1}=u_\alpha^{-1}$, and the permutations $(\alpha\, \beta),(\alpha\, \gamma\, \beta)$ generate $\Sym(\{\alpha, \beta,\gamma\}).$ This implies that if two of the three roots coincide, then all roots coincide. So we can suppose that all three roots are distinct. \\
If two of the three roots are positive and the remaining one is negative (or vice versa), we can assume that $\alpha>0,\beta>0$ and $\gamma<0$ since the statement to be proved is invariant under permutations of the roots. But this is a contradiction as $U_+ \cap U_-=\{1\}.$
If all three roots have the same sign, say $\alpha, \beta,\gamma \in \Phi^+$, then choose some $w \in W$ such that $w\gamma=\delta$ is a positive simple root. If $w\alpha$ or $w\beta$ is negative, this is a contradiction by the case just discussed. If $w\alpha, w\beta,w\gamma$ are all positive, then $s_\delta w\alpha>0, s_\delta w\beta >0, s_\delta w_\gamma <0$, which is again a contradiction by the case just discussed.
\end{proof}


\begin{proposition}\label{existenceofsl2}
Let $k$ be a field of characteristic $0$ and let $G$ be a connected reductive algebraic group defined over $k$. Let $g \in G(k) \backslash\{1\}$ be a nontrivial unipotent element. Then there exists a morphism $\varphi\colon \SL_2 \to G$ defined over $k$ such that 
$\varphi(u)=g$ for some unipotent element $u \in \SL_2(k).$
\end{proposition}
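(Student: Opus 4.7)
My plan is to deduce this from the Jacobson--Morozov theorem together with the equivalence, in characteristic $0$, between morphisms of algebraic groups and morphisms of their Lie algebras (for simply connected sources).

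First, I would pass from the group element $g$ to a nonzero nilpotent element of the Lie algebra. Since $g \in G(k)$ is unipotent and $\chark k = 0$, the element $X := \log(g)$ is well-defined (the logarithm series terminates on any faithful finite-dimensional representation) and lies in $\mathfrak{g}(k) := \Lie G$, and it is a nonzero nilpotent element. Because the Lie algebra of the connected center of the reductive group $G$ is toral and contains no nonzero nilpotents, $X$ automatically lies in the semisimple part $[\mathfrak{g}, \mathfrak{g}]$ of $\mathfrak{g}$, which is defined over $k$.

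Next, I would invoke Jacobson--Morozov over $k$: since $[\mathfrak{g}, \mathfrak{g}]$ is a semisimple Lie algebra over a field of characteristic $0$ and $X$ is a nonzero nilpotent element defined over $k$, there exist $H, Y \in [\mathfrak{g},\mathfrak{g}](k)$ such that $[H, X] = 2X$, $[H, Y] = -2Y$, $[X, Y] = H$. The construction of $H$ and $Y$ reduces to solving inhomogeneous linear equations (after Galois-averaging or simply following the standard proof), so the triple can be chosen over $k$ itself. This yields a Lie algebra homomorphism
\[
d\varphi\colon \mathfrak{sl}_2 \to \mathfrak{g}, \qquad e \mapsto X, \ f \mapsto Y, \ h \mapsto H,
\]
defined over $k$.

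Finally, I would integrate $d\varphi$ to a morphism of algebraic $k$-groups. Here I use that $\SL_2$ is simply connected and that the characteristic is $0$: a Lie algebra homomorphism from $\Lie(\SL_2)$ to the Lie algebra of an algebraic $k$-group $G$ integrates uniquely to an algebraic $k$-group homomorphism $\varphi\colon \SL_2 \to G$. (Concretely, one can check this on a faithful representation of $G$ by using the exponential on the image of $d\varphi$ and the relations in $\SL_2$, or invoke the standard theorem in e.g.\ Borel or Demazure--Gabriel.) For the standard upper-unitriangular element $u = \bigl(\begin{smallmatrix} 1 & 1 \\ 0 & 1 \end{smallmatrix}\bigr) = \exp(e) \in \SL_2(k)$ one then has
\[
\varphi(u) = \exp\bigl(d\varphi(e)\bigr) = \exp(X) = g,
\]
which is the desired conclusion.

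The main obstacle is the integration step: ensuring that the Lie algebra map really does come from an algebraic (not just analytic) morphism of group schemes, and that this morphism is defined over $k$ rather than only over $\bar k$. The Jacobson--Morozov step and the passage $g \leftrightarrow X = \log g$ are formal in characteristic $0$; it is the algebraic integrability that one must cite carefully, which is presumably why the author thanks Brian Conrad for supplying a proof.
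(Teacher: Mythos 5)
Your proof is correct and follows essentially the same route as the paper: pass to the nilpotent element $\log g$, apply Jacobson--Morozov over $k$, and then use characteristic-zero integrability to produce the algebraic $k$-homomorphism. The only cosmetic difference is that the paper integrates the resulting perfect subalgebra via Borel's Corollary 7.9, obtaining a closed $k$-subgroup isomorphic to $\SL_2$ or $\PGL_2$, whereas you integrate the Lie algebra homomorphism $\mathfrak{sl}_2 \to \mathfrak{g}$ directly using the simple connectedness of $\SL_2$, which conveniently avoids the $\PGL_2$ case.
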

\begin{proof}
Let $U:=\overline{\langle u \rangle}$. As $k$ is of characteristic 0, $U$ is a one-dimensional unipotent group which is defined over $k$ since $u \in G(k).$ This implies that $U$ is $k$-isomorphic to ${\bf G}_a$. Let $\mathfrak{u}:=\Lie U$. By the Jacobson-Morozov lemma
(usually stated for {\it semisimple} Lie algebras over a field of characteristic 0, but holding in fact for arbitrary completely reducible subalgebras $\lieg \leq \mathfrak{gl}(V)$, see the original paper \cite[Theorem 3]{Jacobson1951}), there is a three-dimensional Lie subalgebra $x$ which is $k$-isomorphic to $\mathfrak{sl}_2$ and contains $\mathfrak{u}.$ As $\chark k=0$, any perfect Lie subalgebra is the Lie algebra of a closed subgroup $X$ (\cite[Corollary 7.9]{Borel}).  
This translates into the fact that there is a closed subgroup $X\leq G$ defined over $k$ which is $k$-isomorphic to either $\SL_2$ or $\PGL_2$. This implies the claim. 
\end{proof}

Let $G$ be a connected reductive group defined over $k$ which splits over $k$. Let $T$ be a maximal torus and $U$ a unipotent group which is normalized by $T$. Then $\langle T,U \rangle$ is contained in a Borel group $B$, so there is an ordering on the set of roots $\Phi(T,G)$ of $T$ in $G$ such that $U\leq U_+.$ It is then a classical fact (cf. \cite[p.65 l.7]{BorelTits}) that $U$ is generated by the root groups $U_\alpha$ relative to $T$ which are contained in $U$. 
 
We need an analogue of this theorem in case that $G$ is not necessarily split over $k$.

\begin{proposition} \label{positiveordering}
Let $k$ be an infinite field. 
Let $G$ be a connected reductive $k$-group which is $k$-isotropic and let $S\leq G$ be a maximal $k$-split torus. Let $U\leq G$ be a unipotent subgroup defined over $k$ which is normalized by $S$. Then $U$ is contained in $\langle U_\alpha: \alpha >0 \rangle$ for some ordering $>$ of the set of roots $\Phi(S,G)$ of $S$ in $G$.
\end{proposition}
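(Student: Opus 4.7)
The proposition is the $k$-isotropic generalization of the classical fact cited in the paragraph preceding it, which handled the split case. The strategy is to produce a minimal $k$-parabolic subgroup $P_0\leq G$ with $S\leq P_0$ and $U\leq R_u(P_0)$; once such $P_0$ is in hand, the Levi decomposition $P_0 = Z_G(S)\cdot R_u(P_0)$ together with the standard description $R_u(P_0) = \langle U_\alpha : \alpha\in \Phi^+_{P_0}\rangle$, where $\Phi^+_{P_0}\subseteq \Phi(S,G)$ is the positive system of relative $k$-roots determined by $P_0$, gives the proposition with the ordering $\alpha>0 \iff \alpha\in\Phi^+_{P_0}$.

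To locate $P_0$, I would appeal to the Borel--Tits theorem on unipotent subgroups of a reductive $k$-group in its sharp form \cite{BorelTitsRG}: every unipotent $k$-subgroup $U\leq G$ is contained in $R_u(Q)$ for some $k$-parabolic subgroup $Q$, and $Q$ can be arranged so that $N_G(U)\leq Q$. Since $S$ normalizes $U$, this forces $S\leq N_G(U)\leq Q$, and any $k$-parabolic containing $S$ automatically contains $Z_G(S)$ as well, because its Levi factor has the form $Z_G(A)$ for some $k$-split subtorus $A\subseteq S$. Choosing a minimal $k$-parabolic $P_0$ with $Z_G(S)\leq P_0\leq Q$ then yields $R_u(Q)\subseteq R_u(P_0)$, the smaller parabolic having the larger unipotent radical, so that $U\leq R_u(P_0)$ as required.

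The main obstacle is securing the refined form of Borel--Tits (with $N_G(U)\leq Q$) rather than only the plain existence of some $Q$ with $U\leq R_u(Q)$. If only the plain version is available, the alternative is an $S$-weight analysis of $\Lie U=\bigoplus_\chi (\Lie U)_\chi$: the $0$-weight piece is the Lie algebra of a $k$-unipotent subgroup of $Z_G(S)$, whose derived group is $k$-anisotropic by maximality of $S$, hence trivial by Borel--Tits applied within $Z_G(S)$; and no opposite pair $\{\alpha,-\alpha\}$ can appear, since otherwise $U\cap X_\alpha$ with $X_\alpha=Z_G(S)\langle U_\alpha,U_{-\alpha}\rangle$ (a rank-$1$ reductive $k$-group, cf.\ Proposition~\ref{TorusActsOnRootGroups}) would be a $k$-unipotent subgroup of a rank-$1$ reductive group with weights of both signs, again contradicting Borel--Tits. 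Using a $\bar k$-Borel containing $S\cdot U$ together with Galois averaging against a $k$-rational maximal torus $T\supseteq S$, one then extracts a regular cocharacter $\lambda\in X_*(S)$ positive on all $S$-weights of $\Lie U$, and the parabolic $P(\lambda)$ serves as the desired $P_0$.
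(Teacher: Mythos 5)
Your main argument is correct, but it rests on a different key input than the paper's proof. The paper argues more elementarily: since $S\cdot U$ is a connected solvable $k$-split subgroup, it lies in a minimal parabolic $k$-subgroup $P$; writing the Levi decomposition $P=Z(S)\ltimes P_u$ and using that $Z(S)(k)$ contains no nontrivial unipotent elements (because $S$ is a maximal $k$-split torus, in the characteristic-$0$/perfect setting where the proposition is used), every element of $U(k)$ has trivial image in the Levi quotient, hence $U(k)\leq P_u(k)$; Zariski density of $U(k)$ in $U$ then gives $U\leq P_u=\langle U_\alpha:\alpha>0\rangle$. You instead invoke the refined Borel--Tits theorem attaching to a unipotent $k$-subgroup a parabolic $Q$ with $U\leq R_u(Q)$ and $N_G(U)\leq Q$, so that the hypothesis that $S$ normalizes $U$ forces $S\leq Q$, hence $Z_G(S)\leq Q$, and your descent to a minimal $k$-parabolic $P_0$ with $Z_G(S)\leq P_0\leq Q$ and $R_u(Q)\subseteq R_u(P_0)$ is correct. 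What each approach buys: yours is shorter and avoids the density and ``no rational unipotents in the anisotropic centralizer'' steps, at the price of a heavier external theorem --- note that the normalizer-refined statement is not in the 1965 \emph{Groupes r\'eductifs} paper cited in this article but in Borel--Tits' later work on unipotent elements and parabolic subgroups (harmless in characteristic $0$); the paper's route needs only the existence of a minimal $k$-parabolic containing a $k$-split solvable subgroup together with standard facts about $Z(S)$. Your fallback sketch in the second paragraph is not needed and, as written, has gaps (the claim that an opposite pair of weights in $\Lie U$ produces a unipotent subgroup of $X_\alpha$ with weights of both signs, and the Galois-averaging extraction of a cocharacter $\lambda$, are not justified), but since your main route stands on its own this does not affect the verdict.
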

\begin{proof}
Let $P$ be a minimal parabolic subgroup defined over $k$ which contains $U$ and $S$. Then $P$ has a Levi decomposition $P=Z(S)P_u$, where 
$Z(S)$ is the centralizer of $S$ and $P_u$ is the unipotent radical of $P$. Since $S$ is maximal $k$-split, $Z(S)(k)$ does not contain any unipotent elements. This implies that $U(k) \leq P_u(k)$ and since $U(k)$ is dense in $U$, it follows that $U \leq P_u$, which implies the claim. \end{proof}


\index{diagonalizable subgroup!}
\index{diagonalizable subgroup!regular}
Recall that a subgroup $S$ of an almost split Kac--Moody $k$-group $G(k)$ is called {\bf diagonalizable} (over $k$) if 
there is $g \in G(k)$ such that $gSg^{-1}\leq T_d(k)$, where $T_d$ is the standard maximal $k$-split torus of $G(k)$.\\
Furthermore, a diagonalizable subgroup $S$ is called {\bf regular} if the fixed point set of the $S$-action on the associated twin building consists of a single twin apartment.\\

Among all diagonalizable subgroups of $G$, regular subgroups can be characterized purely group-theoretically. The following characterization can be found in \cite[Proposition 5.13] {PECAbstract} for split Kac--Moody groups. We generalize this to almost split Kac--Moody groups, where care has to be taken of the anisotropic kernel.

\begin{lemma}\label{characterizationregular}
Let $k$ be a field of characteristic 0 and let $G$ be an almost split Kac--Moody $k$-group. Let $S\leq G(k)$ be a diagonalizable subgroup. Then $S$ is regular if and only if $S$ does not centralize a subgroup $X \leq G(k)$ isomorphic to either $\SL_2(\QQ)$ or $\PSL_2(\QQ).$
\end{lemma}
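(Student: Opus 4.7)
The plan is to prove both directions of the equivalence separately, using Proposition~\ref{TorusActsOnRootGroups} to link regularity of $S$ with the characters of $T_d$, and Proposition~\ref{existenceofsl2} to produce $\SL_2$-subgroups.

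For the ``if'' direction, I will conjugate so that $S \leq T_d(k)$. The key observation I intend to exploit is that $S$ fails to be regular precisely when $S$ is contained in the kernel of some relative root character $\alpha \in \Phi_k$. Indeed, by Proposition~\ref{TorusActsOnRootGroups}(iii), $T_d$ acts on $V_\alpha$ via $\alpha$ (and via $2\alpha$ on $\mathscr Z(V_\alpha)$ in the metabelian case), so $S \leq \ker\alpha$ is equivalent to $S$ centralizing $V_\alpha$ in characteristic zero, and the same character condition shows $S$ then also centralizes $V_{-\alpha}$. Since $T_d$ is central in the anisotropic kernel $Z$, it will follow that $S$ centralizes the full rank-$1$ reductive subgroup $X_\alpha = Z\langle V_\alpha, V_{-\alpha}\rangle$. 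Now $X_\alpha$ is $k$-isotropic of split-semisimple rank $1$, so in characteristic zero it contains a nontrivial unipotent $k$-rational element; applying Proposition~\ref{existenceofsl2} produces a closed $k$-subgroup of $X_\alpha$ that is $k$-isomorphic to $\SL_2$ or $\PGL_2$, and taking $\QQ$-points exhibits $\SL_2(\QQ)$ or $\PSL_2(\QQ) \leq \PGL_2(\QQ)$ inside $G(k)$, centralized by $S$.

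For the ``only if'' direction I argue by contradiction: assume $S$ is regular and that $X \leq G(k)$ is centralized by $S$ with $X \cong \SL_2(\QQ)$ or $\PSL_2(\QQ)$. Regularity gives $\Fix(S) = A_k$, and since $X$ commutes with $S$ it stabilizes $A_k$ setwise, so $X \leq N^\natural(k)$, which fits in $1 \to Z(k) \to N^\natural(k) \to W^\natural \to 1$, where $W^\natural$ is the relative Weyl group. The group $W^\natural$ is a finitely generated Coxeter group, hence linear via its faithful Tits representation, and therefore residually finite by Mal'cev's theorem. The kernel $X \cap Z(k)$ of the induced map $X \to W^\natural$ is normal in $X$. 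By simplicity of $\PSL_2(\QQ)$ and the fact that $\SL_2(\QQ)$ has $\{\pm I\}$ as its only nontrivial proper normal subgroup, either $X \leq Z(k)$, or $X/(X \cap Z(k))$ is one of $\SL_2(\QQ), \PSL_2(\QQ)$ and embeds into $W^\natural$. In the first case, $X$ contains nontrivial unipotent elements; but $[Z,Z]$ is $k$-anisotropic, so the classical Borel--Tits result implies $[Z,Z](k)$ has no nontrivial unipotent $k$-points in characteristic zero, and since any unipotent of $Z(k)$ lies in $[Z,Z](k)$ (the reductive quotient $Z/[Z,Z]$ being a torus), this is a contradiction. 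In the second case, neither $\SL_2(\QQ)$ nor $\PSL_2(\QQ)$ is residually finite, because $\PSL_2(\QQ)$ is infinite simple and $\PSL_2(\QQ)$ is the only nontrivial proper quotient of $\SL_2(\QQ)$; this contradicts the embedding into the residually finite group $W^\natural$.

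The main obstacle I expect is being rigorous about the equivalence between $S$ not being regular and $S$ lying in some character kernel $\ker\alpha$: the forward direction requires showing that any extra $S$-fixed point in $\Delta_{cone}$ beyond $A_k$ yields a root group commuting with $S$, which uses the Moufang-style action of root groups on panels of $A_k$ together with the identification of $Z(k)$ as the pointwise fixator of $A_k$. The remaining ingredients -- the structural theorem for $X_\alpha$, the production of an $\SL_2$ through a unipotent element, the finite generation and linearity of $W^\natural$, and the Borel--Tits fact about unipotents in anisotropic groups -- are either established earlier in the paper or standard.
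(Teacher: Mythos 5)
Your overall architecture (geometric fixed-point argument for one direction, reduction to the stabilizer of $A_k$ and the normal subgroup structure of $\SL_2(\QQ)$, $\PSL_2(\QQ)$ for the other) matches the paper's, but there is a genuine gap in the case $X\leq Z(k)$ of your ``only if'' direction. You assert that $X$ then ``contains nontrivial unipotent elements'' and invoke the absence of unipotent $k$-points in the anisotropic derived group. However, $X$ is only an \emph{abstract} copy of $\SL_2(\QQ)$ or $\PSL_2(\QQ)$ inside $Z(k)$; nothing guarantees that the images of unipotent matrices are unipotent elements of the algebraic group $Z$. Indeed $(\QQ,+)$ embeds abstractly into the rational points of a torus (e.g.\ into $\RR^\times$), so a one-parameter ``unipotent'' of $X$ could a priori consist of semisimple elements, and your contradiction evaporates. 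This is exactly the point where the paper has to do real work: it composes with $\Ad_\Omega$, uses that any abstract representation of $\SL_2(\QQ)$ into an algebraic group over a field of characteristic $0$ is rational and defined over $k$ (\cite[Lemma 5.9]{PECAbstract}), concludes from anisotropy that this rational representation is trivial, and only then gets the contradiction from the fact that $\ker \Ad_\Omega$ is abelian while $X$ is not. Without some such rationality input your Case 1 does not close. (Your Case 2, via residual finiteness of the finitely generated Coxeter group $W^\natural$, is a correct alternative to the paper's argument that the image of $X$ in the Weyl group is finite and that $\SL_2(\QQ)$, $\PSL_2(\QQ)$ have no proper finite-index subgroups.)

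In the ``if'' direction your key claim that $S$ fails to be regular \emph{precisely} when $S\leq\ker\alpha$ for some $\alpha\in\Phi_k$ is too strong when $V_\alpha$ is metabelian. The fixed-chamber argument (which you correctly identify as the main obstacle, but do not carry out -- it is the minimal-gallery/Moufang-panel argument in the paper) only yields that $S$ centralizes some nontrivial $v\in V_\alpha$; if $v\in\mathscr Z(V_\alpha)$ this gives $2\alpha(S)=1$, i.e.\ $S$ centralizes $\mathscr Z(V_\alpha)$, but not $\alpha(S)=1$ (values $-1$ are possible in characteristic $0$). In that situation $S$ need not centralize all of $X_\alpha$, so the $\SL_2$ you produce via Proposition \ref{existenceofsl2} through an arbitrary unipotent of $X_\alpha(k)$ need not be centralized by $S$. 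The repair is the paper's route: $S$ centralizes $\mathscr Z(V_\alpha)$ and $\mathscr Z(V_{-\alpha})$, and Theorem \ref{BorelTitsSplit} applied to lines in these centres produces a split rank-one subgroup generated by $S$-centralized root groups, hence an $S$-centralized copy of $\SL_2(k)$ or $\PGL_2(k)$, into which one puts the $\QQ$-points. So your plan for this direction is salvageable, but as written both the claimed character equivalence and the centralization of the constructed $\SL_2$ are gaps.
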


\begin{proof}
Without loss of generality we may assume that $S$ is contained in the standard maximal $k$-split torus $T_d(k)$. Suppose first that $S$ is regular and centralizes $X$. As $X$ has a fixed point in both $\Delta_+$ and $\Delta_-$ and is normalized by $S$, these points can be assumed to lie in the standard twin apartment $A_k.$ As $X$ normalizes $S$, it must stabilize $A_k$.
Hence there is a homomorphism $\Psi\colon X \to W=\Stab_G(A_k)/\Fix_G(A_k).$ $\Psi(X)$ then is a finite group as it is a subgroup of a point stabilizer, so a finite index subgroup $X'\leq X$ is contained in the anisotropic kernel $\Fix_G(A_k)=Z(k).$ Then $X'=X$ as $\PSL_2(\QQ)$ is simple and $\SL_2(\QQ)$ does not have a proper finite index subgroup either. (Indeed, since $U_+(\QQ)$ and $U_-(\QQ)$ are divisible, any finite index subgroup $N \leq \SL_2(\QQ)$ contains $U_+(\QQ)$ and $U_-(\QQ)$, hence is equal to $\SL_2(\QQ)$.) \\
Postcomposing with the adjoint representation $\Ad_\Omega$, where $(A_k,F,-F)$ is a rational standardisation and $\Omega:=\{F,-F\}$, there is a homomorphism $$X \to \Ad_\Omega(Z(k))$$ which induces a representation of $\SL_2(\QQ)$. This representation is rational and defined over $k$ by \cite[Lemma 5.9]{PECAbstract}. Since the target group is anisotropic over $k$ and therefore does not contain $k$-rational unipotent elements, this homomorphism must be trivial. Then $X \leq \ker \Ad_\Omega$, which is a contradiction since the latter group is abelian.\\ 
Conversely, suppose that $S$ fixes a point $x \not \in A_k$, without loss of generality suppose that $x \in \Delta_+.$ Then there is a panel $E$ of $A_k$ and a chamber $C_1 \not\subseteq A_k$ which has $E$ as a panel and is fixed by $S$. Indeed, let $\GG=(C_0, C_1, \ldots, C_n)$ be a gallery such that $C_0 \in A_k, C_n$ contains $x$ and $\GG$ is of minimal length among all such galleries. Then $n \geq 1$ since $x \not\in A_k$, and $C_1$ is fixed by $S$ since the $S$-action is type-preserving.\\
Let $E=C_0 \cap C_1$ and  let $\alpha$ be the corresponding root of $A_k$ determined by $C_0$ and $E$. The root group $V_\alpha \leq G(k)$ parametrizes the chambers which have $E$ as a panel and which are different from $C_0$. 
Since $S$ fixes $A_k$ and $C_1 \not\subseteq A_k$, there are three chambers of the $E$-panel fixed by $S$. This means that there is some non-trivial $v \in V_\alpha$ centralized by $S$. If $v \in V_\alpha \backslash \mathscr Z(V_\alpha)$, this implies that $S$ centralizes the entire group $V_\alpha$, if $v \in \mathscr Z(V_\alpha)$, this implies at least that $S$ centralizes $\mathscr Z(V_\alpha)$ (recall that the action of the split torus is via a character on both  $V_\alpha/\mathscr Z(V_\alpha)$ and $\mathscr Z(V_\alpha)$).\\
In either case, $S$ centralizes $\mathscr Z(V_\alpha)$ and also $\mathscr Z(V_{-\alpha}).$ Hence $S$ centralizes the group $T_d(k) \langle \mathscr Z(V_\alpha),\mathscr Z(V_{-\alpha})\rangle$. But this group contains a split semisimple group of rank 1 by Theorem \ref{BorelTitsSplit}, i.e. either $\SL_2(k)$ or $\PGL_2(k)$. In both cases the claim follows.  \end{proof}


\begin{lemma}\label{AbelianFG}
Let $(W, S)$ be a finitely generated Coxeter group and let $A\leq W$ be
a solvable subgroup. Then $A$ is finitely generated.
\end{lemma}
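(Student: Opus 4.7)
The strategy is to realise the finitely generated Coxeter group $W$ as a CAT(0) group via the Davis--Moussong complex and then invoke the Solvable Subgroup Theorem of Bridson--Haefliger.

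First, recall that for any finitely generated Coxeter system $(W,S)$, Moussong's theorem equips the associated Davis complex $\Sigma = \Sigma_{W,S}$ with a piecewise-Euclidean, locally finite polyhedral structure whose canonical length metric is complete and CAT(0). The group $W$ acts on $\Sigma$ by cellular isometries, properly discontinuously and cocompactly; in particular, $\Sigma$ has finite covering dimension, and $W$ is a CAT(0) group in the sense of Bridson--Haefliger. (This CAT(0) realisation is in fact already alluded to in the discussion of buildings in Section 2 of the paper, applied here to the thin building attached to $(W,S)$.)

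Second, invoke the Solvable Subgroup Theorem (Bridson--Haefliger, \emph{Metric Spaces of Non-Positive Curvature}, Theorem II.7.8): if a group $G$ acts properly and cocompactly by isometries on a complete CAT(0) space of finite covering dimension $n$, then every solvable subgroup of $G$ is finitely generated and contains a free abelian subgroup of finite index of rank at most $n$. Applying this with $G = W$ and $X = \Sigma$, we conclude directly that the solvable subgroup $A \leq W$ is finitely generated, as required.

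The main technical point to check is that the hypotheses of the Solvable Subgroup Theorem are verified; these are standard properties of the Davis--Moussong realisation, so no real obstacle arises. Note in particular that $A$ itself is \emph{not} required to act cocompactly: the theorem gives the conclusion for every solvable subgroup as soon as the ambient group acts cocompactly. As a self-contained alternative avoiding an external black box, one could instead analyse $A$ inside $\GL(V)$ via the standard linear representation $\rho\colon W \to \GL(V)$, combine the Lie--Kolchin trichotomy on the Zariski closure $\overline{\rho(A)}$ with the action of $W$ on the Tits cone to bound the dimension of the translation subspace of $A$, and then show that a finite-index subgroup of $A$ is free abelian of bounded rank; but the CAT(0) route above is by far the most transparent.
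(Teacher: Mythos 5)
Your proof is correct and takes essentially the same route as the paper: the paper also realizes $W$ via its (Davis--Moussong) CAT(0) realization, notes that the action is proper and cocompact on a finite-dimensional space, and then quotes Bridson--Haefliger. The only cosmetic difference is the reference used (you cite the Solvable Subgroup Theorem II.7.8, the paper cites the summary theorem on groups acting properly cocompactly on CAT(0) spaces), which belongs to the same circle of results.
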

\begin{proof}
Let $X$ be the CAT(0) realization of $W$. Since $W$ is finitely generated, $X$ is
finite-dimensional by construction and $W$ acts on $X$ properly and cocompactly. The
conclusion follows now from \cite[p. 439 Theorem I.1 (3),(4)]{BridsonHaefliger}.
\end{proof}

\subsection{Proof of the main theorem}~\\
{\bf Setting.}
Let $k,k'$ be two fields of characteristic 0 and let $G,G'$ be two 2-spherical almost split Kac--Moody groups over $k,k'$, respectively. Let $G(k)$, $G'(k')$ denote their rational points and suppose that $\varphi\colon G(k) \to G'(k')$ is an abstract isomorphism.\\
Let 

\begin{itemize}
\item $Z(k)\leq G(k), Z'(k') \leq G'(k')$ denote the respective anisotropic kernels of $G,G'$.
\item $T_d(k) \leq Z(k), T_d'(k') \leq Z'(k')$ denote the respective maximal split tori. 
\item  $(W,S)$, $(W',S')$ denote the respective Weyl groups, and $\Phi, \Phi'$ the sets of roots with simple roots $\Pi, \Pi'$.
\item $(U_\alpha)_{\alpha \in \Phi}$ denote the rational root groups of $G$, and $(V_\beta)_{\beta \in \Phi'}$ the rational root groups of $G'.$
\item $\Delta$, $\Delta'$ denote the twin buildings associated to $G$ and $G'$. 
\item $\mA$ and $\mA'$ denote the standard twin apartments fixed by $Z(k),Z'(k')$ respectively.
\end{itemize}

{\bf Strategy of proof.} The proof strategy can be outlined as follows:\\
{\scshape Step 1.} Since $G(k)$ is assumed to be 2-spherical, $G(k)$ contains a maximal split subgroup $F(k)$ containing $T_d(k).$ A generalization of arguments from \cite{PECAbstract} can be used to exhibit a subgroup $S(\QQ) \leq T_d(k)$ with the property that $S(\QQ)$ fixes precisely $\mA$ and $\varphi(S(\QQ))$ fixes precisely a twin apartment $\mA''$ of $\Delta'.$ By postcomposing $\varphi$ with an inner automorphism if necessary, we assume
that $\mA''=\mA'.$ \\

{\scshape Step 2.} From the existence of $S(\QQ)$, which is in some sense a small subgroup of the split torus, we deduce the existence of two large subgroups $S_1 \leq T_d(k)$, $S_2 \leq T_d'(k')$ such that $\varphi(S_1) \leq Z'(k'),\varphi^{-1}(S_2) \leq Z(k).$ In particular, $\varphi(S_1)$ normalizes all root groups $V_\beta$ and $\varphi^{-1}(S_2)$ normalizes all root groups $U_\alpha.$ \\

{\scshape Step 3.} We now focus on a root group $U_\alpha$. Assume first that $U_\alpha$ is abelian (see Step 5 for the general case). 
Then for $u \in U_\alpha$, we show that $\varphi(u) \in L^J$ for some Levi factor $L^J$ of finite type, which depends a priori on $u$.\\
Using the groups $S_1$ and $S_2$ we show that $\varphi(u)$ actually is a unipotent element which is contained in a group $V_{\beta_1} \cdots V_{\beta_r} \leq L^J$.\\

{\scshape Step 4.} Now root groups in a spherical Levi factor can be distinguished by the torus action. Again using the groups $S_1$ and $S_2$, it follows that with the above notation, $r=1$, i.e. for each $u \in U_\alpha$ there is some $\beta_u \in \Phi'$ such that $\varphi(u) \in V_{\beta_u}.$\\
Since $U_\alpha$ is a group, it follows that $\varphi(U_\alpha)\leq V_{\beta(\alpha)}$ for some single $\beta(\alpha) \in \Phi'$. \\

{\scshape Step 5.} If $U_\alpha$ is not abelian, the analysis of steps 3 and 4 still applies to $\mathscr Z(U_\alpha)$. Let $u_1, \ldots, u_r$ be elements such that the canonical images of the $u_i$ are a $k$-basis for $U_\alpha/\mathscr Z(U_\alpha).$ Arguing as in steps 3 and 4 for the groups $k\cdot u_i$, together with the knowledge about $\varphi(\mathscr Z(U_\alpha))$ allows to conclude that also in this case $\varphi(U_\alpha)$ is contained in a single root group $V_{\beta(\alpha)}.$ \\

{\scshape Step 6.} By symmetry, each root group $V_\beta$ satisfies $\varphi^{-1}(V_\beta)\leq U_{\alpha(\beta)}$, so actually equality holds. 
This allows to conclude that $\varphi$ maps root groups to root groups and preserves the anisotropic kernel. \\

The following lemma is a key step in comparing the twin root data of $G$ and $G'$. 
 
\begin{lemma}
There exists a regular diagonalizable subgroup $S(\QQ)\leq T_d(k)$ such that $\varphi(S(\QQ))$ again is regular diagonalizable. 
\end{lemma}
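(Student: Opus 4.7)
The plan is to adapt Caprace's split-case construction (\cite[Proposition 5.13]{PECAbstract}) by working inside the maximal split subgroup $F \leq G(k)$ provided by Theorem \ref{maximalsplitsubgroup}. The subgroup $F$ contains $T_d(k)$ and is endowed with a locally split twin root datum whose Coxeter type coincides with that of $G(k)$; in particular, to each simple relative root $\alpha_i$ there is an attached coroot cocharacter $h_i \colon k^\times \to T_d(k)$. Since $\chark k = 0$, I would define
\[
S(\QQ) := \langle h_i(q) : i \in I,\; q \in \QQ^\times \rangle \;\leq\; T_d(k),
\]
i.e.\ the subgroup of $T_d(k)$ generated by the rational values of the cocharacters of $F$. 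By construction $S(\QQ)$ is diagonalizable in $G(k)$.

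Next I would verify regularity of $S(\QQ)$ via Lemma \ref{characterizationregular}. Suppose $S(\QQ)$ fixed a point of $\Delta$ outside $\mA$; the chamber-by-chamber analysis in the proof of Lemma \ref{characterizationregular} yields a relative root $\alpha \in \Phi_k$ such that $S(\QQ)$ centralises either $V_\alpha$ or $\mathscr Z(V_\alpha)$. By Proposition \ref{TorusActsOnRootGroups}(iii) this forces the character $\alpha$ (or $2\alpha$) of $T_d$ to vanish on $S(\QQ)$. Evaluating $\alpha(h_j(q)) = q^{\alpha(h_j)}$ and requiring the result to equal $1$ for every $q \in \QQ^\times$ and every $j$ would force $\alpha(h_j) = 0$ for all $j$; but then $s_j\alpha = \alpha - \alpha(h_j)\alpha_j = \alpha$ for all simple reflections $s_j$, making the real root $\alpha = w\alpha_i$ invariant under $W_k$, which after conjugation gives $s_i\alpha_i = \alpha_i$, contradicting $s_i\alpha_i = -\alpha_i$ in characteristic $0$.

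The central obstacle is to show that $\varphi(S(\QQ))$ is diagonalizable in $G'(k')$. Once this is achieved, regularity of $\varphi(S(\QQ))$ is automatic: any $\SL_2(\QQ)$ or $\PSL_2(\QQ)$ centralised by $\varphi(S(\QQ))$ pulls back through $\varphi^{-1}$ to a subgroup of the same type centralised by $S(\QQ)$, contradicting the already established regularity of $S(\QQ)$ via Lemma \ref{characterizationregular}. To establish diagonalizability, the plan is first to show that $\varphi(S(\QQ))$ is \emph{bounded}, i.e.\ fixes a point in both halves of the cone realisation $\Delta'_{cone}$. Once boundedness is known, Proposition \ref{boundedsubspace} furnishes a finite-dimensional $\Ad \varphi(S(\QQ))$-invariant subspace whose Zariski closure admits a Levi decomposition; it will then remain to rule out any unipotent radical contribution and to identify the Levi part with a subtorus of $\Ad T_d'(k')$, after which a suitable conjugation brings $\varphi(S(\QQ))$ into $T_d'(k')$.

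I expect the boundedness step to be the main technical obstacle, since boundedness depends on the BN-pair structure of $G'(k')$ and is not a priori preserved by an abstract isomorphism. A promising route is to exploit the abundant torsion in $S(\QQ)$: the order-$2$ elements $h_i(-1)$ generate a countable locally finite elementary abelian $2$-subgroup $S_2(\QQ) \leq S(\QQ)$, whose image under $\varphi$ is again locally finite, hence fixes a point in each CAT(0) half of $\Delta'$ by a Bruhat--Tits style argument. One would then propagate this fixed-point property from $S_2(\QQ)$ to all of $S(\QQ)$ by using that $\varphi(S(\QQ))$ centralises $\varphi(S_2(\QQ))$ and permutes its fixed point set; combining this with the regularity of $S(\QQ)$ transported through $\varphi$ should pin $\varphi(S(\QQ))$ down to the pointwise stabiliser of a twin apartment $\mA''$ of $\Delta'$, completing the required diagonalization up to an inner automorphism.
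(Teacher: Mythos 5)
Your construction of $S(\QQ)$ and your regularity argument for it essentially coincide with the paper's: the paper takes $S(\QQ)=\langle D_\alpha:\alpha\in\Pi\rangle$ with $D_\alpha=\langle\psi_\alpha(\diag(x,x^{-1})):x\in\QQ^\times\rangle$ inside the maximal split subgroup $F$, which is the same thing as your rational cocharacter values $h_i(q)$, and it rules out non-regularity through Lemma \ref{characterizationregular} and the Weyl-equivariance of the characters, just as you do. Likewise, your observation that regularity of $\varphi(S(\QQ))$ follows from its diagonalizability plus the purely group-theoretic characterization is exactly the paper's Claim 3.

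The genuine gap is in the one step you yourself flag as the obstacle: boundedness of $\varphi(S(\QQ))$. Your torsion route does not close it. The elements $h_i(-1)$ generate only a \emph{finite} elementary abelian $2$-group, and while its image does fix points in both CAT(0) halves, nothing propagates from there: the fixed-point set of this finite group is a convex subcomplex that is in general far larger than a twin apartment (any $V_\beta$ with $\beta(h_i(-1))=1$ for all $i$ acts trivially on it), and a group that merely centralizes $\varphi(S_2(\QQ))$, hence stabilizes this set, can perfectly well act on it without fixed points -- an abelian group can act on a CAT(0) space by hyperbolic isometries, and ruling this out for $\varphi(S(\QQ))$ is precisely what boundedness means. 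Moreover, your appeal to ``regularity of $S(\QQ)$ transported through $\varphi$'' is circular: Lemma \ref{characterizationregular} is a statement about \emph{diagonalizable} subgroups (its proof uses $S\leq T_d(k)$), so it cannot be invoked for $\varphi(S(\QQ))$ before diagonalizability is established. The paper's mechanism is different and is the essential missing idea: each generator of $S(\QQ)$ lies in $\psi_\alpha(\SL_2(\QQ))$, and the companion result \cite{HainkeEmbedding} shows that the image of \emph{any} abstract homomorphism $\SL_2(\QQ)\to G'(k')$ is bounded (this exploits divisibility of the unipotents of $\SL_2(\QQ)$ and its bounded generation); since $S(\QQ)$ is a product of the finitely many commuting $D_\alpha$, its image is bounded. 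After that, the paper applies $\Ad_\Omega$ on a balanced fixed set, and uses the rationality of the induced $\SL_2(\QQ)$-representations to see that the Zariski closure consists of semisimple elements, is a torus, and is $k'$-split; this rationality input is also what you would need to ``rule out any unipotent radical contribution'' in your Levi-decomposition step, and it is not supplied by your sketch. So the overall architecture of your proof is right, but the central boundedness/semisimplicity argument must be replaced by the $\SL_2(\QQ)$-image argument (or an equivalent), not by the finite $2$-torsion propagation.
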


\begin{proof} Fix a maximal split subgroup $F(k)$ of $G(k)$ and let $(T_d(k),(F_\alpha(k))_{\alpha \in \Phi})$ denote the associated twin root datum. 
Then each rank 2 subgroup $F_{\alpha\beta}(k):=T_d(k) \langle F_{\pm \alpha}(k), F_{\pm \beta}(k)\rangle$ coincides with the $k$-points of a split reductive group of semisimple rank 2. Since these groups are defined over $\ZZ$, it is possible to consider $F(\QQ)$, the $\QQ$-points of $F$. 
More precisely, for each root $\alpha \in \Phi$ let $f_\alpha\colon (k,+) \to F_\alpha(k)$ denote the corresponding isomorphism and $t\colon (k^\times)^n \to T_d(k)$ the canonical isomorphism. Then $F(\QQ):=t((\QQ^\times)^n) \cdot \langle f_\alpha(\QQ): \alpha \in \Phi \rangle.$\\
For each simple root $\alpha \in \Pi$ let $\psi_\alpha\colon \SL_2(\QQ) \to \langle F_\alpha(\QQ),F_{-\alpha}(\QQ) \rangle$ denote the canonical homomorphism. 
Let $D_\alpha:=\langle \psi_\alpha(\diag (x,x^{-1})): x \in \QQ^\times \rangle$ and let $S(\QQ):=\langle D_\alpha: \alpha \in \Delta\rangle.$\\
 
{\it Claim 1. $S(\QQ)$ is regular.} Note first that $S(\QQ)$ is invariant under the Weyl group. Indeed, it suffices to check that $s_\alpha(D_\beta) \leq S(\QQ)$ for two simple roots $\alpha,\beta$, and this can be verified in $F_{\alpha\beta}$ where it follows from the explicit description of the Weyl group action on the torus in a reductive group. Assume that $S(\QQ)$ is not regular. Then from the proof of Lemma \ref{characterizationregular} it follows that there is a root $\alpha$ such that $S(\QQ)$ centralizes $\mathscr Z(V_\alpha)$, i.e. the character $2\alpha$ vanishes on $S(\QQ)$. Write $\alpha=w\alpha_i$ for some $w \in W$ and a simple root $\alpha_i.$ Then 2$\alpha_i$ vanishes on $S(\QQ)$ by the Weyl group invariance of $S(\QQ)$, but this is a contradiction since $S(\QQ)$ contains $D_{\alpha_i}.$\\

{\it Claim 2. $\varphi(S(\QQ))$ is diagonalizable over $k'$}. Since $S(\QQ)$ is boundedly generated by $(D_\alpha)_{\alpha \in \Pi}$ and $\varphi(D_\alpha) \leq \varphi \circ \psi_\alpha(\SL_2(\QQ))$, it follows that $\varphi(S(\QQ))$ is bounded (see \cite[Section 2]{HainkeEmbedding}.  
Let $\Omega \subseteq \Delta'$ denote a balanced subset which is fixed by $\varphi(S(\QQ)).$ \\
Let $\overline{S(\QQ)}$ be the Zariski closure of $\Ad_\Omega(\varphi(S(\QQ))).$ As $S(\QQ)$ is commutative, so is $\overline{S(\QQ)}.$ Note that $\overline{S(\QQ)}$ is connected as it is generated by connected subgroups. \\

By \cite[3.1.1]{Springer}, $S:=\overline{S(\QQ)}$ is the direct product of its semisimple and its unipotent elements:
$S=S_s \times S_u$. Since the abstract representation $\rho:=\Ad_\Omega \circ \varphi \circ \psi_\alpha$ actually is rational, it follows that the image of each $S_\alpha(\QQ)$ consists of semisimple elements only, i.e. is contained in $S_s.$ \\
In particular, $S$ is a torus since it is connected and contains semisimple elements only. Clearly, $S$ is defined over $k'$. It 
remains to be checked that $S$ is split over $k'$. 
Let $g \in S(\QQ)$ be of infinite order. Since $g$ is contained in a $k$-split torus, the Zariski closure $S_g$ of $\langle g \rangle$ is again a $k$-split torus by \cite[Proposition 1.9 b)]{BorelTitsRG}. By induction, $S/S_g$ is a $k$-split torus, from which the result again follows by \cite[Proposition 1.9 b)]{BorelTitsRG}.
This implies the claim.\\ 


%

{\it Claim 3. $\varphi(S(\QQ))$ is regular diagonalizable.} This is a direct consequence of the group theoretic characterization of regular diagonalizable subgroups, Lemma \ref{characterizationregular}.
\end{proof}


\begin{remark} If $K$ is algebraically closed and $G$ is a split Kac--Moody group over $K$, it is even possible to exhibit \emph {finite} regular diagonalizable groups which are mapped to regular diagonalizable subgroups, see \cite{MR2180452}. 
Still in the split case over arbitrary fields, $T':=\ker (\alpha-\beta)$ for suitably chosen roots $\alpha,\beta$ is regular. In particular, the dimension of a regular diagonalizable subgroup can vary arbitrarily.
\end{remark}

\begin{remark} The assumption that $G(k),G'(k')$ be 2-spherical is essentially only used to produce a regular subgroup $S(\QQ) \leq G(k)$
which is again mapped to a regular diagonalizable subgroup in $G'(k').$
\end{remark}

Since maximal $k'$-split tori are conjugate under $G'(k')$ \cite[Theorem 12.5.3]{Remy}, there exists some $x \in G'(k')$ such that 
$(\intg x \circ \varphi) (S(\QQ)) \leq T_d'(k').$ Replacing $\varphi$ by $\intg x \circ \varphi$ if necessary, we assume from now on that $\varphi(S(\QQ)) \leq T_d'(k').$

\begin{proposition} There are subgroups $S_1 \leq T_d(k)$ and $S_2 \leq T_d'(k')$ with the property that $T_d(k)/S_1$, $T_d'(k')/S_2$ both are finitely generated and such that $\varphi(S_1) \leq Z'(k')$, $\varphi^{-1} (S_2) \leq Z(k)$. 
\end{proposition}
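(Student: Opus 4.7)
The plan is to exploit the abelian-ness of $T_d(k)$ together with the regularity of $\varphi(S(\QQ))$ to force $\varphi(T_d(k))$ to stabilize $\mA'$ setwise, and then to extract finite generation of the image in the relative Weyl group via Lemma \ref{AbelianFG}. First, since $T_d(k)$ is abelian and contains $S(\QQ)$, the image $\varphi(T_d(k))$ centralizes $\varphi(S(\QQ))$. By the standing normalization $\varphi(S(\QQ)) \leq T_d'(k') \leq Z'(k')$, the subgroup $\varphi(S(\QQ))$ fixes $\mA'$ pointwise; by regularity, the fixed point set of $\varphi(S(\QQ))$ in $\Delta'$ is exactly $\mA'$. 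Any element centralizing $\varphi(S(\QQ))$ must permute this fixed point set and therefore stabilize $\mA'$ setwise, so $\varphi(T_d(k))$ lies in the setwise stabilizer of $\mA'$ in $G'(k')$.

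Passing to the quotient by $Z'(k') = \Fix_{G'(k')}(\mA')$ yields a homomorphism $\psi \colon \varphi(T_d(k)) \to W'$ into the relative Weyl group of $G'$, whose kernel is $\varphi(T_d(k)) \cap Z'(k')$. The image is an abelian (hence solvable) subgroup of $W'$, and since the Kac--Moody root datum of $G'$ has finite index set, $W'$ is a finitely generated Coxeter group. Lemma \ref{AbelianFG} then forces $\im(\psi)$ to be finitely generated. Setting $S_1 := \varphi^{-1}(\ker \psi)$ gives both $\varphi(S_1) \subseteq Z'(k')$ and $T_d(k)/S_1 \cong \im(\psi)$ finitely generated, as required.

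For $S_2$ I would run the identical argument with $\varphi$ and $\varphi^{-1}$ swapped: since $T_d'(k')$ is abelian and contains $\varphi(S(\QQ))$, the preimage $\varphi^{-1}(T_d'(k'))$ centralizes $S(\QQ) \leq T_d(k)$, which is regular in $G(k)$ with fixed apartment $\mA$. Hence $\varphi^{-1}(T_d'(k'))$ stabilizes $\mA$ setwise, and the analogous homomorphism $\psi' \colon \varphi^{-1}(T_d'(k')) \to W$ has finitely generated image by Lemma \ref{AbelianFG}. Setting $S_2 := \varphi(\ker \psi')$ finishes the argument. The main conceptual step is the geometric one --- namely, that the centralizer of a regular diagonalizable subgroup lies in the setwise stabilizer of the unique fixed apartment; once this is granted, the finite generation comes out formally from the Coxeter-theoretic Lemma \ref{AbelianFG}.
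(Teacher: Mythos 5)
Your argument is correct and is essentially the paper's own proof: both use that $T_d(k)$ (resp.\ $T_d'(k')$) normalizes $S(\QQ)$ (resp.\ $\varphi(S(\QQ))$), so its image under $\varphi$ (resp.\ $\varphi^{-1}$) preserves the unique fixed twin apartment of the regular subgroup, and then define $S_1$, $S_2$ as kernels of the induced action, with finite generation of the quotient coming from Lemma \ref{AbelianFG} applied to the abelian image in the relative Weyl group. No substantive difference from the paper's route.
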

\begin{proof}
As $T_d(k)$ normalizes $S(\QQ)$, $T_d(k)$ acts via $\varphi$ on the fixed point set of $\varphi(S(\QQ))$, which is reduced to $\mA'$. Let $S_1$ denote the kernel of this action, then $\varphi(S_1) \leq Z'(k')$ by definition of the anisotropic kernel as the fixator of $\mA'$. As $\varphi(T_d(k))/\varphi(S_1)$ is an abelian subgroup of $W'$, it is finitely generated by Lemma \ref{AbelianFG}. \\
Similarly, as $T_d'(k')$ normalizes $\varphi(S(\QQ))$, $T_d'(k')$ acts via $\varphi^{-1}$ on the fixed point set of $S(\QQ)$, which is reduced to $\mA.$ Define $S_2$ as the kernel of this action, then $S_2$ is as required by similar arguments.
\end{proof}

The subgroups $S_1$ and $S_2$ should be thought of as ``large'' as they are Zariski dense in $T_d$ and $T_d'$, respectively, by Proposition \ref{FGSubgroupDense}. Moreover, $S_1$ and $\varphi^{-1}(S_2)$ both normalize each root group $U_\alpha\leq G$, while $\varphi(S_1)$ and $S_2$ both normalize each root group $V_\beta\leq G'$. \\

The next step consists of showing that for certain unipotent elements $u \in U_\alpha \backslash\{1\}$, $\varphi(u) \leq L^J$ for a Levi factor of spherical type containing $Z'(k').$
%
%


\index{pure element}
\begin{definition} Let $U_\alpha\leq G$ be a root group and let $\mathfrak{u}:=\Lie U_\alpha=\lieg_\alpha \oplus \lieg_{2\alpha}.$ For an element $u \in U_\alpha(k)$ let $\log u \in \mathfrak{u}$ denote the unique element such that $\exp (\log u)=u.$\\
Then $u \in U_\alpha$ is called {\bf pure} if $\log u \in \lieg_\alpha$ or $\log u \in \lieg_{2\alpha}.$
\end{definition}
Note that if $U_\alpha$ is abelian, each element $u \in U_\alpha \backslash\{1\}$ is pure.

\begin{lemma} \label{ExistenceOfSL2} 
Let $u \in U_\alpha(k) \backslash \{1\}$ be a pure element. Then there exists a homomorphism 
$\psi_u: \SL_2(\QQ) \to G(k)$ such that 
\begin{enumerate}
\item $\psi_u((\begin{smallmatrix} 1 & 1 \\ 0 & 1 \end{smallmatrix}))=u$
\item $\im \psi_u$ is normalized by $S(\QQ).$ 
\end{enumerate}
\end{lemma}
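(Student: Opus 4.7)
The plan is to construct $\psi_u$ by integrating a homogeneous $\mathfrak{sl}_2$-triple inside $\Lie X_\alpha$, where $X_\alpha=Z\langle U_\alpha,U_{-\alpha}\rangle$ is the reductive $k$-subgroup from Proposition \ref{TorusActsOnRootGroups}, and then to verify normalization by $S(\QQ)$ through a direct character computation on the split torus.

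For the first step I would set $e:=\log u\in\lieu_\alpha$; by purity, $e$ lies entirely in a single $T_d$-weight space $\lieg_\lambda$ with $\lambda\in\{\alpha,2\alpha\}$. Applying Jacobson--Morozov (the main ingredient behind Proposition \ref{existenceofsl2}) to $e$ inside the finite-dimensional reductive Lie algebra $\Lie X_\alpha$ produces an $\mathfrak{sl}_2$-triple $(e,h,f)$; using the $T_d$-weight decomposition of $\Lie X_\alpha$ one arranges that $h\in\Lie T_d$ and $f\in\lieg_{-\lambda}$. Integrating this triple yields a $k$-morphism $\rho\colon\SL_2\to X_\alpha$ with $d\rho$ sending the standard generators of $\mathfrak{sl}_2$ to $(e,h,f)$, so in particular $\rho\bigl(\begin{smallmatrix}1&1\\0&1\end{smallmatrix}\bigr)=\exp(e)=\exp(\log u)=u$. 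Setting $\psi_u:=\rho|_{\SL_2(\QQ)}$ then gives property (a) at once.

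To verify (b), I would track how $s\in S(\QQ)\subseteq T_d(k)$ conjugates $\im\rho$. Since $\Ad(s)$ scales the triple by $(\lambda(s),1,\lambda(s)^{-1})$, a direct computation on the three standard one-parameter subgroups of $\SL_2$ shows $s\,\rho(g)\,s^{-1}=\rho(\phi_{\lambda(s)}(g))$, where $\phi_\mu$ is the automorphism of $\SL_2$ induced by conjugation with $\diag(\mu,1)\in\GL_2$. The decisive point is that $\lambda(s)\in\QQ^\times$ for every $s\in S(\QQ)$: by construction $S(\QQ)$ is generated by the elements $\psi_\beta(\diag(x,x^{-1}))=\beta^\vee(x)$ with $\beta\in\Pi$ and $x\in\QQ^\times$, and for any character $\lambda\in X^*(T_d)$ one has $\lambda(\beta^\vee(x))=x^{\langle\lambda,\beta^\vee\rangle}\in\QQ^\times$ since $\langle\lambda,\beta^\vee\rangle\in\ZZ$. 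Hence $\phi_{\lambda(s)}$ preserves $\SL_2(\QQ)\subseteq\SL_2(k)$, and therefore $s\cdot\im\psi_u\cdot s^{-1}=\rho\bigl(\phi_{\lambda(s)}(\SL_2(\QQ))\bigr)=\im\psi_u$.

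The main obstacle I expect is the first step: arranging the $\mathfrak{sl}_2$-triple to be homogeneous under the $T_d$-grading. In the semisimple case this is Kostant's refinement of Jacobson--Morozov, and the extension to the reductive group $X_\alpha$ is routine. The purity hypothesis is essential here, as it guarantees that $e$ lies in a single weight space $\lieg_\lambda$ so that a complementary nilpotent $f$ can be found in $\lieg_{-\lambda}$; without purity, in the metabelian case a generic element of $U_\alpha$ would mix the weights $\alpha$ and $2\alpha$ and no such clean homogeneous triple would exist. Once the triple is secured, the rest reduces to the character calculation of the preceding paragraph.
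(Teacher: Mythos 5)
Your argument is correct, but it elaborates the route the paper only mentions in passing, while the paper's own ``more precisely'' goes a different way. You build the $\SL_2$ at the Lie algebra level: purity puts $e=\log u$ in a single $T_d$-weight space $\lieg_\lambda$, graded Jacobson--Morozov inside $\Lie X_\alpha$ gives a homogeneous triple, integration (char.\ $0$, $\SL_2$ simply connected) gives $\rho$ with $\rho\bigl(\begin{smallmatrix}1&1\\0&1\end{smallmatrix}\bigr)=u$, and normalization by $S(\QQ)$ comes from the explicit scaling $(\lambda(s),1,\lambda(s)^{-1})$ together with $\lambda(s)=x^{\langle\lambda,\beta^\vee\rangle}\in\QQ^\times$. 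The paper instead stays at the group level: purity makes the line $k\log u$ $\Ad T_d(k)$-stable, hence $Y_u=\exp(k\log u)\cong{\bf G}_a$ is a $T_d$-normalized one-parameter subgroup containing $u$, and Theorem \ref{BorelTitsSplit} (Borel--Tits) produces a \emph{split} reductive subgroup containing $T_d\cdot Y_u$, inside which the desired $\SL_2$ and its behaviour under $S(\QQ)\leq T_d(k)$ are immediate (the paper compresses this last point to ``since $\QQ u$ is invariant under $S(\QQ)$''). What the Borel--Tits route buys is precisely the strong normal form you assert but cannot quite get from Kostant: graded Jacobson--Morozov only places $h$ in the degree-zero part $\lieg_0=\Lie Z_{X_\alpha}(T_d)$, which still contains the Lie algebra of the anisotropic kernel, not necessarily in $\Lie T_d$; inside the split subgroup, by contrast, $h$ really is a coroot of the split torus. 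Fortunately your proof never uses more than $\Ad(s)h=h$ for $s\in T_d(k)$, which already holds for $h\in\lieg_0$, so the overstatement ``$h\in\Lie T_d$'' is harmless and your normalization computation stands; conversely, your explicit pairing argument for $\lambda(s)\in\QQ^\times$ is a useful fleshing-out of the step the paper leaves implicit.
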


\begin{proof}
This follows from the proof of the Proposition \ref{existenceofsl2} or from Theorem \ref{BorelTitsSplit}. More precisely, 
since $u$ is pure, the subalgebra $k \log u$ is invariant under $\Ad T_d(k)$, i.e. there is a subgroup $Y_u \leq U_\alpha$ which contains $u$ and is isomorphic to ${\bf G}_a$. This isomorphism can be chosen to send $u$ to 1. By Theorem \ref{BorelTitsSplit}, $u$ is contained in a split group which contains $T_d \cdot Y_u$. Since $\QQ u$ is is invariant under $S(\QQ)$, the claim follows. \end{proof}

%

\begin{proposition}
Let $u \in U_\alpha \backslash\{1\}$ be a pure element. Then $\varphi(u)$ fixes two opposite points $x,y \in \mA'$, i.e. $\varphi(u) \in L^J$ for a Levi factor of finite type of $G'$ relative to $T_d'.$
\end{proposition}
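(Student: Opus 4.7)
The plan is to apply Lemma \ref{ExistenceOfSL2} to realize $u$ as the image of the standard unipotent generator of $\SL_2(\QQ)$, and then to exploit the divisibility of this generator together with the absence of divisible elements in the Coxeter group $W'$.

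First, Lemma \ref{ExistenceOfSL2} supplies a homomorphism $\psi_u \colon \SL_2(\QQ) \to G(k)$ with $\psi_u\begin{pmatrix} 1 & 1 \\ 0 & 1 \end{pmatrix} = u$ such that $\im \psi_u$ is normalized by $S(\QQ)$. Applying $\varphi$, the subgroup $H := \varphi(\im \psi_u) \leq G'(k')$ is normalized by $\varphi(S(\QQ)) \leq T_d'(k')$. Since $\varphi(S(\QQ))$ is regular, its fixed point set on $\Delta'$ is precisely the standard twin apartment $\mA'$; because $H$ normalizes $\varphi(S(\QQ))$, it follows that $H$ stabilizes $\mA'$. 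Dividing by the pointwise fixator yields a homomorphism
$$\bar\rho \colon H \longrightarrow \Stab_{G'(k')}(\mA')/\Fix_{G'(k')}(\mA') = N'(k')/Z'(k') = W'.$$

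Next, I argue that $\bar\rho(\varphi(u)) = 1$. The element $u_0 := \begin{pmatrix} 1 & 1 \\ 0 & 1 \end{pmatrix} \in \SL_2(\QQ)$ is divisible in the sense that $u_0 = \begin{pmatrix} 1 & 1/n \\ 0 & 1 \end{pmatrix}^n$ for every $n \geq 1$. Consequently $\varphi(u) = (\varphi \circ \psi_u)(u_0)$ admits an $n$-th root in $H$ for every $n$, and so $\bar\rho(\varphi(u))$ is divisible in $W'$. However $W'$, being a finitely generated Coxeter group, embeds into $\GL_r(\RR)$ via its standard linear representation; by Malcev's theorem such a group is residually finite, and since no nontrivial element of a finite group is divisible (for instance, $x^{|G|!}=1$ for all $x$ in a finite group $G$), the same holds for $W'$. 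Therefore $\bar\rho(\varphi(u)) = 1$, and hence $\varphi(u) \in Z'(k') = \Fix_{G'(k')}(\mA')$.

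Since $\varphi(u)$ fixes every point of $\mA'$, in particular it fixes the two opposite $k'$-chambers $F, -F$ of the rational standardisation, and thus lies in the Levi factor $L^\emptyset = Z'(k')$, which is of spherical (hence finite) type. The only delicate point of the proof is the divisibility step: one must rule out divisible elements in $W'$, for which Malcev's theorem on linear groups provides a clean justification; alternatively, one could argue geometrically via Moussong's CAT(0) Davis complex, using that infinite-order elements have translation lengths bounded below in a proper cocompact action, while the orders of torsion elements are bounded by the orders of finite parabolic subgroups.
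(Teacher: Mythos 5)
There is a genuine gap at the step where you pass from Lemma \ref{ExistenceOfSL2} to the claim that $H := \varphi(\im \psi_u)$ stabilizes $\mA'$. The lemma states that $\im \psi_u$ is \emph{normalized by} $S(\QQ)$, i.e.\ $\varphi(S(\QQ))$ normalizes $H$; it does not state that $H$ normalizes $\varphi(S(\QQ))$, and in general it does not (already in $\SL_2$ a unipotent element does not normalize the diagonal torus). Only the reversed normalization would allow you to conclude that $H$ permutes the fixed points of $\varphi(S(\QQ))$ and hence stabilizes $\mA'$, so the quotient homomorphism $\bar\rho\colon H \to W'$ on which the rest of your argument rests is not available. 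Moreover, the conclusion you reach, $\varphi(u)\in Z'(k')$, is strictly stronger than the proposition and false in general: take $G=G'$ and $\varphi=\id$; a nontrivial pure element $u\in U_\alpha$ does not fix $\mA$ pointwise, so $u\notin Z(k)$. It would also contradict the later steps of the paper, which place $\varphi(U_\alpha)$ inside a single root group $V_\beta$, and $V_\beta\cap Z'(k')=1$.

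The missing ingredient is the boundedness of the image of $\SL_2(\QQ)$ under the abstract homomorphism $\varphi\circ\psi_u$. The paper's proof first invokes \cite[Proposition 2.7]{HainkeEmbedding} to obtain that $\varphi\circ\psi_u(\SL_2(\QQ))$ fixes two \emph{opposite} points $x,y$ of $\Delta'$ (this is where the divisibility of the unipotent subgroups of $\SL_2(\QQ)$ genuinely enters, through boundedness of the image), and only then uses the normalization in the correct direction: since $\varphi(S(\QQ))$ normalizes $H$, it stabilizes the convex fixed-point sets of $H$ in both halves of $\Delta'$, so by a fixed-point argument (\cite[Lemma 2.1]{HainkeEmbedding}) the opposite fixed points can be chosen to be fixed also by $\varphi(S(\QQ))$, hence to lie in $\mA'$, because $\Fix(\varphi(S(\QQ)))=\mA'$ by regularity. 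Your observation that a finitely generated Coxeter group has no nontrivial element divisible by all integers (via linearity and Malcev's theorem) is correct as stated, but it is applied to a quotient map that does not exist, and it cannot substitute for the boundedness input.
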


\begin{proof}
Let $\psi_u: \SL_2(\QQ) \to G(k)$ be as in the previous lemma. 
Then $$\varphi \circ \psi_u: \SL_2(\QQ) \to G'(k')$$ is a homomorphism
whose images fixes two opposite points $x,y \in \Delta'$ by \cite[Proposition 2.7]{HainkeEmbedding}. As $\im \psi_u$ is normalized by $S(\QQ)$, both $x$ and $y$ must actually be contained in $\mA'$ by \cite[Lemma 2.1]{HainkeEmbedding}Lemma and the fact that $\varphi(S(\QQ))$ fixes only $\mA'.$ 
\end{proof}
It remains to prove that not only $\varphi(u) \in L^J$ but actually $\varphi(u) \in V_{\beta(u)}$ for some root $\beta(u)$ depending on $u$.\\

The following proposition uses the trick that a unipotent element $u$ is an element of the derived group of a solvable group $B_u$, a property which is clearly preserved by a group isomorphism. This idea goes back to \cite{BorelTits}.

\begin{proposition}\label{uInL}
Let $u \in U_\alpha \backslash\{1\}$ be pure and let $J \subseteq S'$ be such that 
$\varphi(u) \in L^J.$ Then 
$$ cl(u):=\overline{\langle c\varphi(u) c^{-1}: c \in S_2\rangle} \leq \overline{L^J}$$
is a unipotent group defined over $k'$ and normalized by $T_d'$.
\end{proposition}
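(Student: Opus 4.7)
The plan is to establish four properties of $cl(u)$ separately: containment in $\overline{L^J}$, normalization by $T_d'$, definition over $k'$, and unipotence. The first is immediate: since $S_2 \leq T_d'(k')$ and $T_d' \leq L^J$, every generator $c\varphi(u)c^{-1}$ lies in $L^J$, so the Zariski closure of the subgroup they generate lies in $\overline{L^J}$.

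For the normalization by $T_d'$, the generating set $\{c\varphi(u)c^{-1} : c \in S_2\}$ is, by its very definition, stable under conjugation by $S_2$; hence $S_2$ normalizes the group it generates and therefore also its Zariski closure $cl(u)$. The normalizer of $cl(u)$ in $\overline{L^J}$ is Zariski closed, and since $T_d'(k')/S_2$ is finitely generated by construction, Lemma \ref{FGSubgroupDense} gives $\overline{S_2} = T_d'$, so $T_d'$ normalizes $cl(u)$. Definition over $k'$ follows by a standard Galois argument: the generators lie in $G'(k')$, hence the generating set is pointwise fixed by $\Gal(\overline{k'}/k')$, so its Zariski closure is Galois-stable and therefore defined over the perfect field $k'$.

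The main step is unipotence, which reduces to showing that $\varphi(u)$ is unipotent in $\overline{L^J}$. Since $u$ is pure, Lemma \ref{ExistenceOfSL2} supplies a homomorphism $\psi_u \colon \SL_2(\QQ) \to G(k)$ sending the standard upper-triangular unipotent to $u$ and whose image is normalized by $S(\QQ)$. By the preceding proposition, $\varphi \circ \psi_u$ takes values in $L^J \leq \overline{L^J}(k')$. Applying the rationality theorem for abstract homomorphisms from $\SL_2(\QQ)$ into reductive algebraic groups in characteristic $0$ (\cite[Lemma 5.9]{PECAbstract}, already invoked in the proof of Lemma \ref{characterizationregular}), this composition extends to a morphism of $k'$-algebraic groups, which automatically sends unipotent elements to unipotent ones. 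Hence $\varphi(u)$ is unipotent, each $S_2$-conjugate $c\varphi(u)c^{-1}$ is unipotent, and since the unipotent variety of $\overline{L^J}$ is Zariski closed, every element of $cl(u)$ is unipotent. A closed subgroup of a linear algebraic group consisting entirely of unipotent elements is itself a unipotent subgroup (Kolchin), which finishes the proof.

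The principal obstacle is the unipotence of $\varphi(u)$: the abstract isomorphism $\varphi$ has no a priori compatibility with Jordan decomposition, so one must exploit the extra structure coming from the $\SL_2$-subgroup supplied by purity together with the rationality principle of \cite{PECAbstract}. Once that input is in place, the remaining bookkeeping — closure, normalization by $T_d'$ via Zariski density of $S_2$, and descent to $k'$ — is routine.
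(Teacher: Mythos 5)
Your containment, $k'$-definedness, and $T_d'$-normalization steps are fine and match the paper (Zariski density of $S_2$ via Lemma \ref{FGSubgroupDense}), and your route to the unipotence of the single element $\varphi(u)$ --- via the $\SL_2(\QQ)$-homomorphism $\psi_u$ and the rationality of abstract representations of $\SL_2(\QQ)$ in characteristic $0$ --- is a plausible alternative to the paper's argument (which instead uses the Borel--Tits trick: $u$ lies in every finite-index subgroup of the derived group of the solvable group $B_u=S(\QQ)\cdot \QQ u$, and Lie--Kolchin applied to $\overline{\varphi(B_u)}\leq \overline{L^J}$ forces $\varphi(u)$ to be unipotent).

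However, your final step has a genuine gap: from ``each generator $c\varphi(u)c^{-1}$ is unipotent'' you cannot conclude that every element of $cl(u)$ is unipotent. The unipotent variety is Zariski closed, but it is not closed under products, so the subgroup generated by unipotent elements need not consist of unipotent elements (in $\SL_2$, the group generated by the torus conjugates of a unipotent element not lying in a root group relative to that torus already contains non-unipotent elements; indeed $\SL_2$ itself is generated by unipotents). Kolchin's theorem requires that \emph{every} element of the closed subgroup be unipotent, which is exactly what remains to be proved. The paper closes this gap by exploiting a fact you never use: $\varphi^{-1}(S_2)$ normalizes $U_\alpha$, so the group $Y_u:=\langle \varphi^{-1}(c)\,u\,\varphi^{-1}(c)^{-1}: c\in S_2\rangle$ (whose image under $\varphi$ generates $cl(u)$) is a subgroup of the nilpotent group $U_\alpha$; enlarging to $Y_u'$ by $S(\QQ)$-conjugation and applying the same solvable-derived-group/Lie--Kolchin argument to the \emph{group} $\varphi(Y_u')\leq L^J$ (not just to the element $\varphi(u)$) shows that all of $\varphi(Y_u')$ is unipotent, and only then is its Zariski closure $cl(u)$ a unipotent group. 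To repair your proof you would need an analogous argument showing that the whole group generated by the $S_2$-conjugates is unipotent, e.g.\ by pulling it back into $U_\alpha$ as the paper does.
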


\begin{proof}
Let $Y_u:=\langle \varphi^{-1}(c)u\varphi^{-1}(c^{-1}): c \in S_2 \rangle.$ Then $Y_u \leq U_\alpha$ since $\varphi^{-1}(S_2)$ normalizes $U_\alpha.$ Moreover $Y_u$ is contained in $Y_u':=\langle sY_us^{-1}: s \in S(\QQ)\rangle$. \\
By Proposition \ref{uInL} there is a subset $J \subseteq S'$ such that
$\varphi(u) \in L^J.$ Since $\varphi(S(\QQ))$ and $S_2$ are subgroups of $T_d'(k')$ it follows that $\varphi(Y_u')\leq L^J.$\\
Let $\QQ u:=\exp(\QQ \cdot \log u).$ Then $\QQ u$ is a group normalized by $S(\QQ)$ by Proposition \ref{ExistenceOfSL2}. The group $B_u:=S(\QQ)\cdot \QQ u$ is solvable and $u$ is contained in every finite index subgroup of the derived group of $B_u.$ Indeed, since $S(\QQ)$ acts on $\QQ \cdot u$ via a non-trivial character, for each $n \in \NN$ there is some $s \in S(\QQ)$ such that $\frac 1 n \cdot u \in \langle [s,u] \rangle.$ \\
As $B_u \leq Y_u', \varphi(B_u) \leq L^J$. Since $B_u$ is solvable, so is $\overline{\varphi(B_u)}$. By the Lie-Kolchin theorem, $\overline{\varphi(B_u)}$ has a finite index subgroup which is triagonalizable, and since $u$ is in the derived group, it follows that 
$\varphi(u)$ is unipotent. \\
Note that $S(\QQ)$ and $\varphi^{-1}({S_2})$ commute, i.e. $Y_u'$ is normalized by both $S(\QQ)$ and $\varphi^{-1}(S_2)$. Arguing similarly as for $u$, it follows that $\varphi(Y_u')$ is unipotent since it is (up to finite index) contained in the derived group of a solvable group.

%
Since $\varphi(Y_u') \leq G'(k')$, the Zariski closure $cl(u)$ is defined over $k'$, and $cl(u)$ again is unipotent (cf. \cite{Springer}). By definition, $cl(u)$ is normalized by $S_2$ and hence by the Zariski closure of $S_2$, which is $T_d'$ by Lemma \ref{FGSubgroupDense}.
\end{proof}

The following step is inspired by the proof of \cite[Proposition 23]{CapraceRemySimplicityLong}, which in turn is inspired by classical results. \\
We recall first the definition of a nibbling sequence of roots.

\index{nibbling sequence of roots}
\begin{definition} Let $(W,S)$ be a Coxeter group and let $\alpha_1\ldots, \alpha_n \in \Phi(W,S)$ be such that $\{\alpha_i,\alpha_j\}$ is prenilpotent for all $i,j \in \{1, \ldots, n\}.$ Then $(\alpha_1, \ldots, \alpha_n)$ is called a {\bf nibbling sequence} of roots if for all $i<j$, 
$(\alpha_i, \alpha_j) \subseteq \{\alpha_{i+1}, \ldots, \alpha_{j-1}\}.$
\end{definition}

\begin{proposition} \label{existsordering} Let $(W,S)$ be a spherical Coxeter group and let $\Psi \subseteq \Phi(W,S)$ be a nilpotent set of roots. Then the elements of $\Psi$ can be arranged to form a nibbling sequence of roots. 
\end{proposition}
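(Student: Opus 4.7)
The plan is to reduce to the positive-root case and then invoke the classical convexity of the enumeration of $\Phi^+$ coming from a reduced expression of the longest element of $W$. Since $W$ is spherical and $\Psi$ is prenilpotent, there exists $w \in W$ with $w\Psi \subseteq \Phi^+$, and producing a nibbling sequence for $w\Psi$ yields one for $\Psi$ by applying $w^{-1}$; so I may assume $\Psi \subseteq \Phi^+$. The nilpotency hypothesis moreover gives closedness of $\Psi$ under the interval operation: $[\alpha,\beta]\subseteq \Psi$ for every pair $\alpha,\beta \in \Psi$.

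The first key step is to reinterpret the Tits interval geometrically. In the standard representation of $W$, the condition $\gamma \supset \alpha\cap\beta$ says that $\gamma(v)\geq 0$ whenever both $\alpha(v)\geq 0$ and $\beta(v)\geq 0$. By a Farkas-type argument, and using that prenilpotent pairs of distinct roots are linearly independent (the only proportional pair $\{\alpha,-\alpha\}$ fails prenilpotency), this amounts to $\gamma = a\alpha + b\beta$ with $a,b\geq 0$. The symmetric condition $-\gamma \supset -\alpha\cap-\beta$ gives the same inequality, so
\[ [\alpha,\beta] = \{\gamma \in \Phi : \gamma = a\alpha + b\beta,\ a,b\geq 0\}, \qquad (\alpha,\beta) = \{\gamma \in \Phi : \gamma = a\alpha + b\beta,\ a,b>0\}. \]

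Next I fix a reduced expression $w_0 = s_{i_1}\cdots s_{i_N}$ for the longest element and form the classical enumeration $\beta_k := s_{i_1}\cdots s_{i_{k-1}}(\alpha_{i_k})$ of $\Phi^+$. The convexity property of this enumeration, a classical fact for finite Coxeter groups, asserts that whenever $\gamma \in \Phi^+$ satisfies $\gamma = a\beta_i + b\beta_j$ with $a,b>0$ and $i<j$, the root $\gamma$ appears at a position strictly between $i$ and $j$ in the enumeration. Restricting this total order to $\Psi$ then yields an enumeration $\alpha_1,\ldots,\alpha_n$; for $p<q$ and $\delta \in (\alpha_p,\alpha_q)$, closedness places $\delta \in \Psi$, so $\delta = \alpha_r$ for some $r$, and the geometric description combined with convexity forces $p<r<q$. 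This establishes the nibbling property $(\alpha_p,\alpha_q)\subseteq\{\alpha_{p+1},\ldots,\alpha_{q-1}\}$.

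The main obstacle I anticipate is the geometric identification of the Tits interval with the cone-of-roots description: one has to unwind the half-space definition carefully in the standard representation and check that the Farkas-style reformulation applies once the (non-prenilpotent) proportional case $\beta=-\alpha$ is discarded. Once this identification is in hand, the classical convexity of the reduced-expression enumeration does all of the remaining work.
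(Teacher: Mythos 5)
Your argument is correct, but note that the paper does not actually prove this proposition: its ``proof'' is a pointer to R\'emy [Section 9.1.2], so what you have written is a self-contained substitute rather than a parallel of an argument in the text. Your route --- reduce to $\Psi\subseteq\Phi^+$ via prenilpotency and $W$-equivariance of the intervals, identify Tits' combinatorial interval with $\Phi\cap(\RR_{\geq 0}\alpha+\RR_{\geq 0}\beta)$, then restrict to $\Psi$ the total order on $\Phi^+$ coming from a reduced expression of $w_0$ and invoke convexity of that enumeration, with closedness of the nilpotent set supplying $\delta\in\Psi$ --- is sound, and the strictness $p<r<q$ does follow because $a,b>0$ rules out $\delta\in\{\alpha_p,\alpha_q\}$ (distinct positive roots are never proportional in the standard representation). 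Two steps you flag but do not carry out deserve to be written down if this is to replace the citation. First, the interval identification: the containments $\gamma\supseteq\alpha\cap\beta$ are containments of sets of chambers, not of closed half-spaces of $V^*$, so the Farkas step needs the observation that for spherical $W$ the Tits cone is all of $V^*$, the closed chambers tile it, and a point with $f(\alpha)\geq 0$, $f(\beta)\geq 0$, $f(\gamma)<0$ can be perturbed into an open chamber lying in $\alpha\cap\beta$ but not in $\gamma$; this perturbation needs the open cone $\{f: f(\alpha)>0,\ f(\beta)>0\}$ to be nonempty, which is exactly where prenilpotency (excluding $\beta=-\alpha$) enters --- this is the only place sphericity and prenilpotency of pairs are genuinely used. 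Second, the convexity of the reduced-expression enumeration should be traced to the standard lemma that inversion sets $N(u)$ and their complements in $\Phi^+$ are closed under positive linear combinations landing in $\Phi^+$; applying this to the prefixes $u_{i-1}$ and $u_j$ of the reduced word gives $i\leq k\leq j$ and hence, with the non-proportionality remark, $i<k<j$. With these two points filled in, your proof is complete and arguably more explicit than deferring to the reference, at the (mild) cost of invoking the classical theory of convex orders, which is available precisely because $W$ is assumed spherical.
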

\begin{proof}
See \cite[Section 9.1.2]{Remy}.
\end{proof}

\begin{theorem}\label{UnipotentInSingle}
Let $u \in U_\alpha \backslash\{1\}$ be pure and $J\subseteq S'$ spherical such that $\varphi(u)  \in L^J$. Then $\varphi(u) \in V_\beta$ for some $\beta \in \Phi'.$
\end{theorem}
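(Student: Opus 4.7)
The goal is to sharpen Proposition \ref{uInL}: although that result places $\varphi(u)$ in a unipotent $k'$-subgroup $cl(u)$ of $\overline{L^J}$ normalized by $T_d'$, I need the stronger conclusion that $\varphi(u)$ lies in a single $k'$-root group $V_\beta$. My approach is to expand $\varphi(u)$ in a nibbling decomposition and then use a torus-centralizer codimension count to show that all but one factor must be trivial.

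\textit{Nibbling decomposition and torus weights.} Since $\overline{L^J}$ is a connected reductive $k'$-group in which $T_d'$ is a maximal $k'$-split torus and $cl(u)$ is a connected unipotent $T_d'$-stable $k'$-subgroup, Proposition \ref{positiveordering} places $cl(u) \leq U^+ := \langle V_\gamma : \gamma > 0\rangle$ for some ordering of the $k'$-roots of $\overline{L^J}$. Sphericity of $L^J$ makes $\Phi^+$ a nilpotent root set, and Proposition \ref{existsordering} enumerates it as a nibbling sequence $\beta_1, \ldots, \beta_n$ in which every element of $U^+$ factorizes uniquely as a product $v_1 v_2 \cdots v_n$ with $v_i \in V_{\beta_i}$. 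Writing $\varphi(u) = v_1 v_2 \cdots v_n$ and setting $I := \{i : v_i \neq 1\}$, I will show $|I| = 1$. For $t \in T_d'(k')$, uniqueness of the factorization gives $t\varphi(u)t^{-1} = \chi_1(t)v_1 \cdots \chi_n(t)v_n$, where $\chi_i \in \{\beta_i, 2\beta_i\}$ is the $T_d'$-character acting on the line through $v_i$ (Proposition \ref{TorusActsOnRootGroups}(iii)); hence $C_{T_d'}(\varphi(u)) = \bigcap_{i \in I}\ker \chi_i$, which has codimension exactly $|I|$ in $T_d'$ because distinct indivisible roots in a spherical Levi produce linearly independent characters (as used in Proposition \ref{TorusActsOnRootGroups}(vi)).

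\textit{Transferring codimensions and concluding.} On the $G(k)$ side, purity of $u$ means $T_d(k)$ acts on $u$ via the single character $\chi_\alpha \in \{\alpha, 2\alpha\}$, so $C_{T_d(k)}(u)$ has codimension exactly $1$ in $T_d$. For $s \in S_1 \cap \ker \chi_\alpha$, $\varphi(s) \in Z'(k')$ centralizes $\varphi(u)$ since $sus^{-1} = u$. Because $T_d'$ is central in $Z'$, the image $\varphi(S_1)$ commutes with $T_d'(k')$, so inside the connected abelian algebraic group $A := \overline{\varphi(S_1)\cdot T_d'(k')}$ the centralizer $C_A(\varphi(u))$ absorbs both the codimension-$1$ contribution from $\overline{\varphi(S_1 \cap \ker \chi_\alpha)}$ (via Zariski density, Lemma \ref{FGSubgroupDense}) and the codimension-$|I|$ contribution from $C_{T_d'}(\varphi(u))$. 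Running the symmetric argument with $\varphi^{-1}$ in place of $\varphi$ yields a matching codimension-$1$ bound on $C_{T_d'(k')}(\varphi(u))$, forcing $|I| \leq 1$; since $u \neq 1$, we conclude $|I| = 1$ and thus $\varphi(u) \in V_{\beta_{i_0}}$ for the unique $i_0 \in I$.

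\textit{Main obstacle.} The delicate step is the dimension accounting in the previous paragraph: $\varphi$ sends $T_d(k)$ only into the anisotropic kernel $Z'(k')$, not into $T_d'(k')$, so the codimension-$1$ data on the two sides of $\varphi$ a priori live in different subgroups of $G'(k')$. Making the codimension comparison rigorous will require simultaneously exploiting the regularity of the image of $S(\QQ)$ (which, via Lemma \ref{characterizationregular}, forces $\varphi(Z(k)) = Z'(k')$), the centrality of $T_d'$ in $Z'$, and the Zariski density of $S_1$ and $S_2$ in their respective tori, so that the two codimension bounds can be amalgamated inside the common abelian group $A$.
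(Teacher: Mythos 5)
Your first step --- writing $\varphi(u)$ as a product $v_1\cdots v_n$ along a nibbling sequence of the positive roots of the spherical Levi, via Propositions \ref{uInL}, \ref{positiveordering} and \ref{existsordering} --- is exactly how the paper begins. But the second half of your plan has a genuine gap, and it sits precisely at the point you yourself flag as the ``main obstacle''. The symmetric codimension bound you need on the $G'$-side would say that $\{s\in S_2:\varphi^{-1}(s)\text{ centralizes }u\}$ has codimension at most $1$ in $T_d'$; however, $\varphi^{-1}(S_2)$ lies only in $Z(k)$, not in $T_d(k)$, and the anisotropic kernel acts on the vector group $U_\alpha$ through a genuine linear representation, not through a single character. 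The rigidity statement you are implicitly relying on (centralizing one nontrivial element forces the square to centralize the whole root group, Proposition \ref{TorusActsOnRootGroups}(v)) is proved for $T_d$, not for $Z$, so $C_{Z(k)}(u)\cap\varphi^{-1}(S_2)$ can a priori have large codimension and no contradiction with $|I|\geq 2$ follows. Your proposed repair is also not available: Lemma \ref{characterizationregular} does not give $\varphi(Z(k))=Z'(k')$ at this stage (centralizing $\varphi(S(\QQ))$ only places $\varphi(Z(k))$ in the stabilizer of $\mA'$, not in its fixator; the equality $\varphi(Z(k))=Z'(k')$ is a consequence of the final theorem, so invoking it here is circular), and enlarging to the abelian group $A=\overline{\varphi(S_1)\cdot T_d'(k')}$ does not help, since a large centralizer in $A$ coming from the $\varphi(S_1)$-direction is perfectly compatible with a small centralizer in $T_d'$. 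A smaller inaccuracy: distinct positive roots in a spherical system need not give linearly independent characters (e.g.\ $\alpha,\beta,\alpha+\beta$ in type $A_2$), so the codimension of $\bigcap_{i\in I}\ker\chi_i$ is not $|I|$ in general; you only need ``$\geq 2$ when $|I|\geq 2$'', which does hold since distinct roots of $\Phi'$ are non-proportional, but the count as stated is wrong.

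The paper closes the argument by a different mechanism that supplies exactly the bridge you are missing. First, a commutator trick: choosing $t_{ij}\in S_2$ (whose $\varphi$-preimages normalize $U_\alpha$) that centralize $V_{\beta_i}$ but not $V_{\beta_j}$, one takes commutators $[t_{1,r},\varphi(u)]$ and $[t_{r,1},\varphi(u)]$, which again lie in $\varphi(U_\alpha)$ and whose supports drop one end of the nibbling sequence; iterating produces two elements $u_i,u_j\in U_\alpha\setminus\{1\}$ with $\varphi(u_i)\in V_{\beta_i}$ and $\varphi(u_j)\in V_{\beta_j}$, $\beta_i\neq\beta_j$. Second, and crucially, one constructs an element $s\in S_2$ of infinite order with the extra property $\varphi^{-1}(s)\in T_d(k)$ (via the $\QQ$-points of a split subgroup of $G'(k')$), centralizing $V_{\beta_i}$ but not $V_{\beta_j}$. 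Then Proposition \ref{TorusActsOnRootGroups}(v), applied on the $G$-side to $\varphi^{-1}(s)\in T_d(k)$ and $u_i$, shows $\varphi^{-1}(s^2)$ centralizes all of $U_\alpha$, contradicting that $s^2$ does not centralize $\varphi(u_j)\in V_{\beta_j}$. In other words, instead of trying to transfer a dimension count through $\varphi$, the paper manufactures specific torus elements on the $G'$-side whose preimages land in $T_d(k)$, which is what makes the two root data comparable; your plan would need an analogous ingredient to be completed.
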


\begin{proof}
By Proposition \ref{uInL} and Proposition \ref{positiveordering}, it follows that 
$$\psi(u) \in V_J^+:=\langle V_\beta: \beta \in \Phi(W'_J), \beta>0 \rangle$$ for a suitable ordering '$>$' of the roots of $\Phi(W'_J)$. 
Since $\Phi(W'_J)$ is finite it follows from Proposition \ref{existsordering} that there is an ordering on the positive roots $\beta_i$ such that $(\beta_1, \ldots, \beta_k)$ is a nibbling sequence.
Then 
$$\varphi(u)=v_{i_1}\cdots v_{i_r}$$
for certain $v_{i_j} \in V_{\beta_{i_j}}, v_{i_j} \neq 1$. \\
Assume for a contradiction that $r>1.$ \\
{\it Claim. In this case, there are indices $i \neq j$ and elements $u_i, u_j \in U_\alpha \backslash\{1\}$ such that 
$\varphi(u_i) \in V_{\beta_i}$ and $\varphi(u_j) \in V_{\beta_j}.$}

Since $W_J'$ is spherical, for any two roots $\beta_i, \beta_j \in \Phi(W_J')$ such that $\beta_i \neq \pm \beta_j$, there is an element $t_{ij} \in \langle V_\alpha: \alpha \in \Phi(W_J')\rangle \cap S_2$ such $t_{ij}$ centralizes $V_{\beta_i}$ but not $V_{\beta_j}.$
This follows from the fact that such an element exists in $T_d'(k')$ and the fact that $S_2$ is Zariski dense in $T_d'.$

Consider 
$v_1:=[t_{1,r},\varphi(u)]$ and $v_2:=[t_{r,1},\varphi(u)]$

Then $v_1,v_2 \in \varphi(U_\alpha)$ since $\varphi^{-1}(t_{ij})$ normalizes $U_\alpha.$ Furthermore, the support of $v_1$ contains $\beta_1$ but not $\beta_r$. Likewise, the support of $v_2$ does not contain $\beta_1$ but contains $\beta_r.$

By repeating the process for $v_1$ and $v_2$ inductively if necessary, the claim is proven.\\

Now take an element $s \in S_2$ of infinite order such that $s$ centralizes $V_{\beta_i}$
but not $V_{\beta_j}$ and such that $\varphi^{-1}(s) \in T_d(k).$ The existence of such an element can be proven by appealing to the $\QQ$-points of a split subgroup of $G'(k')$ and the fact that $\beta_i,\beta_j$ are roots in a {\it spherical} Coxeter group. 
Then $\varphi^{-1}(s)$ centralizes $u_i$, since $\varphi(u_i) \in V_{\beta_i}$, so $\varphi^{-1}(s^2)$ centralizes $U_\alpha$ since $\varphi^{-1}(s) \in T_d(k).$ But this is a contradiction, since $\varphi^{-1}(s^2)$ does not centralize $u_j$, since $\varphi(u_j) \in V_{\beta_j}$. 
\end{proof}

%

\begin{corollary} Let $U_\alpha$ be a root group. Then $\varphi(\mathscr Z(U_\alpha))\leq V_\beta$ for some $\beta \in \Phi'.$
\end{corollary}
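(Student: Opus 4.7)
The plan is to combine the main theorem just established (Theorem \ref{UnipotentInSingle}), which handles a single pure element, with Proposition \ref{group_in_rootgroups}, which bundles many elements belonging to root groups into a single root group.

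First I would verify that every non-trivial element of $\mathscr Z(U_\alpha)$ is pure in the sense of the preceding definition. If $U_\alpha$ is abelian then $\mathscr Z(U_\alpha)=U_\alpha$ and all non-trivial elements are pure by the remark immediately after the definition. If $U_\alpha$ is metabelian, then by Proposition \ref{TorusActsOnRootGroups}(iii)(b), $\mathscr Z(U_\alpha)$ is the subgroup corresponding to $\lieg_{2\alpha}$, so for $u \in \mathscr Z(U_\alpha)\setminus\{1\}$ we have $\log u \in \lieg_{2\alpha}$ and hence $u$ is pure.

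Next, for each such pure $u$, the proposition preceding Theorem \ref{UnipotentInSingle} gives a spherical subset $J\subseteq S'$ with $\varphi(u) \in L^J$, so Theorem \ref{UnipotentInSingle} applies and yields a root $\beta_u \in \Phi'$ with $\varphi(u) \in V_{\beta_u}$. Thus every non-trivial element of the subgroup $\varphi(\mathscr Z(U_\alpha)) \leq G'(k')$ belongs to some root group of $G'$.

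Finally, I would apply Proposition \ref{group_in_rootgroups} to $L := \varphi(\mathscr Z(U_\alpha))$ inside $G'(k')$, with the twin root datum $(Z'(k'),(V_\beta)_{\beta \in \Phi'})$: the hypothesis of that proposition is exactly what the previous step supplies, and the conclusion is that there is a single $\beta \in \Phi'$ with $\varphi(\mathscr Z(U_\alpha)) \leq V_\beta$. The only subtle point I see is confirming that elements of $\mathscr Z(U_\alpha)$ are indeed pure in the metabelian case; everything else is a direct assembly of the previously proven results, so I do not expect a significant obstacle.
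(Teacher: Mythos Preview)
Your proposal is correct and follows essentially the same approach as the paper: the paper's proof also notes that every nontrivial element of $\mathscr Z(U_\alpha)$ is pure, invokes Theorem \ref{UnipotentInSingle} to place each $\varphi(u)$ in some $V_{\beta_u}$, and then applies Proposition \ref{group_in_rootgroups} to obtain a single $\beta$. Your additional justification of purity in the metabelian case is a welcome elaboration of a step the paper leaves implicit.
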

\begin{proof}
By the preceding theorem, $\varphi(\mathscr Z(U_\alpha))$ is a group which satisfies the assumptions of Lemma \ref{group_in_rootgroups}, since each element $u \in \mathscr Z(U_\alpha) \backslash\{1\}$ is pure, so the conclusion follows.
\end{proof}

This corollary finishes the case where all root groups are abelian. Some more effort is required when there are metabelian root groups present. These technical problems are always present when one deals with metabelian root groups, see e.g. \cite{Deodhar} or \cite{BorelTits}.\\ 

The following lemma is inspired by the proof of \cite[Theorem 2.2]{MR2180452}.
\begin{lemma}
Let $u \in U_\alpha \backslash\{1\}$ be a pure element and let $\beta \in \Phi'$ be such that $\varphi(u) \in V_\beta$.
Then the elements $u',u'' \in U_{-\alpha}$ such that $m(u)=u'uu''$ satisfy $\varphi(u'), \varphi(u'') \in V_{-\beta}.$
\end{lemma}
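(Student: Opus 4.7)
The approach is to exhibit a common rank-one split subgroup $X \leq G(k)$ that simultaneously contains $u$, $u'$ and $u''$, and then to identify the image $\varphi(X)$ inside $G'(k')$ with the canonical rank-one split subgroup attached to the root $\beta$. Once this identification is in hand, the conclusion is immediate, since the lower unipotent part of that rank-one subgroup is contained in $V_{-\beta}$.

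Step~1. Since $u$ is pure, Lemma~\ref{ExistenceOfSL2} supplies a homomorphism $\psi_u \colon \SL_2(\QQ) \to G(k)$ normalised by $S(\QQ)$ whose image $X$ intersects $U_\alpha$ (respectively $U_{-\alpha}$) in the upper (respectively lower) unipotent subgroup. The classical $\SL_2$-identity expressing the non-trivial Weyl representative as a product lower--upper--lower of unipotent matrices allows the specific elements $u', u''$ appearing in $m(u) = u'uu''$ to be taken inside $X$; in particular $u'$ and $u''$ are themselves pure elements of $U_{-\alpha}$.

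Step~2. The subgroup $X$ is bounded in $G(k)$, since it fixes the two opposite points of $\mA$ cut out by the diagonal torus of $\SL_2$; therefore $\varphi(X)$ is bounded in $G'(k')$. As $S(\QQ)$ normalises $X$, the image $\varphi(S(\QQ)) \leq T_d'(k')$ normalises $\varphi(X)$, so the opposite points fixed by $\varphi(X)$ can be chosen inside $\mA'$. Consequently $\varphi(X)$ lies in a spherical Levi factor $L'^{J'}$ of $G'(k')$, which is the group of $k'$-rational points of a connected reductive $k'$-group to which the structural statements of Proposition~\ref{TorusActsOnRootGroups} apply.

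Step~3. Inside $L'^{J'}$ the element $\varphi(u) \in V_\beta$ is a non-trivial unipotent element, and by Proposition~\ref{existenceofsl2} it lies in the image of an algebraic $\SL_2$ defined over $k'$. Reading off its $T_d'$-weight via Proposition~\ref{TorusActsOnRootGroups}(iii) identifies this $\SL_2$ with the canonical rank-one split subgroup $H_\beta$ of $L'^{J'}$ attached to $\beta$: the upper unipotent of the $\SL_2$ must be the one-dimensional $T_d'$-stable $k'$-subgroup of $V_\beta$ (or of $\mathscr Z(V_\beta)$, if $V_\beta$ is metabelian and $\varphi(u)$ lies in the centre) through $\varphi(u)$, and the only rank-one split subgroup of $L'^{J'}$ exhibiting such a unipotent layer is $H_\beta$. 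The abstract rank-one subgroup $\varphi(X)$, containing $\varphi(u)$ and normalised by $\varphi(S(\QQ))$, is then forced into $H_\beta(k')$, whose lower unipotent lies inside $V_{-\beta}$. Hence $\varphi(u'), \varphi(u'') \in V_{-\beta}$. The main obstacle is precisely this identification: when $V_\beta$ is metabelian one has two candidate rank-one subgroups of $L'^{J'}$ passing through $V_\beta$ (corresponding to $\beta$ and $2\beta$), and the correct one can be singled out only by locating $\varphi(u)$ inside the filtration $\mathscr Z(V_\beta) \leq V_\beta$ and matching characters via Proposition~\ref{TorusActsOnRootGroups}.
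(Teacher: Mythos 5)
Your Steps 1 and 2 are fine and agree with what the paper has already established (uniqueness of $u',u''$ puts them inside $\im\psi_u$, they are pure, and the image of $\SL_2(\QQ)$ under $\varphi\circ\psi_u$ fixes two opposite points of $\mA'$, hence lands in a spherical Levi factor). The gap is in Step 3, which is where the whole content of the lemma lies. You assert that $\varphi(X)$ ``is forced into $H_\beta(k')$'' because ``the only rank-one split subgroup of $L'^{J'}$ exhibiting such a unipotent layer is $H_\beta$.'' That uniqueness claim is false as stated: conjugating $H_\beta$ by a nontrivial element of $V_\beta$ produces another rank-one split subgroup containing exactly the same one-parameter unipotent subgroup through $\varphi(u)$, but whose opposite unipotent is \emph{not} contained in $V_{-\beta}$. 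Pinning down the subgroup in Borel--Tits style requires normalization by (a Zariski-dense subgroup of) the maximal split torus, and you have not verified this for $\varphi(X)$: it is only an abstract image of $\SL_2(\QQ)$, not a priori contained in, or with Zariski closure equal to, any algebraic rank-one subgroup, and $\varphi(S(\QQ))$ is only known to be regular diagonalizable, not Zariski dense in $T_d'$. So the possibility that the ``lower'' unipotent part of $\varphi(X)$, i.e.\ $\varphi(u')$, sits in some $V_\gamma$ with $\gamma\neq-\beta$ is never excluded; excluding it is precisely what has to be proved. (A smaller issue: boundedness of $\varphi(X)$ does not follow from boundedness of $X$, since boundedness is not preserved by abstract isomorphisms; one needs the divisibility/bounded-generation argument from \cite{HainkeEmbedding}, which the paper invokes.)

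The paper closes this gap differently: since $u'=u''$ is itself pure, Theorem \ref{UnipotentInSingle} applies to it directly and gives a single root $\gamma\in\Phi'$ with $\varphi(u')\in V_\gamma$; then one shows $\gamma=-\beta$ by a dichotomy on the pair $\{\beta,\gamma\}$. If $\{\beta,\gamma\}$ is prenilpotent, then $\langle V_\beta,V_\gamma\rangle$ is nilpotent, which is impossible because it contains the nontrivial image of $\SL_2(\QQ)$ generated by $\varphi(\QQ u)$ and $\varphi(\QQ u')$. If $\{\beta,\gamma\}$ is not prenilpotent, then $V_\beta$ and $V_\gamma$ generate their free product inside $G'(k')$, contradicting the fact that a suitable conjugate of $u$ inside $\SL_2(\QQ)$ commutes with $u'$ while no such commutation can hold between nontrivial elements on the two free factors. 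If you want to salvage your route, you would essentially have to reprove this dichotomy argument (or establish rationality of $\varphi|_X$ and density of the relevant torus) to justify the identification with $H_\beta$; as written, the identification is assumed rather than proved.
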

\begin{proof}
From Lemma \ref{ExistenceOfSL2} it follows that $u'=u''$ and that $u'$ is pure. Let $\gamma \in \Phi'$ be such that $\varphi(u') \in V_\gamma$. It is clear that $\varphi(\QQ u) \leq V_\beta$ and that $\varphi(\QQ u')\leq V_\gamma$. This induces a homomorphism $\psi: \SL_2(\QQ) \to V_{\beta\gamma}:=\langle V_\beta, V_\gamma \rangle.$ Suppose that $\beta \neq -\gamma.$ If $\{\beta,\gamma\}$ is a prenilpotent set of roots, $V_{\beta\gamma}$ is nilpotent since each root group $V_\alpha$ is nilpotent, which is a contradiction since $\psi$ is nontrivial. If $\{\beta,\gamma\}$ is not prenilpotent, the free product $V_\beta * V_\gamma$ embeds in $G'(k')$, which is a contradiction since a conjugate of $u$ in $\SL_2(\QQ)$ commutes with $u'$, while this is not the case for $\varphi(u) \in V_\beta$ and $\varphi(u')\in V_\gamma.$ 
\end{proof}

%

\begin{proposition} Suppose that $U_\alpha$ is metabelian. Then $\varphi(U_\alpha)\leq V_\beta$ for some $\beta \in \Phi'$.
\end{proposition}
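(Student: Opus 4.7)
Following Step 5 of the outlined strategy, I would reduce the metabelian case to a finite collection of applications of the pure-element results. Since $\chark k = 0$, the exponential $\exp\colon \mathfrak{u} \to U_\alpha$ with $\mathfrak u = \lieg_\alpha \oplus \lieg_{2\alpha}$ is bijective; and because $[\lieg_\alpha, \lieg_{2\alpha}] \subseteq \lieg_{3\alpha} = 0$, the subspace $\lieg_{2\alpha}$ is central in $\mathfrak u$, so $\mathscr Z(U_\alpha) = \exp(\lieg_{2\alpha})$ and $\exp(X+Y) = \exp(X)\exp(Y)$ for $X \in \lieg_\alpha$, $Y \in \lieg_{2\alpha}$. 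Choose a $k$-basis $X_1, \dots, X_r$ of $\lieg_\alpha$ and set $Y_i := \exp(kX_i)$. Each $Y_i$ is a one-parameter subgroup of pure elements at level $\alpha$, normalized by $T_d(k)$, and together with $\mathscr Z(U_\alpha)$ these subgroups generate $U_\alpha$.

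By Theorem \ref{UnipotentInSingle} applied to each non-trivial element of $Y_i$ and Proposition \ref{group_in_rootgroups}, there is $\beta_i \in \Phi'$ with $\varphi(Y_i) \leq V_{\beta_i}$. The preceding Corollary gives $\beta_0 \in \Phi'$ with $\varphi(\mathscr Z(U_\alpha)) \leq V_{\beta_0}$. It suffices to prove $\beta_0 = \beta_1 = \dots = \beta_r$, for then $\varphi(U_\alpha) \leq V_{\beta_0}$.

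Fix $i$ and choose $u \in Y_i \setminus \{1\}$. Writing the Tits $\mu$-element $m(u) = u'uu''$ with $u',u'' \in U_{-\alpha}$, Lemma \ref{ExistenceOfSL2} forces $u',u''$ to be pure at level $-\alpha$, and the preceding lemma on $\mu$-maps yields $\varphi(u'), \varphi(u'') \in V_{-\beta_i}$. Conjugation by $m(u)$ in $G(k)$ sends $\mathscr Z(U_\alpha)$ onto $\mathscr Z(U_{-\alpha})$; applying the Corollary to $-\alpha$ and the same $\mu$-map lemma to pure elements of $\mathscr Z(U_\alpha)$ gives $\varphi(\mathscr Z(U_{-\alpha})) \leq V_{-\beta_0}$. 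Thus $\varphi(m(u)) \in V_{-\beta_i}V_{\beta_i}V_{-\beta_i}$ conjugates a non-trivial element of $V_{\beta_0}$ into $V_{-\beta_0}$. Identifying $\varphi(m(u))$ with the non-trivial Weyl-group element of the rank-one reductive subgroup $\langle V_{\beta_i}, V_{-\beta_i}\rangle$, this forces $s_{\beta_i}(\beta_0) = -\beta_0$, whence $\beta_0$ is proportional to $\beta_i$. The case $\beta_0 = -\beta_i$ is excluded by the nilpotence of $\varphi(U_\alpha)$, since $\langle V_{\beta_0}, V_{-\beta_0}\rangle$ contains a non-nilpotent rank-one subgroup; and the non-reduced alternatives $\beta_0 = \pm 2\beta_i$ or $\pm \tfrac12\beta_i$ still force both $V_{\beta_i}$ and $V_{\beta_0}$ into a single root group via the inclusion $V_{2\gamma} \subseteq V_\gamma$ in $BC$-type, again yielding the claim.

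The main obstacle is the rigidity assertion that $\varphi(m(u))$ acts on $\Phi'$ as the reflection $s_{\beta_i}$: this requires $\varphi(m(u))$ to genuinely normalize $T'_d(k')$ rather than merely the possibly distinct image $\varphi(T_d(k))$. One way to handle this is to note that the $\SL_2(\QQ)$-homomorphism provided by Lemma \ref{ExistenceOfSL2} at $u$ pushes forward through $\varphi$ to a homomorphism whose image is a split rank-one reductive subgroup of $G'(k')$, and then to use axiom (TRD 3) for $G'(k')$ together with the Zariski density of $S_2$ in $T'_d$ to conclude that $\varphi(m(u))$ coincides, modulo $T'_d(k')$, with the standard $\mu$-element in $\langle V_{\beta_i}, V_{-\beta_i}\rangle$.
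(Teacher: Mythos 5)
Your overall reduction (generate $U_\alpha$ by the one-parameter groups $Y_i=\exp(kX_i)$ and the centre, push each into a single root group via Theorem \ref{UnipotentInSingle} and Proposition \ref{group_in_rootgroups}, then show all the target roots coincide) is reasonable, but the pivot of your argument is exactly the step you flag yourself, and the fix you sketch does not work. All you know about $g:=\varphi(m(u))$ is that it is a product $\varphi(u')\varphi(u)\varphi(u'')$ with $\varphi(u)\in V_{\beta_i}$ and $\varphi(u'),\varphi(u'')\in V_{-\beta_i}$. A general element of $V_{-\beta_i}V_{\beta_i}V_{-\beta_i}$ is just some element of the big cell of the rank-one group: it need not normalize $T_d'(k')$, need not stabilize $\mA'$, and need not permute the root groups at all. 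Axiom (TRD 3) only asserts the \emph{existence} of elements $v',v''\in V_{-\beta_i}$ for which $v'\varphi(u)v''$ is a $\mu$-element; it gives no reason that the specific elements $\varphi(u'),\varphi(u'')$ are these, and the Zariski density of $S_2$ does not bridge this. Likewise, the image of $\varphi\circ\psi_u$ is only an abstract image of $\SL_2(\QQ)$; calling it a ``split rank-one reductive subgroup of $G'(k')$'' presupposes a rationality statement of Borel--Tits type that is only available \emph{after} the main theorem. What $g$ is known to normalize is $\varphi(T_d(k))$, not $T_d'(k')$, and at this stage nothing identifies these. So the assertion $s_{\beta_i}(\beta_0)=-\beta_0$, on which everything after it rests, is unproved. (Two smaller points: the alternatives $\pm 2\beta_i,\pm\tfrac12\beta_i$ do not occur in $\Phi'=\Phi(W',S')$ at all -- the non-reduced phenomena are internal to the groups $V_\gamma$ via $\mathscr Z(V_\gamma)$, so that paragraph conflates the Coxeter roots with a relative root system; and your exclusion of $\beta_0=-\beta_i$ by ``nilpotence'' does work in characteristic $0$, via Zariski closures in the reductive rank-one group.)

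The gap is repairable, but by a different mechanism than the one you propose: one can check that $m(u)$ normalizes $S(\QQ)$ (any lift of $s_\alpha$ in $N(T_d)$ induces the same automorphism of $T_d$, and $S(\QQ)$ is Weyl-invariant by construction), so $\varphi(m(u))$ normalizes the regular diagonalizable group $\varphi(S(\QQ))$ and hence stabilizes its fixed-point set $\mA'$; only then does it induce an element of $W'$ fixing the points of $\partial\beta_i$, i.e.\ acting as $1$ or $s_{\beta_i}$, and your computation can proceed. Note that the paper itself avoids this issue entirely: it compares two distinct target roots $\gamma_i\neq\gamma_j$ directly, splitting into the case where $s_{\gamma_i}s_{\gamma_j}$ has finite order (handled inside a spherical Levi factor by separating $V_{\gamma_i}$ from $V_{\gamma_j}$ with a torus element of $S_2$, as in Theorem \ref{UnipotentInSingle}) and the case of infinite order, where it chooses elements $m_i,m_j\in G'(k')$ stabilizing $\mA'$ whose $\varphi$-preimages stabilize $\mA$, so that $\varphi^{-1}(m_im_j)$ normalizes $U_\alpha$ while conjugation by $m_im_j$ moves $V_{\gamma_i}$ through the infinite orbit $V_{w^r\gamma_i}$, contradicting the boundedness of $\varphi(U_\alpha)\subseteq V_{\gamma_1}\cdots V_{\gamma_r}V_{\gamma_0}$. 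As submitted, your proof does not close its central step.
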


\begin{proof} Let $u_1, \ldots, u_r \in U_\alpha$ be pure such that $\log u_1, \ldots, \log u_r$ form a basis for $\lieg_\alpha.$
Let $U_i:=k\cdot u_i$ and let $U_0:=\mathscr Z(U_\alpha).$
 Let $\gamma_0, \ldots, \gamma_r \in \Phi'$ be such that $\varphi(U_i) \leq V_{\gamma_i}.$ These clearly exist, as each $U_i$ is a subgroup of $U_\alpha$ consisting of pure elements. \\
Suppose that there are $i,j$ such that $\gamma_i\neq\gamma_j$.
If $w:=s_{\gamma_i}s_{\gamma_j}$ has finite order, $\gamma_i$ and $\gamma_j$ are roots in a Levi factor $L^J.$ Then $U_{ij}:=\langle U_i,U_j\rangle$ is mapped to a unipotent subgroup of $L^J$ by arguments similar to those in the proof of Proposition \ref{uInL}.
Arguing as in the proof of Theorem \ref{UnipotentInSingle}, this yields a contradiction as then there would exist a torus element $t \in T_d'(k')$ such that $\varphi^{-1}(t)$ centralizes $U_i$ but not $U_j.$\\
It follows that $w$ has infinite order.
 Note that $\varphi(U_\alpha)$ is contained in the set 
$V':=V_{\gamma_1}\cdots V_{\gamma_r} \cdot V_{\gamma_0}$, in particular, 
$\varphi(U_\alpha)$ is bounded.\\

Let $m_i,m_j \in G'(k')$ be such that $m_i,m_j$ stabilize $\mA'$, 
act on it via $s_{\gamma_i},s_{\gamma_j}$ and such that $\varphi^{-1}(m_i),\varphi^{-1}(m_j)$ stabilize $\mA.$ These elements can be shown to exist via e.g. invoking a split subgroup of $G'(k')$. \\
From the previous proposition it follows that $\varphi^{-1}(m_i),\varphi^{-1}(m_j)$ map $U_\alpha$ to $U_{-\alpha}$.\\
For $t:=m_i m_j$ it follows that $\varphi^{-1}(t)$ normalizes $U_\alpha$. 

Then for each $r \in \ZZ$ there exists some $u_r \in U_\alpha$ such that $\varphi (u_r) \in V_{w^r \gamma_i}$. This is the desired contradiction, as this implies that $\varphi(U_\alpha)$ is unbounded.  \end{proof}

%
%
%
%
%
%

To sum up: For each $\alpha \in \Phi$ there is a root $i(\alpha) \in \Phi'$ such that $\varphi(U_\alpha)\leq V_{i(\alpha)}.$ \\
Arguing likewise for $\varphi^{-1}$ (note that the corresponding twin apartments $\mA,\mA'$ are already aligned in the right fashion) we find that for each $\beta \in \Phi'$ there is a $j(\beta) \in \Phi$ such that $\varphi^{-1}(V_\beta) \leq U_{j(\beta)}.$
From the inclusion
$$U_\alpha=\varphi^{-1}(\varphi(U_\alpha))\leq \varphi^{-1}(V_{i(\alpha)})
\leq U_{j(i(\alpha))}$$
and the fact that $U_\alpha\neq 1, U_\alpha \cap U_\beta=1$ for $\alpha \neq \beta$, it finally follows that $i$ and $j$ are inverse bijections and that equality holds all along.\index{standard isomorphism}\\

This discussion can be succinctly summed up by saying that any isomorphism $\varphi\colon G(k) \to G'(k')$ is {\bf standard}, cf. Definition \ref{standardisom}.
We have shown:
\begin{theorem} \label{maintheorem}
Let $G=G(k),G'=G'(k')$ be two 2-spherical almost split Kac--Moody groups over fields $k,k'$ of characteristic 0. Let $(Z(k),(U_\alpha)_{\alpha \in \Phi(W,S)})$ and $ (Z'(k'),(V_\beta)_{\beta \in \Phi(W',S')})$ denote the associated canonical twin root data. Suppose that $\varphi: G(k) \to G'(k')$ is an abstract isomorphism. Then $\varphi$ is standard.
\end{theorem}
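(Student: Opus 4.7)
The plan is to produce a regular diagonalizable subgroup of $T_d(k)$ whose image under $\varphi$ is again regular diagonalizable, use it to align the two standard twin apartments, and then to show, root group by root group, that $\varphi$ preserves the twin root datum. First I would exploit the 2-spherical hypothesis together with the maximal-split-subgroup theorem to obtain a split $F(k) \leq G(k)$ containing $T_d(k)$. Inside each rank-2 Levi of $F(k)$, which is a split reductive $k$-group defined over $\ZZ$, I extract a $\QQ$-form and, from the associated $\SL_2(\QQ)$'s, build diagonal subgroups $D_\alpha$ whose join $S(\QQ) := \langle D_\alpha : \alpha \in \Pi\rangle$ is the candidate. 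Weyl-group invariance of $S(\QQ)$ reduces regularity (via the group-theoretic characterization) to checking that no character $2\alpha$ vanishes on $S(\QQ)$, which is immediate since $D_{\alpha_i} \subseteq S(\QQ)$.

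Next I verify that $\varphi(S(\QQ))$ is again regular diagonalizable. The non-centralization characterization (no $\SL_2(\QQ)$ or $\PSL_2(\QQ)$ centralized) is preserved automatically by any abstract isomorphism, so only diagonalizability requires work. Boundedness of $S(\QQ)$, via its generation by the $D_\alpha$, lets me pass to a balanced fixed set $\Omega$ and consider $\overline{\Ad_\Omega(\varphi(S(\QQ)))}$; rationality of each $\Ad_\Omega \circ \varphi \circ \psi_\alpha$ (an abstract representation of $\SL_2(\QQ)$ into an algebraic group is automatically rational) shows this Zariski closure consists of semisimple elements, hence is a torus, which is $k'$-split by induction on dimension using standard results on split subtori. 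After postcomposing with an inner automorphism of $G'(k')$ I may assume $\varphi(S(\QQ)) \leq T_d'(k')$ and hence $\varphi$ identifies $\mA$ with $\mA'$. Taking $S_1 \leq T_d(k)$ to be the kernel of the $T_d(k)$-action (via $\varphi$) on the fixed-point set $\mA'$ of $\varphi(S(\QQ))$ yields $\varphi(S_1) \leq Z'(k')$; the quotient $\varphi(T_d(k))/\varphi(S_1)$ embeds as an abelian subgroup of $W'$, hence is finitely generated, making $S_1$ Zariski-dense in $T_d$. A symmetric construction produces $S_2$ on the primed side.

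The heart of the argument is to pin down $\varphi(U_\alpha)$ inside a single root group. For pure $u \in U_\alpha$, the Jacobson--Morozov-type lemma produces an $\SL_2(\QQ) \hookrightarrow G(k)$ passing through $u$ and normalized by $S(\QQ)$; its $\varphi$-image fixes two opposite points of $\mA'$, placing $\varphi(u) \in L^J$ for spherical $J$. A Lie--Kolchin argument on the solvable group $S(\QQ) \cdot \QQ u$, in whose derived subgroup $u$ lies up to finite index, makes $\varphi(u)$ unipotent; the Borel--Tits positivity lemma then places it in the positive unipotent radical of a Borel of $L^J$. Writing $\varphi(u) = v_{\beta_1}\cdots v_{\beta_r}$ along a nibbling sequence, I would rule out $r>1$ by torus separation: commutators with suitable $t_{ij} \in S_2$ produce $u_i,u_j \in U_\alpha$ with $\varphi(u_i) \in V_{\beta_i}$ and $\varphi(u_j) \in V_{\beta_j}$, and then an infinite-order $s \in S_2$ centralizing $V_{\beta_i}$ but not $V_{\beta_j}$ gives $\varphi^{-1}(s^2)$ centralizing $u_i$ but not $u_j$ while still normalizing $U_\alpha$ --- impossible. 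The ``subgroup inside a union of root groups'' proposition then upgrades the elementwise conclusion to $\varphi(U_\alpha) \leq V_{\beta(\alpha)}$ in the abelian case. For metabelian $U_\alpha$, I first treat $\mathscr Z(U_\alpha)$; then, picking pure $u_i$ whose logarithms span a complement in $\lieg_\alpha$, if distinct roots $\gamma_i, \gamma_j$ appeared among the $\varphi(\QQ u_i)$, either $s_{\gamma_i}s_{\gamma_j}$ has finite order (contradicting rank-2 Levi torus separation, as before) or infinite order, in which case iterated conjugation by a lift of this element places $\varphi(U_\alpha)$ into infinitely many distinct root groups, contradicting its boundedness.

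Applying the same analysis to $\varphi^{-1}$ and using $U_\alpha \cap U_\beta = 1$ for $\alpha \neq \beta$, the inclusions $\varphi(U_\alpha) \leq V_{\beta(\alpha)}$ and $\varphi^{-1}(V_\beta) \leq U_{\alpha(\beta)}$ force $\alpha \mapsto \beta(\alpha)$ to be a bijection $\Phi \to \Phi'$ with equality throughout, which is exactly the definition of a standard isomorphism. The main obstacle I anticipate is the rigidification step in paragraph three: turning ``$\varphi(u)$ lies in a spherical Levi'' into ``$\varphi(u)$ lies in a single root group'' despite $U_\alpha$ not carrying any a priori $k'$-rational structure on the image side. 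This forces the simultaneous use of boundedness, Zariski density of $S_1, S_2$, and torus separation within rank-2 Levis, and the metabelian case additionally demands a boundedness argument on the full support of $\varphi(U_\alpha)$.
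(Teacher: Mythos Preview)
Your proposal is correct and follows essentially the same route as the paper: the construction of $S(\QQ)$ inside a maximal split subgroup, the alignment of apartments and extraction of the Zariski-dense subgroups $S_1,S_2$, the Lie--Kolchin/nibbling-sequence/torus-separation analysis for pure elements, the metabelian boundedness argument, and the final symmetry step all match the paper's Section~5.2. The only point where the paper is slightly more explicit is the metabelian case, where it first proves that for pure $u$ the elements $u',u''$ in $m(u)=u'uu''$ satisfy $\varphi(u'),\varphi(u'')\in V_{-\beta}$, which is what guarantees that the lift of $s_{\gamma_i}s_{\gamma_j}$ pulls back to an element normalizing $U_\alpha$.
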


\begin{proof} By the previous discussion, there exists $x \in G'(k')$ such that $\varphi':=\intg x \circ \varphi$ induces a bijection of the root groups. Note that $\intg x$ can be chosen to be trivial if, with the notation from above, $\varphi(S(\QQ))$ already fixes $\mA'.$
Since $$Z(k)=\bigcap_{\alpha \in \Phi(W,S)} N_{G(k)}(U_\alpha), \; Z'(k')=\bigcap_{\beta \in \Phi(W',S')} N_{G'(k')}(V_\beta)$$ (there is actually an equality, not just an inclusion, see \cite[Proposition 1.5.3]{Remy}), it follows that $\varphi'(Z(k))=Z'(k').$
\end{proof}

This proves Theorem 1.1 from the Introduction.


\begin{remark}
Since clearly $\mathscr Z(U_\alpha)$ is mapped to $\mathscr Z(V_\beta)$, it also follows that $\varphi$ induces an automorphism of the refined root datum as given by R\'emy \cite[Theorem 12.6.3]{Remy}.
\end{remark}


\begin{corollary}
Let $k,k'$ be two fields of characteristic 0 and let $G(k),G'(k')$ be two 2-spherical almost split Kac--Moody groups. Let $\varphi\colon G(k) \to G'(k')$ be an isomorphism rectified in such a fashion that $\varphi$ maps root groups with respect to $T_d(k)$ to root groups with respect to $T_d'(k').$ \\
Let $X_\alpha(k):=Z(k)\langle U_\alpha, U_{-\alpha}\rangle$ and 
$Y_\beta(k'):=Z'(k')\langle V_\beta,V_{-\beta}\rangle$ be two rank 1 groups such that $\varphi(X_\alpha)=Y_\beta.$

Suppose that the derived groups of both $X_\alpha$ and $Y_\beta$ are absolutely almost simple and that either $X_\alpha'$ is simply connected or that $Y_\beta'$ is adjoint. Then there is a field isomorphism $\sigma\colon k \to k'$, a rational map $r\colon X_\alpha' \to Y_\beta'$ and a map $c\colon X_\alpha'(k) \to \mathscr Z(Y_\beta'(k'))$ such that for $x \in X_\alpha'$, $\varphi(x)=c(x) \cdot (r \circ \sigma(x)).$
\end{corollary}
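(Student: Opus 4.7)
The plan is to reduce the statement to the classical Borel--Tits theorem \cite{BorelTits} on abstract homomorphisms of absolutely almost simple isotropic algebraic groups. The work done in Theorem \ref{maintheorem} guarantees that $\varphi$ preserves root groups and maps the anisotropic kernel of $G(k)$ onto that of $G'(k')$; in particular, it maps the rank 1 group $X_\alpha$ bijectively onto $Y_\beta$. The task is thus to upgrade this abstract group isomorphism between specific subgroups into a morphism of algebraic groups up to a field isomorphism and a central cocycle.

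First I would identify the algebraic structure of the source and target. By Proposition \ref{TorusActsOnRootGroups}, both $X_\alpha$ and $Y_\beta$ are the $k$- and $k'$-rational points of connected reductive algebraic groups of split-semisimple rank 1, which we denote again by $X_\alpha$ and $Y_\beta$. The derived groups $X_\alpha'$ and $Y_\beta'$ are therefore semisimple of split-semisimple rank $1$, hence $k$- respectively $k'$-isotropic, and by assumption absolutely almost simple. Since the derived subgroup is a characteristic subgroup of any abstract group, the restriction $\varphi|_{X_\alpha(k)}$ descends to an abstract group isomorphism $\varphi' \colon X_\alpha'(k) \to Y_\beta'(k')$. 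Both fields $k$, $k'$ are infinite of characteristic $0$, so all hypotheses needed to invoke the main theorem of Borel--Tits are in place.

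Second I would apply the Borel--Tits theorem \cite[Th\'eor\`eme (A)]{BorelTits}: for an abstract homomorphism between the $k$-rational points of connected absolutely almost simple isotropic algebraic groups, provided either the source is simply connected or the target is adjoint, there exist a field homomorphism $\sigma\colon k \to k'$, a $k'$-rational morphism of algebraic groups $r\colon X_\alpha'^{\,\sigma} \to Y_\beta'$ (i.e.\ the $\sigma$-twist of $X_\alpha'$), and a map $c\colon X_\alpha'(k) \to \mathscr Z(Y_\beta'(k'))$ with $\varphi'(x) = c(x)\cdot r(\sigma(x))$ for all $x \in X_\alpha'(k)$. Since $\varphi'$ is an isomorphism and both groups have the same $k$- respectively $k'$-rank and the same (absolute) type, a dimension count forces $\sigma$ to be a field isomorphism and $r$ to be an isomorphism of algebraic groups, as required.

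The hypothesis ``$X_\alpha'$ simply connected or $Y_\beta'$ adjoint'' is essential exactly at the invocation of Borel--Tits; without it the conclusion can fail because intermediate isogenies may not lift to an abstract homomorphism of rational points. The main subtlety, and hence the step requiring most care, will be bookkeeping the central cocycle $c$ and verifying that the standard Borel--Tits statement, which is usually phrased for homomorphisms with Zariski-dense image, applies in the present situation; however, density is automatic here because $\varphi'$ is surjective onto $Y_\beta'(k')$, which is Zariski-dense in $Y_\beta'$ by the infiniteness of $k'$ combined with the isotropic, absolutely almost simple structure of $Y_\beta'$ (Lemma \ref{FGSubgroupDense} and \cite[Corollary 18.3]{BorelTitsRG}).
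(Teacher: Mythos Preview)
Your proposal is correct and follows exactly the paper's approach: the paper's proof is a single line observing that the hypotheses are tailored to match those of Borel--Tits's Theorem~A \cite{BorelTits}, from which the conclusion is immediate. Your write-up supplies the verification the paper leaves implicit; one small caution is that ``the derived subgroup is characteristic'' identifies the \emph{abstract} commutator subgroups, not a priori the $k$-points of the algebraic derived groups, but since $\varphi$ already maps root groups to root groups you get $\varphi(\langle U_\alpha,U_{-\alpha}\rangle)=\langle V_\beta,V_{-\beta}\rangle$ directly, which is what Borel--Tits actually requires.
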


\begin{proof}
The assumption are made as to conform to the assumptions of Borel--Tits's classical theorem \cite[Theorem A]{BorelTits}, from which the claim follows.
\end{proof}

\bibliographystyle{plain}
\bibliography{LiteraturverzeichnisNeu}

\end{document}